\begin{document}

\title{Constellations and their relationship with categories}
\author{Victoria Gould and Tim Stokes}

\date{}
\maketitle

\newcommand{\bea}{\begin{eqnarray*}}
\newcommand{\eea}{\end{eqnarray*}}

\newcommand{\ben}{\begin{enumerate}}
\newcommand{\een}{\end{enumerate}}

\newcommand{\bi}{\begin{itemize}}
\newcommand{\ei}{\end{itemize}}

\newenvironment{proof}{\noindent \textbf{Proof.}\hspace{.7em}}
                   {\hfill $\Box$
                    \vspace{10pt}}

\newcommand{\fqo}{\Subset}
\newcommand{\mc}{\mathcal}
\newcommand{\F}{\mathcal F}
\newcommand{\Z}{\mathbb Z}
\newcommand{\K}{\mathbb K}
\newcommand{\R}{\mathbb R}
\newcommand{\G}{\mathcal G}
\newcommand{\Q}{\mathcal Q}
\newcommand{\C}{\mathcal C}
\newcommand{\I}{\mathcal I}
\newcommand{\ro}{\overline{\mathcal R}}
\newcommand{\Pc}{{\bf P}}
\newcommand{\T}{{\bf T}}
\newcommand{\pr}{^{\prime}}

\newtheorem{thm}{Theorem}[section]
\newtheorem{theorem}[thm]{Theorem}
\newtheorem{lem}[thm]{Lemma}
\newtheorem{pro}[thm]{Proposition}
\newtheorem{proposition}[thm]{Proposition}
\newtheorem{cor}[thm]{Corollary}
\newtheorem{corollary}[thm]{Corollary}
\newtheorem{eg}[thm]{Example}
\newtheorem{conj}{Conjecture}
\newtheorem{dfn}[thm]{Definition}

\newcommand{\fix}{\textsf{Fix}}

%\title{}
%\author[V. Gould]
%{Victoria Gould}
%\email{victoria.gould@york.ac.uk}
%\address{Department of Mathematics\\
%University of York\\
%Heslington\\
%York YO10 5DD\\
%United Kingdom.}

%\author[T. Stokes]
%{Tim Stokes}
%\email{stokes@waikato.ac.nz}
%\address{Department of Mathematics\\
%University of Waikato\\
%Hamilton\\
%New Zealand.}

%\subjclass[2010]{Primary: 08A02; %laws of composition... 
%Secondary: 08A55, 18D99.} % partial algebras, categories with additional structure

%Keywords: constellation, category, partial algebra.

\begin{abstract}
Constellations are partial algebras that are one-sided generalisations of categories.  It has previously been shown that the category of inductive constellations is isomorphic to the category of left restriction semigroups.  Here we consider constellations in full generality, giving many examples.  We characterise those small constellations that are isomorphic to constellations of partial functions.  We examine in detail the relationship between constellations and categories, showing the latter to be special cases of the former.  In particular, we characterise those constellations that arise as (sub-)reducts of categories, and show that categories are nothing but two-sided constellations.  We demonstrate that the notion of substructure can be captured within constellations but not within categories.  We show that every constellation $P$ gives rise to a category $\C(P)$, its canonical extension, in a simplest possible way, and that $P$ is a quotient of $\C(P)$ in a natural sense.  We also show that many of the most common concrete categories may be constructed from simpler quotient constellations using this construction.  We characterise the canonical congruences $\delta$ on a given category $K$ (those for which $K\cong \C(K/\delta)$), and show that the category of constellations is equivalent to the category of categories equipped with distinguished canonical congruence.  
\end{abstract}

\maketitle

\section{Introduction}

The ESN Theorem establishes a correspondence between inverse semigroups and certain types of ordered categories called inductive groupoids. In \cite{lawson1}, this setting is broadened in order to establish a correspondence between what in modern terminology are called two-sided restriction semigroups and inductive categories (in fact something more general than this is done).  One-sided restriction semigroups are also of interest, but the lack of two unary operations makes a correspondence with any kind of category impossible, or at least unnatural.  So in \cite{constell}, a one-sided version of a category called a constellation, and, correspondingly, a one-sided version of an inductive category called an inductive constellation, were introduced; the latter were shown to correspond exactly to one-sided (left) restriction semigroups. 

The second section of \cite{constell} briefly concerns itself with general constellations, but the main focus there is on inductive constellations. However, general constellations have interest for their own sake, being one-sided generalisations of categories.  Here we examine general constellations in detail, and especially their relationship with categories.  We begin with some motivation.

Constellations arise very naturally when one considers the possible ways of making sense of function composition.  We keep things simple for the moment by considering only functions amongst the subsets of some fixed set $X$.  For any such function $f:Y\rightarrow Z$ where $Y,Z$ are subsets of $X$, we say 
\bi
\item $Y$ is the {\em domain} of $f$, Dom$(f)$, and it is assumed that $yf$ exists for all $y\in Y$;
\item $Z$ is the {\em codomain} of $f$, Cod$(f)$, and it is assumed that $yf\in Z$ for all $y\in Y$ for which $yf$ exists;
\item Im$(f)=\{xf\mid x\in Y\}$ is the {\em image} of $f$, a subset  of $Z$.
\ei 
Note that we adopt the convention of writing the function on the right of the element it acts on (so $xf$ rather than $f(x)$), since this fits with our adoption of the convention that composition of functions is to be read left-to-right (so that $fg$ is ``first $f$, then $g$").

We distinguish ``functions amongst subsets of $X$" from ``partial functions on $X$".  The latter are functional binary relations (sets of ordered pairs) on $X$, and have no {\em pre-specified} domain or codomain.  However, they do have well-defined domains and images: for such an $f$,
\bi
\item Dom$(f)=\{x\in X\mid (x,y)\in f\, \mbox{ for some }y\in X\}$;
\item Im$(f)=\{y\in X\mid (x,y)\in f\, \mbox{ for some }x\in X\}$.
\ei

There are three possible approaches to defining composites, as well as domain and range operations, in part depending on which of these viewpoints we adopt.
\begin{enumerate}

\item  In the (small) category of subsets of the set $X$, the arrows are the functions $f:Y\rightarrow Z$ where $Y,Z$ are subsets of $X$, with $Y$ the domain of $f$ and $f$ mapping into its codomain $Z$.  Composition of two arrows is defined if and only if the codomain of the first equals the domain of the second.  There are unary domain and range operations which correspond to restricting the identity map to the domain and codomain respectively of a given function.  The result is a category we call $COD_X$, the category of {\em cod-functions} on the non-empty set $X$.

\item  At the other extreme, the partial functions on $X$ may be made into a semigroup under composition (a subsemigroup of the semigroup of binary relations on $X$ under composition), so that {\em all} compositions $fg$ are defined (even if the result is the empty set).  There are again domain and range operations defined in terms of domains and images of the partial functions (codomains now not being defined), although of course the result is not a category (and domain and range operations do not behave symmetrically due to the asymmetric nature of partial functions).  The result is a semigroup equipped with two unary operations that we call ${\mc P}_X$.

\item  There is a third way, intermediate between those just discussed, although it uses the partial function approach of (2) rather than the function with domain and codomain approach of (1).  One may require that the composite $f\cdot g$ of two partial functions be defined if and only if the image of $f$ is a {\em subset} of the domain of $g$.  This is the {\em constellation product}.  There is a natural domain operation, as in the two previous cases.   In this way, we obtain a partial algebra we call $\C_X$, having one partial binary operation and one unary operation; this is one of the main examples of a constellation given in \cite{constell}. 

\end{enumerate}

(We remark in passing that there is in fact a fourth option that we do not consider here, in which one defines the composite of two cod-functions if and only if the codomain of the first is a subset of the domain of the second.  Such a definition gives rise to a type of structure different to the constellations considered here.)

For approach (3), there is also a range operation as in (2) if one wishes.  However, its role is not vital in the way that it is in category theory.  Besides, in a number of settings related to the above, the very notion of a range operation is itself problematic.  For example, one may consider the partial functions on an infinite set that have infinite domains.  Approach (1) can make sense, providing one requires the specification of a (rather arbitrary) infinite codomain containing the image of a given possibly finite-image partial function.  Approach (2) does not work at all since this set of partial functions is not closed under either the semigroup composition or the range operation.   Approach (3) works and seems the most natural: the partial functions in question are closed under the constellation product and the domain operation, but not any obvious range operation.

For a class $\C$ of structures in which there is a notion of structure-preserving mapping, one usually obtains a concrete category in which the objects are members of $\C$ and the arrows are structure-preserving mappings between members of $\C$ (basically, one needs the composite of two structure-preserving mappings to be structure-preserving). Each arrow has associated with it two objects, the source and target of the mapping: the source is the domain and the target contains its image. Then the composite of two arrows is defined if and only if the target of the first equals the source of the second. This is along the lines of approach (1) above.   One can also obtain a constellation structure analogous to approach (3), in which composition of structure-preserving maps is defined whenever the image of the first mapping is contained in the domain of the second.  (On the other hand, approach (2) is difficult to make sense of in general, and may only have a limited analog: for example if one considers an algebra $A$ together with all homomorphisms of its subalebras into it, one obtains a submonoid of the semigroup of partial mappings on $A$.) 

In what follows we shall see that in many concrete settings, the natural category structure can be obtained from the equally natural (yet simpler, being a quotient of the category) constellation structure. Constellations are expressive enough to capture the notion of substructure where it makes sense, even when the corresponding categories cannot.  On the other hand, every category is shown to be a constellation.  So, somewhat paradoxically, constellations are simultaneously more general, more fundamental, and yet more expressive than categories!  
   
In Section 2 of this article, we review the definition and basic properties of constellations as presented in \cite{constell}.  A number of examples of constellations are given, parallelling the familiar examples of concrete categories.  We then present an equivalent definition of constellations that makes clear the fact that constellations generalise categories; indeed categories are shown to be nothing but ``two-sided" constellations.  Those constellations arising from or embeddable in categories are characterised.  A Cayley-style theorem is given for small constellations (constellations whose underlying classes are sets) satisfying a simple property we call normality.  The relevant notions of subalgebra, homomorphism (radiant) and quotient are considered.  

The third section concentrates on the notion of the canonical extension of a constellation $P$ to a category $\C(P)$, which satisfies a suitable universal property.  For constellations $P$ satisfying a natural ``composability" property (satisfied by the main examples), $P$ is shown to be a quotient of $\C(P)$.  It is shown that many of the most familiar concrete categories of mathematics arise as canonical extensions of corresponding (simpler) constellations.  It is shown that the notion of substructure cannot be expressed in the language of categories, but can in the language of constellations.  Those congruences $\delta$ on a category $K$ that give rise to a constellation $P=K/\delta$ for which $K\cong \C(P)$ are described, the so-called canonical congruences (a notion definable for constellations in general), and maximal such congruences are shown to always exist and to give rise to simple (in the relevant sense) quotients.  The category of constellations is shown to be equivalent to the category of categories equipped with distinguished canonical congruence.

We conclude with some open questions.

\section{The basics of constellations}

\subsection{Defining constellations}  \label{axeg}

We begin with some definitions, examples, and basic facts concerning constellations, to some extent reprising material in the early sections of \cite{constell}.
First, we recall the definition of a constellation, generalised to allow classes rather than sets.  

A {\em constellation} is a structure $P$ of signature $(\cdot\ ,D)$ consisting of a class $P$ with a partial binary operation $\cdot$ and unary operation $D$ (denoted $^{+}$ in \cite{constell}) that maps onto the set of {\em projections} $E\subseteq P$, so that $E=\{D(x)\mid x\in P\}$, and such that for all $e\in E$, $e\cdot e$ exists and equals $e$, and for which, for all $x,y,z\in P$:
\bi
\item[] (C1) if $x\cdot(y\cdot z)$ exists then so does $(x\cdot y)\cdot z$, and then the two are equal;
\item[] (C2) $x\cdot(y\cdot z)$ exists if and only if $x\cdot y$ and $y\cdot z$ exist;
\item[] (C3) for each $x\in P$, $D(x)$ is the unique left identity of $x$ in $E$ (i.e. it satisfies $D(x)\cdot x=x$);
\item[] (C4) for $a\in P$ and $g\in E$, if $a\cdot g$ exists then it equals $a$.
\ei
(Note that ``$D(x)$" is one situation in which we write the function on the left, but here $D$ is an operation name, so there should be no confusion.)
Because there is an asymmetry in the definition of constellations, we should properly call constellations as defined here {\em left} constellations, there being an obvious right-handed version that we return to later.  However, we will only rarely be interested in anything other than the left-handed versions, and so generally omit ``left" in what follows.
 
We shall say that the constellation $P$ is {\em small} if $P$ is a set.  Only small constellations were considered in \cite{constell}, although most of the results given in the second section there carry over to the more general case considered here, with only occasional minor changes in terminology.  

Our first observation is that the axioms for constellations may be simplified somewhat.

\begin{pro} \label{const2}
In the definition of a constellation, (C2) may be replaced by 
\bi
\item[] (Const2) if $x\cdot y$ and $y\cdot z$ exist then so does $x\cdot(y\cdot z)$.
\ei
\end{pro}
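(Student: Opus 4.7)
The plan is to observe that (Const2) is literally one direction of the biconditional (C2), so half of the equivalence is trivial: under (C1), (C2), (C3), (C4), the axiom (Const2) follows immediately as the easy implication of (C2). The real content is to show that, conversely, (Const2) together with (C1), (C3), (C4) already forces the reverse implication ``if $x\cdot(y\cdot z)$ exists then $x\cdot y$ and $y\cdot z$ both exist.''

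For this, I would start from the convention, implicit in the notation for a partial operation, that writing ``$a\cdot b$ exists'' presupposes that $a,b$ are already well-defined elements of $P$. Assume $x\cdot(y\cdot z)$ exists. First, the very fact that the right factor of this product is $y\cdot z$ forces $y\cdot z\in P$, so $y\cdot z$ exists. Second, apply (C1) to $x\cdot(y\cdot z)$ to conclude that $(x\cdot y)\cdot z$ exists; now read off from this expression that $x\cdot y$ must itself exist as an element of $P$. Combined with (Const2), this gives the full biconditional (C2). Neither (C3) nor (C4) is needed for this derivation.

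The main obstacle is essentially notational rather than mathematical: one has to be careful that the parsing conventions for partial operations are made explicit, since the argument reduces to ``(C1) propagates existence of a bracketed composite down to its internal factors.'' Once that convention is fixed, the proof is a one-line invocation of (C1) in each direction, so I would keep the write-up very short.
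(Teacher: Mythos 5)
Your argument is correct and is essentially the paper's own proof: one direction is the trivial half of the biconditional, and for the other you use (C1) to pass from the existence of $x\cdot(y\cdot z)$ to that of $(x\cdot y)\cdot z$, then read off the existence of the inner products $x\cdot y$ and $y\cdot z$. Your explicit remarks about the parsing convention for partial operations and the non-use of (C3), (C4) are harmless elaborations of the same one-line argument.
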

\begin{proof}
Clearly (C2) implies (Const2).  Conversely, if (Const2) holds, and $x\cdot(y\cdot z)$ exists then so does $(x\cdot y)\cdot z$ by (C1), whence so do both $x\cdot y$ and $y\cdot z$, so (C2) is satisfied.  
\end{proof}

A result that we will make frequent use of is the following, which is Lemma $2.3$ in \cite{constell}.

\begin{lem}  \label{2p3}
For $s,t$ elements of the constellation $P$, $s\cdot t$ exists if and only if $s\cdot D(t)$ exists, and $D(s\cdot t)=D(s)$.
\end{lem}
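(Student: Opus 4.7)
The plan is to prove both parts of the lemma using only (C1), (C2) (equivalently Const2 from Proposition \ref{const2}), and the fact from (C3) that $D(t)\cdot t=t$ for all $t$.

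For the equivalence ``$s\cdot t$ exists iff $s\cdot D(t)$ exists'', I would argue as follows. In the forward direction, assume $s\cdot t$ exists. Since $D(t)\cdot t=t$ by (C3), we may rewrite $s\cdot t$ as $s\cdot(D(t)\cdot t)$; this triple product exists, so by (C2) both factors $s\cdot D(t)$ and $D(t)\cdot t$ exist, giving in particular existence of $s\cdot D(t)$. Conversely, suppose $s\cdot D(t)$ exists. Then $s\cdot D(t)$ and $D(t)\cdot t$ both exist (the latter equals $t$), so by (Const2) the product $s\cdot(D(t)\cdot t)$ exists, which is just $s\cdot t$.

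For the identity $D(s\cdot t)=D(s)$, assume $s\cdot t$ exists. Since $D(s)\cdot s$ exists (equals $s$) and $s\cdot t$ exists, (Const2) yields existence of $D(s)\cdot(s\cdot t)$. Applying (C1) to this, $(D(s)\cdot s)\cdot t$ exists and equals $D(s)\cdot(s\cdot t)$; but $(D(s)\cdot s)\cdot t = s\cdot t$. Hence $D(s)\in E$ is a left identity of $s\cdot t$, and by the uniqueness clause in (C3) we conclude $D(s\cdot t)=D(s)$.

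The argument is essentially routine manipulation with the axioms; the only minor subtlety is keeping track of which direction of (C2) — existence of the triple from existence of the two factors (Const2) versus the reverse — is needed at each step, which is why it is convenient to invoke Proposition \ref{const2} and use both forms freely. No deeper obstacle is anticipated.
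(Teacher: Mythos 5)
Your proof is correct. The paper gives no proof of this lemma, simply citing it as Lemma 2.3 of \cite{constell}; your argument --- using (C2) in both directions on the factorisation $s\cdot t=s\cdot(D(t)\cdot t)$, and then (C1) together with the uniqueness clause of (C3) to identify $D(s\cdot t)$ with $D(s)$ --- is the standard derivation from the axioms and is sound at every step.
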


An important example of a constellation is ${\mc C}_X$ (outlined in (3) above), consisting of partial functions on the set $X$, in which $s\cdot t$ is the composite $s$ followed by $t$ provided Im$(s)\subseteq$ Dom$(t)$, and undefined otherwise, and $D(s)$ is the restriction of the identity map on $X$ to Dom$(s)$.  Another constellation comes from any quasiordered set $(Q,\leq)$: simply define $e=e\cdot f$ whenever $e\leq f$ in $(Q,\leq)$, and let $D(e)=e$ for all $e\in Q$.  These examples were introduced in \cite{constell}.  

A {\em global right identity} in a constellation $P$ is an element $e\in P$ such that $s\cdot e$ exists and equals $s$ for all $s\in P$; it follows that $D(e)=D(e)\cdot e=e$, so $e\in D(P)$.  Every monoid $(M,\ \cdot)$ is a constellation $(M,\ \cdot\ ,D)$ in which $D(a)=1$ for all $a\in M$, as is easily checked, and $1$ is a global right (indeed two-sided!) identity.  

Just as every semigroup may be enlarged to a monoid with the addition of a new identity element, so too every constellation may be enlarged to one in which there is a global right identity: the following may be shown by easy case analyses.

\begin{pro} \label{adjoin}
Let $P$ be a constellation with $1\not\in P$.  Then $P$ can be enlarged to a constellation $P^1=P\cup \{1\}$ with $1$ a global right identity, by setting $s\cdot 1=s$ for all $s\in P$, $1\cdot 1=1$, and letting $1\cdot s$ not be defined for any $s\in P$.
\end{pro}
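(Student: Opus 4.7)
The plan is to define the constellation structure on $P^1$ in the only way consistent with the stated partial multiplication, and then verify the axioms by case analysis on which of the arguments equal the new element $1$. Since $1$ must be a projection (it is a right identity, and the remark just before the proposition shows right identities lie in $D(P^1)$), I set $D(1)=1$ and leave $D$ unchanged on $P$. The set of projections of $P^1$ is then $E\cup\{1\}$, and idempotence of projections is immediate: $e\cdot e=e$ already holds in $P$, and $1\cdot 1=1$ by definition.

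For (C1), I would check that whenever $x\cdot(y\cdot z)$ is defined in $P^1$, the same holds of $(x\cdot y)\cdot z$, with equal value. If none of $x,y,z$ is $1$ this is (C1) in $P$. If $x=1$, then $x\cdot(y\cdot z)$ defined forces $y\cdot z=1$, so $y=z=1$, and both sides equal $1$. If $y=1$, then $1\cdot z$ defined forces $z=1$ and both sides equal $x$. If $z=1$, both sides equal $x\cdot y$. For (Const2), assume $x\cdot y$ and $y\cdot z$ both exist; again a four-way case split (either all in $P$, or exactly one of $x,y,z$ equals $1$) shows that $y\cdot z$ always lies in the domain of $x\cdot(-)$, using the rule that $1\cdot w$ is defined only when $w=1$. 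By Proposition~\ref{const2}, (C1) and (Const2) together give (C2).

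For (C3), for $x\in P$ we have $D(x)\cdot x=x$ by (C3) in $P$, and the only candidate from $E\cup\{1\}$ that could also serve as a left identity is $1$; but $1\cdot x$ is undefined by construction, so uniqueness is preserved. For $x=1$, $D(1)\cdot 1=1\cdot 1=1$, and if $e\in E\cup\{1\}$ satisfies $e\cdot 1=1$ then (since $e\cdot 1=e$) we get $e=1$, so $D(1)=1$ is unique. Finally (C4): if $a\cdot g$ exists with $g$ a projection, then either both lie in $P$ and we are done by (C4) in $P$, or $g=1$ giving $a\cdot 1=a$ directly, or $a=1$ forcing $g=1$ and $1\cdot 1=1$; the remaining case $a=1,\ g\in E$ is vacuous since $1\cdot g$ is undefined.

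There is no real obstacle here beyond bookkeeping; the only point requiring a moment's care is the uniqueness clause of (C3), which relies on the crucial decision to leave $1\cdot s$ undefined for every $s\in P$, so that $1$ cannot masquerade as a left identity for anything in $P$.
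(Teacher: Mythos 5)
Your case analysis is correct and is exactly the argument the paper has in mind: the paper states only that the result "may be shown by easy case analyses" and gives no further detail, so your write-up simply supplies those cases (including the key observation that leaving $1\cdot s$ undefined for $s\in P$ preserves uniqueness in (C3)). The only cosmetic quibble is that in the (Const2) check the split should read "at least one of $x,y,z$ equals $1$" rather than "exactly one", since e.g.\ $y=z=1$ with $x\in P$ can occur, but that case is handled by the same trivial computation.
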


If $P$ is a constellation, a property an element $s\in P$ may have is that of being {\em composable}: there exists $t\in P$ such that $s\cdot t$ exists.  An element that is not composable is called {\em incomposable}.  If every element of a constellation $P$ is composable, we say $P$ is {\em composable}.  Note that $\C_X$ is composable, as is any monoid or quasiordered set viewed as a constellation.

The class of constellations furnishes the objects in a category whose arrows are {\em radiants}, defined in \cite{constell} to be mappings $\rho:P\rightarrow Q$ (where $P,Q$ are constellations) for which, for all $s,t\in P$, 
\bi
\item if $s\cdot t$ exists in $P$ then $s\rho\cdot t\rho$ exists in $Q$, and then $s\rho\cdot t\rho=(s\cdot t)\rho$, and
\item $D(s)\rho=D(s\rho)$. 
\ei
(In fact a radiant is nothing but a homomorphism of partial algebras as in \cite{gratzer} applied to the case of constellations.)  We call this category {\em the category of constellations}.  

As in \cite{constell}, we say a radiant $\rho:P\rightarrow Q$ is {\em strong} if, whenever $s\rho\cdot t\rho$ exists in $Q$, then so too does $s\cdot t$ exist in $P$.  It is an {\em embedding} if it is strong and injective, and an {\em isomorphism} if it is a surjective embedding.  These definitions are all consistent with standard terminology for the theory of partial algebras as in \cite{gratzer} for example.

\subsection{Examples}  \label{examples}

All of the examples to follow are ``concrete" in the sense that they are set-based, and the elements are certain types of mappings amongst them (generally structure-preserving in some sense).

\begin{eg} The constellation of sets. \label{eg:CSET}
\end{eg}
Let $S$ be the class of sets.  Of course there is a familiar category structure $SET$ associated with $S$, consisting of sets as the objects and maps between them as the arrows; equivalently, taking the ``arrow only'' point of view, the category consists of all possible maps between all possible sets, with the  operations $D$ and $R$ given by specifying $D(f)$ to be the identity map on the domain of $f$ and $R(f)$ the identity map on its codomain, with the partial operation of category composition defined  if and only if domains and codomains coincide.  

We define a constellation $CSET$ from $S$ by analogy with $\C_X$ as above, by taking the elements to be the {\em surjective} functions, with $D$ defined as in the category, but with composition of functions $f\cdot g$ defined if and only if Im$(f)\subseteq$ Dom$(g)$.  The proof that this gives a constellation is very much like the proof that $\C_X$ is a constellation.  It is also composable, like $\C_X$, and its quotient by the largest projection-separating congruence is the same as for $SET$.

Note that the constellation of sets has rather fewer elements than the category of sets, but more products exist amongst these fewer elements.  We return to the details of the relationship between the category and the constellation in this and other cases in the next section.

\begin{eg} The constellation of groups. \label{eg:CGRP} \end{eg}
Let $\G$ be the class of groups.  Of course there is a familiar category structure $GRP$ associated with $\G$: the arrows are all possible homomorphisms between all possible groups, with the partial operation of category composition as well as $D$ and $R$ defined as in $SET$.

We define a composable constellation structure $CGRP$ from $\G$ as follows.  The elements are the surjective homomorphisms between groups, and again composition and $D$ are as in the constellation of sets $CSET$.  

This example generalises widely, for example to any class of algebras of the same type, such as rings, modules, semigroups and so on.

\begin{eg} The constellation of topological spaces.  \label{eg:CTOP} \end{eg}
The category $TOP$ of topological spaces, in which the objects are topological spaces and the arrows are continuous functions between them, likewise has a composable constellation cousin $CTOP$, consisting of the surjective continuous functions between topological spaces, equipped with the obvious domain and composition operations.  The idea generalises to other non-algebraic settings in which the mappings are structure-preserving in some suitable sense (for example, partially ordered sets equipped with surjective order-preserving maps): in each case, there are both category and constellation structures.

\begin{eg} The constellation of partial maps with infinite domain. \label{eg:CXinf}\end{eg}  
Suppose $X$ is an infinite set, and denote by $\C^{\infty}_X$ the set of all partial maps in $\C_X$ that have infinite domains.  Obviously, $\C^{\infty}_X$ is a composable subconstellation of $\C_X$.  Note that no natural type of range operation based on image is available in this example, since images of elements of $\C^{\infty}_X$ can be finite.  A related category would consist of all cod-functions between subsets of $X$ having both infinite domain and codomain.  

We may generalise by replacing ``infinite" by ``large with respect to some bornology on $X$", and we can even admit additional structure, for example algebraic.  Generalising in a different way, we may consider the category $SET^{\infty}$ consisting of all infinite sets together with all mappings between them (noting that images of such mappings need not be infinite), and $CSET^{\infty}$ consisting of all surjective maps with infinite domain equipped with domain and constellation product (under which they are closed).  Similarly, we can consider the category $GROUP^{\infty}$ consisting of all infinite groups with homomorphisms between them and the associated constellation $CGROUP^{\infty}$ consisting of surjective group homomorphisms having infinite domain.

\subsection{Constellations generalise categories}  \label{congen}

We can make more precise the notion that constellations are one-sided generalisations of categories. 

Let $C$ be a class with a partial binary operation.  Recall that $e\in C$ is a {\em right identity} if it is such that, for all $x\in C$, if $x\cdot e$ is defined then it equals $x$; left identities are defined dually.  An {\em identity} is both a left and right identity.  (Note we are not assuming that $e\cdot e$ exists in any of these cases.)

Following \cite{lawson1}, recall that a {\em category} is a class with a partial binary operation satisfying the following:
\bi
\item[] (Cat1) $x\cdot(y\cdot z)$ exists if and only if $(x\cdot y)\cdot z$ exists, and then the two are equal;
\item[] (Cat2) if $x\cdot y$ and $y\cdot z$ exist then so does $x\cdot(y\cdot z)$;
\item[] (Cat3) for each $x\in P$, there are identities $e,f$ such that $e\cdot x$ and $x\cdot f$ exist.
\ei
Note that in \cite{lawson1}, (Cat2) is given in the form 
$$\mbox{if $x\cdot y$ and $y\cdot z$ exist then so does $(x\cdot y)\cdot z$},$$
which is obviously equivalent to the version we give in the presence of (Cat1).

The identities $e,f$ in (Cat3) are easily seen to be unique: for if $e,e\pr$ are identities with $e\cdot x=x$ and $e\pr\cdot x=x$, then $e\cdot (e\pr\cdot x)$ exists, hence so does $(e\cdot e\pr)\cdot x$, whence $e\cdot e\pr$ does, and then $e=e\cdot e\pr=e\pr$ since both are identities.  (Hence $e\cdot e$ exists and equals $e$.)  Similarly for $f$.  In general, we write $D(x)=e$ and $R(x)=f$.  It follows then that the collection of domain elements $D(x)$ (equivalently, range elements $R(x)$) is precisely the collection of identities in the category and does not need {\em a priori} specification: the domain and range operations are defined once uniqueness is shown (or else uniqueness can be redundantly assumed in (Cat3)). 

The following familiar properties hold in a category.

\begin{pro}
Let $x,y$ be elements of a category $C$.
\bi
\item The product $x\cdot y$ exists if and only if $R(x)=D(y)$.
\item If $x\cdot y$ exists then $D(x\cdot y)=D(x)$ and $R(x\cdot y)=R(y)$.
\ei
\end{pro}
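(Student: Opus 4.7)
The plan is to deduce both claims directly from axioms (Cat1)--(Cat3) together with the uniqueness of the identities $D(x)$ and $R(x)$, which the paper has just established. The core technique in both parts is to use (Cat2) to force a triple product to exist from the existence of two adjacent pairs, and then use (Cat1) to reassociate so as to extract existence of subproducts --- in particular, of products of two identities, which must then equal each other.

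For the forward direction of part (i), assume $x\cdot y$ exists, and write $f=R(x)$, $e=D(y)$. From $x\cdot f=x$ and $e\cdot y=y$, I would rewrite $x\cdot y$ as $(x\cdot f)\cdot y$, apply (Cat1) to get that $x\cdot(f\cdot y)$ exists, and hence that $f\cdot y$ exists. Combining with the existence of $e\cdot y$, (Cat2) now gives $f\cdot (e\cdot y)=f\cdot y$ exists, and reassociating again via (Cat1) yields existence of $f\cdot e$. Since $f$ is a left identity and $e$ is a right identity, $f\cdot e$ equals both $f$ and $e$, so $f=e$ as required. For the converse, assume $R(x)=D(y)$; then by (Cat3), both $x\cdot R(x)$ and $D(y)\cdot y$ exist, so $x\cdot D(y)$ and $D(y)\cdot y$ exist, and (Cat2) delivers $x\cdot(D(y)\cdot y)=x\cdot y$.

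For part (ii), assume $x\cdot y$ exists. By (Cat3), $D(x)\cdot x$ exists and equals $x$, so (Cat2) applied to $D(x),x,y$ gives that $D(x)\cdot(x\cdot y)$ exists and equals $x\cdot y$ (since $D(x)$ is a left identity). This exhibits $D(x)$ as a left identity with $D(x)\cdot (x\cdot y)=x\cdot y$; by the uniqueness argument recorded in the preceding paragraph of the paper, $D(x\cdot y)=D(x)$. The equality $R(x\cdot y)=R(y)$ follows by the symmetric argument, using $y\cdot R(y)=y$, (Cat2) applied to $x,y,R(y)$, and uniqueness of the right identity.

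The only real care needed is in keeping track of which products have been shown to exist at each stage, since (Cat1) yields existence of a full triple product from one grouping given existence of the other, while (Cat2) provides existence of the triple product given the two adjacent pairs --- the two play different roles and it is easy to invoke the wrong one. There is no conceptual obstacle beyond this bookkeeping.
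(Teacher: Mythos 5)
The paper states this proposition without proof, labelling it a ``familiar property'', so there is no argument of the paper's to compare against; your derivation from (Cat1)--(Cat3) and the uniqueness of $D(x)$ and $R(x)$ is correct and is the standard one. One small correction to your bookkeeping: in the forward direction of (i), the step ``$f\cdot(e\cdot y)$ exists'' is not an application of (Cat2) --- that would require $f\cdot e$ to exist already, which is precisely what you are trying to establish --- it is simply the substitution of $e\cdot y=y$ into the already-proved existence of $f\cdot y$; the subsequent reassociation by (Cat1) to extract $f\cdot e$ then goes through as you wrote it. Everything else, including the appeal to (Cat1) implicit in the $R(x\cdot y)=R(y)$ half of (ii) to pass from $x\cdot(y\cdot R(y))$ to $(x\cdot y)\cdot R(y)$, is sound.
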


Constellations admit an alternative definition that is quite reminiscent of the above way of defining categories.

\begin{pro}  \label{simpler}
Suppose $C$ is a class with a partial binary operation satisfying the following:
\bi
\item[] (Const1) if $x\cdot(y\cdot z)$ exists then $(x\cdot y)\cdot z$ exists, and then the two are equal;
\item[] (Const2) if $x\cdot y$ and $y\cdot z$ exist then so does $x\cdot(y\cdot z)$;
\item[] (Const3) for each $x\in P$, there is a unique right identity $e$ such that $e\cdot x=x$.
\ei
Then $C$ is a constellation in which $D(x)=e$ as in (Const3), and $E$ is the class of right identities of $C$.

Conversely, if $C$ is a constellation  with class of projections $E$, then the class of right identities in $C$ is $E$ and the above three laws hold.
\end{pro}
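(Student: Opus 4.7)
The plan is to prove the two directions by reducing almost everything to a single key identification: the class $E$ of projections coincides with the class of right identities. Proposition~\ref{const2} already tells us that, in the presence of (C1)/(Const1), axioms (C2) and (Const2) are interchangeable, so I will treat (C1) $=$ (Const1) and (C2) $\Leftrightarrow$ (Const2) as established throughout and concentrate on (C3), (C4), and (Const3).

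For the forward direction, assume $C$ satisfies (Const1)--(Const3). Define $D(x)$ to be the unique right identity $e$ promised by (Const3), and set $E=\{D(x)\mid x\in C\}$; by construction each element of $E$ is a right identity of $C$, which immediately yields (C4). To finish (C3) I need $D(x)$ to be unique \emph{within} $E$, and I need $e\cdot e=e$ for each $e\in E$. Both follow once I show $E$ is exactly the class of right identities: if $f$ is any right identity, then applying (Const3) to $f$ itself gives $D(f)\cdot f=f$; but the left-hand side also exists and equals $D(f)$ since $f$ is a right identity, so $D(f)=f$, placing $f\in E$. Uniqueness in (C3) is then inherited from (Const3), and taking $x=e$ gives $e\cdot e = D(e)\cdot e = e$.

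For the converse, assume $C$ is a constellation with projections $E$. Axiom (C4) directly says every projection is a right identity, and the same short argument as above (using (C3) in place of (Const3)) shows every right identity lies in $E$: if $f$ is a right identity, $D(f)\cdot f=f$ by (C3) and equals $D(f)$ because $f$ is a right identity, so $f=D(f)\in E$. Now (Const1) is (C1), (Const2) follows from Proposition~\ref{const2}, and (Const3) is immediate: $D(x)$ is a right identity with $D(x)\cdot x=x$, and if $e$ is any right identity with $e\cdot x=x$ then $e\in E$ by the identification just proved, so $e=D(x)$ by the uniqueness in (C3).

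There is no real obstacle here; the only subtlety worth flagging is the mismatch in scope of the two uniqueness clauses—(C3) quantifies over $E$, while (Const3) quantifies over all right identities—which is precisely why the identification of $E$ with the right identities has to be established before the two versions of (C3)/(Const3) can be compared. This identification is what does all the work in both directions.
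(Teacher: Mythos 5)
Your proof is correct and follows essentially the same route as the paper's: both reduce (C1)/(C2) versus (Const1)/(Const2) to Proposition \ref{const2} and then hinge everything on the identity $e=D(e)\cdot e=D(e)$ to show that the projections are exactly the right identities. Your explicit remark about the differing scopes of the uniqueness clauses in (C3) and (Const3) is a point the paper leaves implicit, but it does not change the argument.
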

\begin{proof}
The proof of Proposition \ref{const2} shows that (Const1) and (Const2) are together equivalent to (C1) plus (C2) for binary partial algebras.  

Suppose $C$ satisfies the three new laws.  Now define $D(x)=e$ as in (Const3), and let $E=\{D(x)\mid x\in C\}$.  For $e\in E$, $e=D(e)\cdot e=D(e)$ as $e$ is a right identity, so in particular $e\cdot e=e$, so $D(e)=e\in E$ is idempotent.  We show $E$ is the class of all right identities.  Clearly it is a subclass of them by definition.  But if $e$ is a right identity then again $e=D(e)\cdot e=D(e)$, so $e\in E$.  (C3) and (C4) are now immediate.

Conversely, if $C$ is a constellation with $e$ a right identity, then $e=D(e)\cdot e=D(e)$, so $E$ consists of the class of all right identities in $C$, so (Const3) follows.
\end{proof}

In this new axiomatization for constellations, $D$ is not specified initially, and is determined by the class of all right identities (a class wholly determined by the partial binary operation), parallelling the definition of a category.  

The notion of radiant may be expressed in terms of the new alternative definition also.

\begin{pro}
The map $\rho:P\rightarrow Q$ (where $P,Q$ are constellations) is a radiant if and only if it satisfies, for all $s,t\in P$, 
\bi
\item if $s\cdot t$ exists in $P$ then $s\rho\cdot t\rho$ exists in $Q$, and then $(s\rho)\cdot(t\rho)=(s\cdot t)\rho$, and
\item $e\rho$ is a right identity in $Q$ for every right identity $e$ of $P$.
\ei
\end{pro}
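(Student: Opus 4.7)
The plan is to exploit Proposition \ref{simpler}, which identifies the class of projections $E$ in a constellation with the class of right identities, together with axiom (C3), which says projections are unique as left identities of a given element. Since both the old and new characterisations of radiant share the product-preservation clause, the task reduces to showing that (given product preservation) the condition $D(s)\rho=D(s\rho)$ for all $s\in P$ is equivalent to the condition that $e\rho$ is a right identity of $Q$ whenever $e$ is a right identity of $P$.

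For the forward direction, I would assume $\rho$ is a radiant and take any right identity $e$ of $P$. By Proposition \ref{simpler}, $e\in E$, so $e=D(e)$, whence $e\rho=D(e)\rho=D(e\rho)$. As $D(e\rho)$ is a projection in $Q$, it is (again by Proposition \ref{simpler}) a right identity of $Q$, so $e\rho$ is a right identity of $Q$.

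For the converse, suppose $\rho$ preserves products and sends right identities to right identities. Fix $s\in P$. Since $D(s)\cdot s=s$ exists in $P$, product preservation yields that $D(s)\rho\cdot s\rho$ exists in $Q$ and equals $s\rho$, so $D(s)\rho$ is a left identity of $s\rho$ in $Q$. On the other hand, $D(s)$ is a projection of $P$ hence (Proposition \ref{simpler}) a right identity of $P$, so by hypothesis $D(s)\rho$ is a right identity of $Q$, and therefore a projection of $Q$ by Proposition \ref{simpler} again. Axiom (C3) applied in $Q$ then gives uniqueness of the left identity of $s\rho$ lying in the set of projections, so $D(s)\rho=D(s\rho)$, as required.

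There is no real obstacle here: the proof is essentially a bookkeeping exercise using the identification between projections and right identities supplied by Proposition \ref{simpler}. The only point that needs a little care is to notice that, in the converse direction, we need two facts about $D(s)\rho$ simultaneously, namely that it is a left identity of $s\rho$ (from product preservation applied to $D(s)\cdot s$) and that it is a projection of $Q$ (from right-identity preservation combined with Proposition \ref{simpler}), so that uniqueness via (C3) may be invoked.
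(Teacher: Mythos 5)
Your proof is correct and follows essentially the same route as the paper's, using Proposition \ref{simpler} to identify projections with right identities in both constellations and then invoking (C3) for uniqueness. If anything, your converse direction is more complete than the paper's, which only explicitly checks $D(e)\rho=D(e\rho)$ for $e\in D(P)$ and leaves implicit the uniqueness step you spell out for a general element $s$.
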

\begin{proof}
Suppose the above conditions are satisfied.  The first is just the first condition in the definition of a radiant.  Now for $e\in D(P)$, $e\rho\in D(Q)$, so $D(e\rho)=e\rho=D(e)\rho$, so the second is also satisfied.  Conversely, if $P$ is a radiant, then the first condition above is satisfied, while if $e$ is a right identity in $P$ then $e=D(e)$ by Proposition \ref{simpler}, so $e\rho=D(e)\rho=D(e\rho)$ which is a right identity in $Q$, so the second condition is satisfied.
\end{proof}

Another advantage of the new formulation is that it makes clearer the fact that constellations generalise categories.

\begin{pro}  \label{rid}
In a category, every right identity is an identity.  Hence every category is a constellation.
\end{pro}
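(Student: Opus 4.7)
The plan is to handle the two sentences of the statement in sequence, with the first feeding directly into the second via Proposition \ref{simpler}.

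First I would show that every right identity $g$ in a category $C$ is in fact a two-sided identity. By (Cat3), there exists an identity $e$ (namely $D(g)$) for which $e\cdot g$ exists; indeed $e\cdot g=g$ because $e$ is a (two-sided) identity. On the other hand, since $g$ is a right identity and $e\cdot g$ exists, we also have $e\cdot g=e$. Comparing the two evaluations gives $e=g$, so $g$ coincides with the identity $e$ and is therefore itself an identity. This is the content of the first assertion.

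Next I would deduce that every category $C$ is a constellation by verifying the three laws (Const1), (Const2), (Const3) of Proposition \ref{simpler}. Axioms (Const1) and (Const2) are literally the same as (Cat1) and (Cat2), so they hold without further argument. For (Const3), fix $x\in C$; then (Cat3) produces an identity $e$ with $e\cdot x=x$, and $e$ is in particular a right identity, which supplies existence. For uniqueness, suppose $e'$ is any right identity of $C$ with $e'\cdot x=x$. By the first part of the proof, $e'$ is in fact an identity; and the uniqueness of the left identity of $x$ in a category (established in the paragraph just after the definition of a category in the excerpt) forces $e'=e$. Hence (Const3) holds, and by Proposition \ref{simpler} the category $C$, equipped with its given partial composition and the operation sending $x$ to this unique $e$, is a constellation.

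There is no real obstacle here: once the short one-line argument $e=e\cdot g=g$ is in place, the identification with the alternative axiomatization of Proposition \ref{simpler} is automatic. The only thing to be a little careful about is citing the uniqueness of the identity witnessing $e\cdot x=x$ within a category, which has already been recorded in the excerpt, so nothing new needs to be proved.
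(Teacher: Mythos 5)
Your proposal is correct and follows essentially the same route as the paper: the key step $e\cdot g=g$ (since $e=D(g)$ is a left identity) combined with $e\cdot g=e$ (since $g$ is a right identity) is exactly the paper's one-line computation $e=D(e)\cdot e=D(e)$, and the deduction that $C$ is a constellation then proceeds via Proposition \ref{simpler} just as in the paper. The only cosmetic remark is that (Const1) is one implication of (Cat1) rather than literally identical to it, but it is of course immediate from (Cat1).
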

\begin{proof}
Let $e$ be a right identity in the category $C$.  Then $e=D(e)\cdot e=D(e)$, so $e$ is an identity, and the class of identities equals the class of right identities.  So for $x\in C$, $D(x)$ is the unique identity, hence unique right identity, such that $D(x)\cdot x=x$.  
\end{proof} 

As noted earlier, there is an asymmetry in the axioms for constellations not present in the axioms for categories.  We gave the full title of {\em left constellation} to an object satisfying laws (C1)--(C4) above.  We call an object satisfying the following dual axioms a {\em right constellation}: it is a structure $P$ of signature $(\cdot\ ,R)$ consisting of a class $P$ with a partial binary operation $\cdot$ and unary operation $R$ that maps onto the set of {\em projections} $E\subseteq P$, so that $E=\{R(x)\mid x\in P\}$, and such that for all $e\in E$, $e\cdot e$ exists and equals $e$, and for which, for all $x,y,z\in P$:
\bi
\item[] (C1$\pr$) if $(x\cdot y)\cdot z$ exists then so does $x\cdot(y\cdot z)$, and then the two are equal;
\item[] (C2$\pr$) $(x\cdot y)\cdot z$ exists if and only if $x\cdot y$ and $y\cdot z$ exist;
\item[] (C3$\pr$) for each $x\in P$, $R(x)$ is the unique right identity of $x$ in $E$ (i.e. it satisfies $x\cdot R(x)=x$);
\item[] (C4$\pr$) for $a\in P$ and $g\in E$, if $g\cdot a$ exists then it equals $a$.
\ei

By the symmetry of the category axioms, there is an obvious variant of Proposition \ref{rid}, namely that in a category, every left identity is an identity, and so every category is a right constellation also.  There is also a dual version of Proposition \ref{simpler} involving right-handed versions (Const1$\pr$)--(Const3$\pr$) of (Const1)--(Const3)

If $(C,\ \cdot\ ,D,R)$ is a category, then $(C,\ \cdot\ ,D)$ is a left constellation, $(C,\ \cdot\ ,R)$ is a right constellation, and $D(P)=R(P)$.  Indeed there is an easy converse to this, showing that constellations can be viewed as ``one-sided categories". 

\begin{pro}
Let the class $P$ be equipped with a partial binary operation and two unary operations $D,R$ such that $(P,\ \cdot\ ,D)$ is a left constellation, $(P,\ \cdot\ ,R)$ is a right constellation and $D(P)=R(P)$.  Then $(P,\ \cdot\ ,D,R)$ is a category.
\end{pro}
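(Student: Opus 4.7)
The plan is to verify the three category axioms (Cat1), (Cat2), (Cat3) for $(P,\ \cdot\ )$ and then confirm that the given $D$ and $R$ coincide with the identity-valued operations a category forces. Axiom (Cat2) is immediate from (C2) of the left constellation (forward implication). Axiom (Cat1) needs both directions of ``$(x\cdot y)\cdot z$ exists $\Leftrightarrow$ $x\cdot(y\cdot z)$ exists''; one direction is (C1) of the left constellation, the other is (C1$\pr$) of the right constellation, and equality of the two triple products is already asserted by either of these axioms.

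The crux is (Cat3), and for this the main step is the following claim: every element of $E:=D(P)=R(P)$ is a (two-sided) identity with respect to the partial operation. Let $e\in E$. Since $e\in D(P)$, we have $e$ a projection of the left constellation, so (C4) tells us that whenever $a\cdot e$ exists it equals $a$; that is, $e$ is a right identity. Dually, since $e\in R(P)$, axiom (C4$\pr$) of the right constellation tells us that whenever $e\cdot a$ exists it equals $a$, so $e$ is also a left identity, hence an identity. Given $x\in P$, then, set $e=D(x)$ and $f=R(x)$: by (C3) the product $e\cdot x=x$ exists, and by (C3$\pr$) the product $x\cdot f=x$ exists, so (Cat3) is verified.

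Finally, as observed just before the statement, the uniqueness argument in a category shows that the identity $e$ with $e\cdot x=x$ existing is unique, and dually for the right side. Since our given $D(x)$ witnesses the first and our given $R(x)$ witnesses the second, the given unary operations coincide with those induced by the category structure. So $(P,\ \cdot\ ,D,R)$ is a category in the sense of the paper.

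The main obstacle is the ``two-sided identity'' step: a projection in a left constellation is only guaranteed to act as a right identity by (C4), and only as a left identity for those $x$ with $e=D(x)$ via (C3); the hypothesis $D(P)=R(P)$ is precisely what lets us bootstrap this one-sided information on both sides by invoking the dual axioms of the right constellation. Everything else is a direct appeal to the stated axioms.
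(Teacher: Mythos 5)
Your proof is correct and takes essentially the same route as the paper's: (Cat1) comes from combining the two one-sided associativity axioms, (Cat2) is the forward direction of (C2), and (Cat3) follows because $D(P)=R(P)$ forces every projection to be a two-sided identity. The only cosmetic difference is that you argue directly from (C4) and (C4$\pr$), where the paper instead cites its earlier characterisation of the projections as exactly the right identities (Proposition 2.8) together with its dual.
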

\begin{proof}
We refer to the axioms (Const1)--(Const3) for left constellations and their duals (Const1$\pr$)--(Const3$\pr$) for right constellations. Axioms (Const1) and (Const1$\pr$) together imply (Cat1), and (Cat2) is nothing but (Const2).  Since $D(P)=R(P)$, the left identities of $(P,\cdot)$ are precisely the right identities by Proposition \ref{simpler} and its dual, so (Cat3) obviously holds also, on letting $e=D(x)$ and $f=R(x)$.   
\end{proof}

\subsection{When a constellation arises from a category}

We say a constellation is {\em categorial} if it arises from a category as a reduct (obtained by dropping $R$).  

\begin{pro}  \label{catunique}
Let $P$ be a constellation.  Then $P$ is categorial if and only if, for all $s\in P$ there is a unique $e\in D(P)$ such that $s\cdot e$ exists, and then $R(s)=e$ when $P$ is viewed as a category.
\end{pro}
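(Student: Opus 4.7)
The plan is to establish the two directions separately, with the ``if'' direction being the substantive one. For the ``only if'' direction, suppose $P$ is a reduct of a category $(P,\cdot,D,R)$. By the earlier proposition characterising category composition, $s\cdot t$ exists exactly when $R(s)=D(t)$. Taking $e=R(s)\in D(P)=R(P)$, the product $s\cdot e$ exists. Conversely, if $e\in D(P)$ makes $s\cdot e$ exist, then $e$ is an identity of the category so $D(e)=e$, and hence $R(s)=D(e)=e$, giving both existence and uniqueness of the claimed $e$.

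For the ``if'' direction, define $R\colon P\to D(P)$ by letting $R(s)$ be the unique $e\in D(P)$ with $s\cdot e$ existing. The strategy is to verify that $(P,\cdot,D,R)$ meets the hypotheses of the earlier proposition combining a left and a right constellation into a category: I must show that $(P,\cdot,R)$ is a right constellation and that $D(P)=R(P)$. The equality $D(P)=R(P)$ is immediate, since $R(P)\subseteq D(P)$ by construction, and for $e\in D(P)$ the product $e\cdot e=e$ exists, forcing $R(e)=e$ by uniqueness. The central algebraic step is then to upgrade the uniqueness hypothesis, via Lemma \ref{2p3}, to the full characterisation that $s\cdot t$ exists if and only if $R(s)=D(t)$: if $s\cdot t$ exists then so does $s\cdot D(t)$ by Lemma \ref{2p3}, so uniqueness yields $D(t)=R(s)$, and the converse is analogous. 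Given this, axioms (C3$\pr$) and (C4$\pr$) follow directly---(C3$\pr$) by applying (C4) to $s\cdot R(s)$, and (C4$\pr$) because $g\cdot a$ existing with $g\in E$ forces $g=R(g)=D(a)$, whence $g\cdot a=D(a)\cdot a=a$ via (C3).

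The main obstacle will be verifying (C1$\pr$) and (C2$\pr$), both of which reduce to showing that $R(x\cdot y)=R(y)$ whenever $x\cdot y$ exists. I would establish this by noting that $y\cdot R(y)=y$, so $x\cdot(y\cdot R(y))=x\cdot y$ exists; then (C1) gives that $(x\cdot y)\cdot R(y)$ also exists (and equals $x\cdot y$), and uniqueness of the right identity in $D(P)$ forces $R(x\cdot y)=R(y)$. Once this identity is in hand, both (C1$\pr$) and (C2$\pr$) are routine consequences of the characterisation of product existence, completing the verification and hence the proof.
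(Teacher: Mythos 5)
Your proof is correct, but it takes a different route from the paper's. The paper verifies the category axioms (Cat1)--(Cat3) directly: it first shows that every $f\in D(P)$ acting as a left identity must equal $D(t)$ by the uniqueness hypothesis (giving (Cat3)), and then proves the hard direction of (Cat1) by showing that if $(s\cdot t)\cdot u$ exists then the unique $e$ with $t\cdot e$ existing must be $D(u)$, whence $t\cdot u$ exists and (Const2) finishes the job. You instead define $R$ explicitly, upgrade the hypothesis to the biconditional ``$s\cdot t$ exists iff $R(s)=D(t)$'' via Lemma \ref{2p3}, isolate the identity $R(x\cdot y)=R(y)$ as the key lemma, and then verify (C1$\pr$)--(C4$\pr$) so as to invoke the earlier proposition that a left constellation which is simultaneously a right constellation with $D(P)=R(P)$ is a category. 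The computational core is the same --- your derivation of $R(x\cdot y)=R(y)$ from (C1) and uniqueness is exactly the content of the paper's argument that $e=D(u)$ in its proof of (Cat1) --- but your decomposition buys a cleaner conceptual picture (the missing range operation is constructed and shown to satisfy the dual axioms), at the cost of routing through an extra proposition and checking four axioms rather than three. The paper's direct verification is marginally shorter and does not depend on the left-plus-right characterisation. One small point worth making explicit if you write this up: when invoking that characterisation you must also confirm that the projections of the putative right constellation are exactly $R(P)$ and are idempotent, which you do get from $R(P)=D(P)$ and $e\cdot e=e$ for $e\in D(P)$, but it deserves a sentence.
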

\begin{proof}
If $P$ is categorial then for each $s\in P$, $s\cdot R(s)$ exists and moreover $R(s)$ is the unique identity (hence unique right identity by Proposition \ref{rid}) for which $s\cdot e$ exists.

Conversely, suppose that for every $s\in P$ there is a unique $e\in D(P)$ such that $s\cdot e$ exists.  Now let $t\in P$ and $f\in D(P)$, with $f\cdot t$ existing.  Then $f\cdot D(t)$ exists by Lemma \ref{2p3}, so since $f\cdot f$ exists, we must have $f=D(t)$ by the uniqueness assumption.  So $f\cdot t=D(t)\cdot t=t$, and $f$ is also a left identity.  So $D(P)$ consists of the identities of $P$, and so for every $s\in P$ there are identities $e,f$ such that $e\cdot s$ and $s\cdot f$ both exist, establishing (Cat3).  (Cat2) is immediate.  It remains to prove (Cat1).

If $s,t,u\in P$ are such that $s\cdot(t\cdot u)$ exists, then $(s\cdot t)\cdot u$ exists also, by (Const1), and they are equal.  Conversely, suppose $s,t,u\in P$ are such that $(s\cdot t)\cdot u$ exists.  Then $(s\cdot t)\cdot D(u)$ exists by Lemma \ref{2p3}.  But there exists (unique) $e\in D(P)$ such that $t\cdot e$ exists, so $s\cdot t=s\cdot(t\cdot e)$ exists, so $(s\cdot t)\cdot e$ exists by (Const1).  So again by the uniqueness assumption, $e=D(u)$, and so $t\cdot D(u)$ exists, so $t\cdot u$ exists by Lemma \ref{2p3}.  But $s\cdot t$ exists, so by (Const2), $s\cdot(t\cdot u)$ exists and $s\cdot(t\cdot u)=(s\cdot t)\cdot u$ also.  So (Cat1) holds.
\end{proof}

If $f:K\rightarrow L$ is a functor, then it is a radiant when the categories $K$ and $L$ are viewed as constellations (so in particular, if $\rho$ above is a category isomorphism, then it is a constellation isomorphism as well).  Indeed the converse is also true.

\begin{pro} \label{radfunct}
If $K,L$ are categories then $\rho:K\rightarrow L$ is a radiant if and only if it is a functor.
\end{pro}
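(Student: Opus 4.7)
The plan is to dispatch the two directions separately, with the forward direction essentially immediate and the converse reducing to showing $R$ is preserved.

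For the forward direction, if $\rho \colon K\to L$ is a functor, it preserves composition by definition, so the first clause in the definition of a radiant holds. To get $D(s)\rho = D(s\rho)$, I would observe that in a category $D(s)$ is an identity (the identity on the source of $s$), and a functor sends identities to identities; since $s\rho \cdot D(s)\rho$ exists (being the image of $D(s)\cdot s$ read dually, or just by direct computation), the image of this identity must be the unique identity that serves as a left identity for $s\rho$, namely $D(s\rho)$.

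For the converse, suppose $\rho$ is a radiant. The composition clause is already the first part of the radiant definition, so the content is to verify that $\rho$ preserves $R$, i.e., $R(s)\rho = R(s\rho)$ for all $s\in K$. The key observation is that since $K$ is a category, every identity lies in $D(K)$ (by the remark that $D(K)$ equals the class of all identities in a category, coming out of Proposition \ref{rid} and its dual). In particular $R(s)$ is an identity of $K$, so $R(s) = D(R(s))$, and hence $R(s)\rho = D(R(s))\rho = D(R(s)\rho)$ by the radiant axiom, showing that $R(s)\rho$ is a projection in $L$, which by Proposition \ref{rid} applied in $L$ is an identity.

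Now I apply the radiant property to the equation $s\cdot R(s) = s$ in $K$, obtaining $s\rho \cdot R(s)\rho = s\rho$ in $L$. So $R(s)\rho$ is an identity of $L$ with which $s\rho$ composes. By Proposition \ref{catunique} applied to the category $L$, there is a unique element of $D(L)$ for which this holds, namely $R(s\rho)$; therefore $R(s)\rho = R(s\rho)$, and $\rho$ is a functor. The only mildly delicate step is this last uniqueness argument, but it is exactly what Proposition \ref{catunique} provides, and no further obstacle arises.
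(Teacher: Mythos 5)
Your proof is correct and follows essentially the same route as the paper: the forward direction is the standard observation that functors are radiants, and the converse applies $\rho$ to $s\cdot R(s)=s$, notes $R(s)\rho\in D(L)$, and invokes the uniqueness in Proposition \ref{catunique} to conclude $R(s)\rho=R(s\rho)$. (One trivial slip: in the forward direction you want $D(s)\rho\cdot s\rho$, the image of $D(s)\cdot s$, rather than $s\rho\cdot D(s)\rho$; the argument is otherwise exactly the paper's.)
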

\begin{proof}
If $P,Q$ are categorial constellations and $\rho:P\rightarrow Q$ is a radiant, then for each $s\in P$, there is a unique $e\in D(P)$ such that $s\cdot e$ exists (namely $R(s)$ when $P$ is viewed as a category), so $s\rho=(s\cdot e)\rho=(s\rho)\cdot (e\rho)$ where $e\rho\in D(Q)$, so by uniqueness, $e\rho$ is the unique $f\in D(Q)$ such that $(s\rho)\cdot f$ exists, that is, $R(s\rho)=e\rho=R(s)\rho$.  The converse was dealt with above.
\end{proof}

It follows that the the class of categorial constellations is a full subcategory of the category of all constellations.

We next obtain a description of those constellations embeddable in categories.  First note that any subconstellation of a categorial constellation must satisfy (Cat1) rather than just (C1).  For if $P$ is a subconstellation of categorial $C$, and for some $s,t,u\in P$, $(s\cdot t)\cdot u$ exists in $P$, then it exists in $C$ also and hence must equal $s\cdot (t\cdot u)$; but $t\cdot u\in P$ since it is a subconstellation, whence $s\cdot(t\cdot u)\in P$ (where it must equal $(s\cdot t)\cdot u$).  

We shall need two preliminary lemmas, the first of which is a variant of Proposition \ref{catunique}.

\begin{lem} \label{adjoin*}
Let $P$ be a constellation satisfying (Cat1). Let $P^*=P\cup \{\ast \}$ where $\ast\notin P$ and extend the partial binary operation in $P$ by putting $x\cdot \ast=x$ for all non-composable $x$ and $\ast\cdot \ast=\ast$.
Then $P^*$ is a constellation with $D(P^*)=D(P)\cup \{ \ast\}$.
\end{lem}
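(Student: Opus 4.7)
The plan is to define $D$ on $P^*$ by retaining the original $D$ on $P$ and setting $D(*)=*$, giving $D(P^*)=D(P)\cup\{*\}$, and then to verify (C1)--(C4) by case analysis according to which of the arguments equal $*$. The key structural observation simplifying this analysis is that $*$ can never appear on the left of a defined product except in $*\cdot *=*$: the extension only declares $x\cdot *=x$ for non-composable $x$ and $*\cdot *=*$, so $*\cdot y$ is undefined whenever $y\in P$. Consequently most sub-cases with $*$ appearing on the left become vacuous. The requirement that each projection $e\in E$ satisfy $e\cdot e=e$ is inherited on $D(P)$ and holds for $*$ by construction.

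I would first dispose of (C3) and (C4). For (C4), if $a\cdot g$ exists with $g\in E$, the only new cases are $a\in P$ non-composable with $g=*$ (giving $a\cdot *=a$) and $a=g=*$. For (C3), if $x\in P$ then $D(x)$ inherits the left-identity property, and no other element of $E$ is a left identity of $x$: uniqueness within $D(P)$ is the original axiom, and $*$ cannot serve since $*\cdot x$ is undefined. If $x=*$, then $D(*)\cdot *=*\cdot *=*$ and no $e\in D(P)$ can be a left identity of $*$, because every projection is composable (as $e\cdot e=e$), whence $e\cdot *$ is undefined in the extension.

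The main work lies in (C1) and (C2). For (C2), the forward direction is immediate by tracing the definition, and for the reverse direction the only genuinely new cases arise when $z=*$, splitting naturally according to whether $y$ is in $P$ and non-composable, $y=*$, or $x=*$, each of which is direct. The main obstacle is in (C1), specifically the case $z=*$, $y\in P$ non-composable, so that $y\cdot z=y$ and $x\cdot y$ exists by hypothesis. Here I need $(x\cdot y)\cdot *$ to be defined, which requires $x\cdot y$ itself to be non-composable. This is exactly where the assumption (Cat1) is used: if $(x\cdot y)\cdot w$ existed for some $w\in P$, then by (Cat1) also $x\cdot(y\cdot w)$ would exist, and then (C2) would force $y\cdot w$ to exist, contradicting non-composability of $y$. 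With this lemma in hand the remaining sub-cases of (C1) reduce to routine verifications: the all-$*$ case $(*\cdot *)\cdot *=*=*\cdot(*\cdot *)$, and the case $y=z=*$ with $x\in P$ non-composable, where both sides evaluate to $x$.
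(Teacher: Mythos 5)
Your proposal is correct and follows essentially the same route as the paper: a case analysis on which arguments equal $\ast$, with the crucial step being exactly the one you isolate, namely using (Cat1) to deduce that $x\cdot y$ is non-composable when $y$ is non-composable and $z=\ast$. The only cosmetic difference is that the paper verifies the streamlined axioms (Const1)--(Const3) of Proposition \ref{simpler} while you check (C1)--(C4) directly; the substance is identical.
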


\begin{proof} 
Let $x,y\in P^*$. If $\ast\cdot(x\cdot y)$ exists, then clearly $x\cdot y=\ast$, so that $x=y=\ast$ and $(\ast\cdot x)\cdot y=\ast$ exists and equals $\ast\cdot(x\cdot y)$.

If  $x\cdot (\ast \cdot y)$ exists, then $y=\ast$ and $\ast\cdot y=\ast$. Hence $(x\cdot \ast)\cdot y=x$ and equals $x\cdot (\ast\cdot y)$.

Suppose now that $x,y\in P$. If $x\cdot (y\cdot \ast)$ exists then $y$ is non-composable and $y\cdot \ast=y$, so that $x\cdot y$ exists. It follows from (Cat1) that $x\cdot y$ is non-composable so that $(x\cdot y)\cdot \ast=x\cdot y$ exists and equals $x\cdot (y\cdot \ast)$. We have thus shown that (Const1) holds.

A similar (but more straightforward) case by case analysis verifies (Const2). Condition (Const3) is clear.
\end{proof}

The following is now clear from Proposition \ref{catunique} and Lemma \ref{adjoin*}.

\begin{cor}  \label{P1cat}
Let $P$ be a constellation satisfying (Cat1) in which for each $s\in P$ there is at most one $e\in D(P)$ for which $s\cdot e$ exists.  Then $P^*$ as in Proposition \ref{adjoin*} is a categorial constellation.
\end{cor}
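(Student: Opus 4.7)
The plan is to apply Proposition \ref{catunique} to $P^*$. By Lemma \ref{adjoin*}, $P^*$ is already a constellation with $D(P^*)=D(P)\cup\{\ast\}$, so to conclude that $P^*$ is categorial I only need to verify that for each $s\in P^*$ there is a unique $e\in D(P^*)$ such that $s\cdot e$ exists.

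I would do a case analysis on $s\in P^*$. First, if $s=\ast$, then by construction $\ast\cdot \ast=\ast$ exists, while $\ast\cdot e$ is undefined for every $e\in D(P)$ since the definition in Lemma \ref{adjoin*} only adjoins products of the form $x\cdot\ast$ (and $\ast\cdot\ast$); so $\ast$ is the unique element of $D(P^*)$ right-composable with $\ast$. Second, if $s\in P$ is composable, say $s\cdot t$ exists for some $t\in P$, then Lemma \ref{2p3} gives that $s\cdot D(t)$ exists, and the standing uniqueness hypothesis forces $D(t)$ to be the unique such element of $D(P)$. Moreover $s\cdot\ast$ is not defined in $P^*$ because the extension only adjoins $x\cdot\ast$ for non-composable $x$, so $D(t)$ is the unique element of $D(P^*)$ right-composable with $s$. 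Finally, if $s\in P$ is non-composable in $P$, then no $e\in D(P)$ has $s\cdot e$ defined (else $s$ would be composable), while by construction $s\cdot\ast=s$ exists in $P^*$, so $\ast$ is the unique element of $D(P^*)$ right-composable with $s$.

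Having established existence and uniqueness in all three cases, Proposition \ref{catunique} immediately yields that $P^*$ is categorial. There is no real obstacle here: the content is essentially bookkeeping, matching the adjoined element $\ast$ to exactly those elements of $P$ that previously had no suitable projection on the right, and relying on the hypothesis to handle the remaining composable elements.
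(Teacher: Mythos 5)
Your proposal is correct and is exactly the argument the paper intends: the paper simply asserts the corollary is ``clear from Proposition \ref{catunique} and Lemma \ref{adjoin*}'', and your three-way case analysis (on $\ast$, composable $s\in P$, and non-composable $s\in P$) supplies precisely the routine verification of the existence-and-uniqueness hypothesis of Proposition \ref{catunique} that this assertion leaves implicit.
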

%\begin{proof}
%We begin with (Const1).  Pick $x,y,z\in P^u$, such that $x\cdot (y\cdot z)$ exists.  Of course if all three of $x,y,z$ are in $P$ then $x\cdot (y\cdot z)=(x\cdot y)\cdot z$ which exists. 
%
%If $x=u$ then $y\cdot z=u$, so $y=z=u$, and so $x\cdot (y\cdot z)=u$ and $(x\cdot y)\cdot z$ exists and equals $u$ also. 
%
%If $y=u$ and $x\neq u$ then since $y\cdot z$ exists, it must be that $z=u$, so $x\cdot (y\cdot z)=x\cdot u=x=(x\cdot y)\cdot z$.
%
%If $x,y\in P$ and $z=u$, then $y$ is rangeless and $x\cdot y$ exists.  Suppose $x\cdot y$ is not rangeless, so there is $e\in D(P)$ such that $(x\cdot y)\cdot e$ exists.  So $x\cdot(y\cdot e)$ exists by (Cat1), and so in particular $y\cdot e$ exists, so $y$ is not rangeless, a contradiction.  So $x\cdot y$ is rangless, so $(x\cdot y)\cdot u=x\cdot y=x\cdot (y\cdot u)$ exists.
%
%(Const 2) and (Const3) are easily established by even more straightforward case analyses.
%
%So $P^u$ is a constellation, and is obviously categorial by Proposition \ref{catunique}, with $P$ a subconstellation.
%\end{proof}

\begin{pro}  \label{catcon}
The constellation $P$ is embeddable in a category if and only if it satisfies (Cat1) and for each $s\in P$, there is at most one $e\in D(P)$ for which $s\cdot e$ exists.
\end{pro}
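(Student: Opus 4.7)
The plan is to use the $P^*$ construction from Lemma \ref{adjoin*} and its Corollary \ref{P1cat} for the sufficiency direction, and to pull properties back through a strong injective radiant for the necessity direction.

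For necessity, suppose $\rho:P\rightarrow K$ is a constellation embedding with $K$ a category. To verify (Cat1) in $P$: if $(s\cdot t)\cdot u$ exists in $P$, then its image $((s\rho)\cdot(t\rho))\cdot(u\rho)$ exists in $K$, so (Cat1) in $K$ yields that $(s\rho)\cdot((t\rho)\cdot(u\rho))$ also exists and agrees.  Strongness of $\rho$ then pulls the existence of $t\cdot u$ back to $P$ (from the existence of $(t\rho)\cdot (u\rho)$), after which (Const2) applied in $P$, together with a second appeal to strongness, yields the existence of $s\cdot(t\cdot u)$; injectivity of $\rho$ promotes the equality in $K$ to equality in $P$.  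For the uniqueness condition: if $e_1,e_2\in D(P)$ both satisfy that $s\cdot e_i$ exists, then their images lie in $D(K)$ (as $\rho$ is a radiant), and since $K$ viewed as a constellation is categorial, Proposition \ref{catunique} forces $e_1\rho=e_2\rho=R(s\rho)$; injectivity then gives $e_1=e_2$.

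For sufficiency, assume $P$ satisfies (Cat1) together with the uniqueness condition.  Then Corollary \ref{P1cat} gives at once that $P^*$ from Lemma \ref{adjoin*} is a categorial constellation, hence (the constellation reduct of) a category.  It therefore suffices to check that the inclusion $\iota:P\hookrightarrow P^*$ is an embedding of constellations.  It is obviously injective, and it preserves products and projections because the operations of $P^*$ restrict back to those of $P$ on elements of $P$ and $D(P)\subseteq D(P^*)$.  Strongness follows because every new product in $P^*$ introduced by Lemma \ref{adjoin*} involves the new symbol $\ast\notin P$, so any product $s\cdot t$ existing in $P^*$ with $s,t\in P$ already exists in $P$.

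The one step that requires real care is the (Cat1) verification in the necessity direction, where strongness must be invoked twice and one has to track which intermediate products are guaranteed to lie in $P$; the rest of the argument is a direct assembly of Proposition \ref{catunique} and Corollary \ref{P1cat}.
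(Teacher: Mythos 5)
Your proposal is correct and follows essentially the same route as the paper: the forward direction pulls (Cat1) and the uniqueness condition back through the embedding (the paper phrases this via the image being a subconstellation of a categorial constellation, in the paragraph preceding the proposition), and the converse is exactly the appeal to Corollary \ref{P1cat} together with the observation that $P$ embeds in $P^*$. The extra detail you supply in verifying (Cat1) and the strongness of the inclusion $P\hookrightarrow P^*$ is sound and merely makes explicit what the paper leaves implicit.
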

\begin{proof}
If $P$ embeds in a category, then (Cat1) must hold, and for each $s\in P$ there can be at most one $e\in D(P)$ for which $s\cdot e$ exists.  The converse is immediate from Corollary \ref{P1cat}.
\end{proof}

Proposition \ref{catunique} shows that for any constellation $P$, (Cat1) follows on the assumption that for all $s\in P$ there is a unique $e\in D(P)$ such that $s\cdot e$ exists.  However, (Cat1) cannot be omitted in Proposition \ref{catcon}.  Consider any set $X$ having non-empty proper subset $Y$; then $\C_X^Y$ consisting of all partial functions in $\C_X$ having domain $Y$ is easily seen to be a constellation under the same operations as those on $\C_X$.  For each $s\in \C_X^Y$, there is obviously at most one $e\in D(\C_X^Y)$ for which $s\cdot e$ exists (since $|D(\C_X^Y)|=1$).  However, it does not generally satisfy (Cat1), and hence is not generally embeddable in a category.  For example, if $X=\{1,2\}$ and $Y=\{1\}$, let $s=\{(1,1)\}$ and $t=\{(1,1),(2,2)\}$, the only element of $D(\C_X^Y)$; it is readily verified that $(s\cdot t)\cdot s=s$ but that $t\cdot s$ is not defined, so nor is $s\cdot(t\cdot s)$.  (This example also arises from the poset $P=\{e,f\}$ in which $e\leq f$: $(e\cdot f)\cdot e$ exists but $e\cdot (f\cdot e)$ does not.)

Every small category $C$ may be viewed as a bi-unary semigroup $S$, consisting of ground set $C\cup \{0\}$ where $0\not\in C$, in which one defines $ab=a\cdot b$ if $a\cdot b$ exists in $C$, and $ab=0$ otherwise.  However, nothing similar can be done with constellations, since, as we have just seen, there can be elements $x,y,z$ of a constellation $C$ for which $(x\cdot y)\cdot z$ is defined yet $x\cdot(y\cdot z)$ is not.  So a similar approach of defining a total binary operation on a constellation augmented by a new element $0$ may not yield an associative operation, and so there is no way to view many constellations as equivalent to any form of semigroup (even one defined on a class rather than a set).

\subsection{Subconstellations}

If $P$ is a partial algebra then non-empty $Q\subseteq P$ is a said to be a {\em subalgebra} if it is closed under the partial operations on $P$.  For a constellation $P$, this means that non-empty $Q\subseteq P$ is a subalgebra if for $a,b\in Q$, if $a\cdot b$ exists in $P$ then it lies in $Q$, and $D(a)\in Q$ for all $a\in Q$.  The following is easily shown.

\begin{pro}  \label{subisconst}
If $P$ is a constellation with subalgebra $Q$, then $Q$ is a constellation under the restrictions of the operations of $P$.
\end{pro}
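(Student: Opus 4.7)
The plan is to verify each of the constellation axioms (C1)--(C4) for $Q$ by exploiting the fact that closure of $Q$ under the partial operations means, for all $a,b\in Q$, the product $a\cdot b$ exists in $Q$ precisely when it exists in $P$ (and then it lies in $Q$), and similarly $D(a)\in Q$ for every $a\in Q$. The projections of $Q$ should then be $E_Q=\{D(x)\mid x\in Q\}\subseteq E$, and since $e\cdot e=e$ holds in $P$ for every $e\in E$, it also holds in $Q$ for every $e\in E_Q$.

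First I would handle (C1) and the equivalent formulation (Const2) of (C2) given by Proposition~\ref{const2}, which is cleaner to check. For (Const2), if $x\cdot y$ and $y\cdot z$ exist in $Q$, they exist in $P$, so $x\cdot(y\cdot z)$ exists in $P$ by (Const2) applied there; closure of $Q$ under $\cdot$ forces $y\cdot z\in Q$ and then $x\cdot(y\cdot z)\in Q$, so it exists in $Q$. For (C1), suppose $x\cdot(y\cdot z)$ exists in $Q$. Then it exists in $P$, so $(x\cdot y)\cdot z$ exists in $P$ and equals it by (C1) in $P$; existence in $P$ of $x\cdot y$ follows, and two applications of closure put $x\cdot y$ and $(x\cdot y)\cdot z$ into $Q$, giving existence and equality in $Q$.

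Next I would verify (C3) and (C4). For $x\in Q$, closure gives $D(x)\in Q\cap E_Q$, and $D(x)\cdot x=x$ holds in $Q$ because it holds in $P$. For uniqueness, if $e\in E_Q$ and $e\cdot x=x$ in $Q$, then $e\in E$ and $e\cdot x=x$ in $P$, so $e=D(x)$ by (C3) in $P$. Axiom (C4) transfers immediately: for $a\in Q$ and $g\in E_Q\subseteq E$, if $a\cdot g$ exists in $Q$ it exists in $P$, so $a\cdot g=a$ by (C4) in $P$.

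There is no real obstacle here; the only thing to be careful about is the bookkeeping distinction between ``exists in $Q$'' and ``exists in $P$ with value in $Q$'', which the closure hypothesis on subalgebras reconciles. The whole argument is a routine transfer along a partial-algebra inclusion.
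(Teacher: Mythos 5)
Your proof is correct, and it carries out exactly the routine axiom-by-axiom transfer that the paper has in mind when it says the result ``is easily shown'' and omits the proof. The one point needing care --- that ``exists in $Q$'' coincides with ``exists in $P$'' for elements of $Q$, with the value landing in $Q$ by closure --- is handled properly throughout.
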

%\begin{proof}
%Suppose $a,b,c\in Q$.  If $a\cdot (b\cdot c)$ exists in $Q$ then it exists in $P$ also and equals $(a\cdot b)\cdot c$, so in particular $a\cdot b$ exists in $P$ and hence is in $Q$, so $(a\cdot b)\cdot c$ is in $Q$ (where it equals $a\cdot (b\cdot c)$).  So (C1) is satisfied.  
%
%If $a\cdot b$ and $b\cdot c$ exist in $Q$ then they exist in $P$, whence so does $a\cdot(b\cdot c)$, which therefore must be in $Q$.  So (Const2) is satisfied.
%
%For every $a\in Q$, we have $D(a)\in Q$, $D(a)\cdot D(a)=D(a)$ and $D(a)\cdot a$ in $P$ and hence in $Q$.  Moreover if $D(b)\cdot a=a$ in $Q$ for some $b\in Q$, then this also occurs in $P$, so $D(b)=D(a)$, so (C3) holds.  
%
%Finally, if $b\cdot D(a)$ exists in $Q$ for some $a,b\in Q$ then it exists in $P$ also, and equals $b$ there, so it equals $b$ in $Q$ also.  So (C4) holds.
%
%Hence by Proposition \ref{const2}, the subalgebra $Q$ is a constellation.
%\end{proof}

For this reason, we refer to a subalgebra $Q$ of a constellation $P$ as a {\em subconstellation}.  By (C4), $D(P)$ is a subconstellation of the constellation $P$.

\begin{pro}  \label{compembed}
The constellation $P$ is a subconstellation of $P^1$ as in Proposition \ref{adjoin}.  Hence every constellation arises as a subconstellation of a constellation with global right identity, and in particular of a composable constellation.
\end{pro}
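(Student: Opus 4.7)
The plan is very direct since $P^1$ was built precisely to enlarge $P$ with a new global right identity. First I would verify that $P$ is closed under the partial operations of $P^1$. For the partial binary operation, the only new products introduced by adjoining $1$ involve the element $1$ itself; for $a,b \in P$ the product $a \cdot b$ (if defined) in $P^1$ is by construction equal to $a \cdot b$ in $P$, which lies in $P$. So closure under $\cdot$ is immediate.

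Next I would check closure under $D$, which is the only point requiring a moment's thought. I need to show that $D^{P^1}(a) = D^{P}(a)$ for every $a \in P$, so that $D^{P^1}(a) \in P$. The set of projections of $P^1$ is $D(P) \cup \{1\}$, and $D^{P}(a) \in D(P)$ is a projection of $P^1$ satisfying $D^{P}(a) \cdot a = a$ in $P$, hence in $P^1$. By the uniqueness clause in (C3) applied inside the constellation $P^1$, this forces $D^{P^1}(a) = D^{P}(a) \in P$. Combined with the previous step, this shows $P$ is a subalgebra, and so a subconstellation of $P^1$ by Proposition \ref{subisconst}.

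For the second sentence of the proposition, $P^1$ has the global right identity $1$ by Proposition \ref{adjoin}, which gives the first half. For the composability claim, observe that for every $s \in P^1$, the product $s \cdot 1$ is defined (equal to $s$ if $s \in P$, and equal to $1$ if $s = 1$), so every element of $P^1$ is composable, i.e.\ $P^1$ is composable.

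There is really no serious obstacle here: the only point that is not purely formal is confirming that the domain operation inherited from $P^1$ agrees with the original one on $P$, and this is settled in one line by the uniqueness-of-left-identity axiom (C3).
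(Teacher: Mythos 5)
Your proof is correct; the paper states this proposition without proof, treating it as immediate, and your argument is exactly the routine verification being left implicit (closure of $P$ under the products of $P^1$, agreement of the domain operations via uniqueness in (C3), and composability of $P^1$ from the global right identity $1$).
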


\begin{pro}  \label{subembed}
The image of a strong radiant $\rho:P\rightarrow Q$ is a subconstellation of $Q$, isomorphic to $P$ if $\rho$ is an embedding.
\end{pro}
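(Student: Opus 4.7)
The plan is to verify directly that $\mathrm{Im}(\rho)$ satisfies the two closure conditions defining a subalgebra in the sense of Proposition \ref{subisconst}, and then to interpret $\rho$ as a surjection onto that subalgebra.

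First I would show closure under the partial product. Take $a,b\in\mathrm{Im}(\rho)$, so $a=s\rho$ and $b=t\rho$ for some $s,t\in P$, and suppose $a\cdot b$ exists in $Q$; that is, $s\rho\cdot t\rho$ exists. Because $\rho$ is strong, $s\cdot t$ exists in $P$, and then the radiant condition gives $s\rho\cdot t\rho=(s\cdot t)\rho\in\mathrm{Im}(\rho)$. For closure under $D$, if $a=s\rho\in\mathrm{Im}(\rho)$, then by the second radiant condition $D(a)=D(s\rho)=D(s)\rho\in\mathrm{Im}(\rho)$. Proposition \ref{subisconst} then identifies $\mathrm{Im}(\rho)$ as a subconstellation of $Q$.

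For the second assertion, assume in addition that $\rho$ is injective, so $\rho$ is an embedding. View $\rho$ as a map $\bar\rho:P\to\mathrm{Im}(\rho)$. This is surjective by definition of the image, and still satisfies the two radiant conditions (these are statements about equalities and existence of products, both of which are inherited from $Q$ since $\mathrm{Im}(\rho)$ carries the restricted operations). It remains to check $\bar\rho$ is strong into the subconstellation: if $s\bar\rho\cdot t\bar\rho$ exists in $\mathrm{Im}(\rho)$, then it exists in $Q$ (the partial product on $\mathrm{Im}(\rho)$ is the restriction of that on $Q$), and strongness of the original $\rho$ yields existence of $s\cdot t$ in $P$. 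Thus $\bar\rho$ is a bijective strong radiant, that is, a surjective embedding, hence an isomorphism $P\cong\mathrm{Im}(\rho)$ in the sense given at the end of Subsection \ref{axeg}.

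There is no real obstacle here; the only point worth flagging is that ``existence of a product'' in the subconstellation $\mathrm{Im}(\rho)$ coincides with existence in $Q$, which is exactly what makes strongness transfer from $\rho$ to $\bar\rho$ without extra work.
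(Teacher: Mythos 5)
Your proof is correct and complete. The paper itself disposes of this proposition in one line, by appealing to the general theory of partial algebras in Gr\"atzer's book (the image of a strong homomorphism of partial algebras is a subalgebra) and then invoking Proposition \ref{subisconst}; it does not spell out the isomorphism claim at all. What you have done is supply the direct, self-contained verification that the citation hides: closure of the image under the partial product (which is exactly where strongness is needed --- without it one could have $s\rho\cdot t\rho$ defined in $Q$ with no witness $s\cdot t$ in $P$, and the image need not be closed), closure under $D$ via the second radiant condition, and then the observation that the corestriction $\bar\rho$ is a bijective strong radiant onto the subconstellation, hence an isomorphism in the sense defined in Section \ref{axeg}. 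Your closing remark --- that existence of a product in $\mathrm{Im}(\rho)$ coincides with existence in $Q$, so strongness transfers to $\bar\rho$ for free --- is precisely the point that makes the corestriction argument work, and it is good that you flagged it explicitly. In short: same route as the paper, but with the content of the cited general theory actually proved.
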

\begin{proof}
It follows from the general theory of partial algebras as in \cite{gratzer} that the image $P\rho$ of $P$ in $Q$ is a subalgebra, hence a subconstellation by Proposition \ref{subisconst}.
\end{proof}

On the other hand, if $P$ is a subconstellation of $Q$ then the inclusion map $i:P\rightarrow Q$ is easily seen to be an embedding.

\subsection{Normality and a Cayley theorem}

There is a natural quasiordering on the projections in a constellation $P$.

\begin{pro}  \label{parnormal}
If $P$ is a constellation, define the relation $\lesssim$ on $D(P)$ by $e\lesssim f$ if and only if $e\cdot f$ exists.  Then $\lesssim$ is a quasiorder, and is a partial order if and only if
$\mbox{for all }e,f\in D(P),\mbox{ if $e\cdot f$ and $f\cdot e$ exist, then }e=f.$
\end{pro}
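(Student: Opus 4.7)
The plan is to verify reflexivity and transitivity of $\lesssim$ directly, and then observe that antisymmetry (which is the stated condition) is the only thing remaining for a partial order.

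For reflexivity, fix $e\in D(P)$. The definition of constellation states that every $e\in E=D(P)$ satisfies that $e\cdot e$ exists (and equals $e$), so $e\lesssim e$ is immediate.

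The transitivity argument will be the main work, but is short. Suppose $e,f,g\in D(P)$ with $e\lesssim f$ and $f\lesssim g$, so that $e\cdot f$ and $f\cdot g$ both exist. By (C2) (in the form of (Const2)), $e\cdot(f\cdot g)$ exists. Applying (C1) to this gives that $(e\cdot f)\cdot g$ exists. Now $f\in E$ and $e\cdot f$ exists, so by (C4) we have $e\cdot f=e$. Substituting, $e\cdot g$ exists, i.e.\ $e\lesssim g$. This is the step where all four axioms (well, (C1), (C2) and (C4)) come into play, and I expect it to be the only non-trivial point of the argument.

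For the second assertion, once $\lesssim$ is known to be a quasiorder, it is a partial order if and only if it is antisymmetric, i.e.\ if and only if $e\lesssim f$ and $f\lesssim e$ jointly imply $e=f$. Unwinding the definition of $\lesssim$, this is precisely the displayed condition ``for all $e,f\in D(P)$, if $e\cdot f$ and $f\cdot e$ exist, then $e=f$", so the equivalence is immediate.
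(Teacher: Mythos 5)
Your proof is correct and follows essentially the same route as the paper's: reflexivity from idempotency of projections, transitivity via (Const2) then (C1) then the identification $e\cdot f=e$ from (C4), and antisymmetry read off directly as the displayed condition. No gaps.
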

\begin{proof}
For $e,f,g\in D(P)$, if $e\lesssim f$ and $f\lesssim g$ then $e=e\cdot f$ and $f=f\cdot g$ exist, so $e\cdot (f\cdot g)$ exists by (C2), hence so does $(e\cdot f)\cdot g=e\cdot g$ by (C1).  Reflexivity is obvious, so $\lesssim$ is a quasiorder.  It is a partial order if and only if it is antisymmetric, which translates into the condition that for all $e,f\in D(P)$, if $e\cdot f$ and $f\cdot e$ exist, then $e=f$.
\end{proof}

We call the quasiorder $\lesssim$ above the {\em standard quasiorder} on $D(P)$; let $\approx$ be the associated equivalence relation, so that $e\approx f$ if and only if $e\lesssim f$ and $f\lesssim e$.  The condition ensuring $\lesssim$ is a partial order (that is, $\approx$ is the identity map) is called {\em normality}, and we say $P$ is a {\em normal constellation} in this case.  A normal constellation has at most one global right identity element.  A quasiordered set is normal when viewed as a constellation if and only if it is a poset.  A subconstellation of a normal constellation is normal.  Every categorial constellation is normal since $e\lesssim f$ if and only if $e=f$ (where $e,f$ are identities). 

In each example of a constellation given in Section \ref{examples}, the standard quasiorder on $D(P)$ corresponds to the notion of being a substructure in the appropriate sense, so all the examples given there are normal.  From Theorem \ref{equiv}, every inductive constellation is normal (since each embeds as a constellation in some $\C_X$), but the converse is certainly false.  For example, although $\C_X$ is inductive, $\C^{\infty}_X$ is not, as $D(\C^{\infty}_X)$ is not a semilattice and so condition (I) for inductive constellations as in \cite{constell} fails.

If $(Q,\leq)$ is a quasiordered set, then as noted earlier, we may view it as a constellation in which $D(Q)=Q$ by defining $e=e\cdot f$ if $e\leq f$ in $(Q,\leq)$, and letting $D(e)=e$ for all $e\in Q$.  Then $\leq$ is nothing but the standard quasiorder on $D(Q)=Q$ viewed as a constellation.  Indeed there is an isomorphism between the categories of quasiordered sets and constellations $Q$ in which $D(Q)=Q$, since quasiorder-preserving maps are nothing but radiants under this correspondence, as is easily seen.  This isomorphism specialises to one between normal constellations in which $D(Q)=Q$ and partially ordered sets.

The uniqueness assumption in (Const3) in our new definition of constellations may be omitted in the normal case, generalising the case of categorial constellations.

\begin{pro}
Suppose $C$ is a class with a partial binary operation satisfying (Const1) and (Const2), and for all right identities $e$ and $f$, the normality property is satisfied.  Then (Const3) is equivalent to 
\bi
\item[] (Const3A) for each $x\in P$, there is a right identity $e$ such that $e\cdot x=x$.
\ei
\end{pro}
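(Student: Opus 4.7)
The claim is a two-way implication. The direction $(\text{Const3}) \Rightarrow (\text{Const3A})$ is immediate from the definition, so the content is the converse.

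My plan is as follows. Assume $C$ satisfies (Const1), (Const2), (Const3A), and the normality hypothesis that any two right identities $e,f$ for which both $e\cdot f$ and $f\cdot e$ exist must be equal. Fix $x\in C$ and suppose $e$ and $e'$ are both right identities satisfying $e\cdot x=x$ and $e'\cdot x=x$. The goal is to deduce $e=e'$, and by symmetry it suffices to show that both $e\cdot e'$ and $e'\cdot e$ exist, since normality then forces equality.

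To produce $e\cdot e'$, I use (Const2) applied to the existing products $e\cdot x$ and $e'\cdot x$: this yields existence of $e\cdot(e'\cdot x)$. Then (Const1) gives existence of $(e\cdot e')\cdot x$, which in particular forces the inner product $e\cdot e'$ to be defined (a binary partial operation cannot have $(a\cdot b)\cdot c$ defined unless $a\cdot b$ is defined first). Running the same argument with the roles of $e$ and $e'$ swapped shows $e'\cdot e$ exists. Now the normality hypothesis yields $e=e'$, establishing uniqueness, so (Const3) holds.

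There is no real obstacle here; the proof is essentially the same one used to show uniqueness of left identities in categories, with normality inserted precisely where ``$e\cdot e'$ is a right identity'' would have been invoked in the category case. The only subtle point to be careful about is the reading of the normality hypothesis: in the present setting $D$ has not yet been defined, so normality must be interpreted directly in terms of right identities of the partial operation rather than projections in $D(P)$. Once that is fixed, the argument above is entirely routine.
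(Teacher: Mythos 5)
Your overall route is the same as the paper's: show that any two right identities $e,e'$ fixing $x$ on the left satisfy both $e\cdot e'$ and $e'\cdot e$ existing, then invoke normality. However, the one step you justify by an axiom is justified by the wrong axiom. You claim that (Const2) applied to the existing products $e\cdot x$ and $e'\cdot x$ yields existence of $e\cdot(e'\cdot x)$. It does not: (Const2) has the shape ``if $a\cdot b$ and $b\cdot c$ exist then $a\cdot(b\cdot c)$ exists,'' so to conclude that $e\cdot(e'\cdot x)$ exists you would need $e\cdot e'$ and $e'\cdot x$ to exist --- and $e\cdot e'$ is precisely what you are trying to establish. The two products you actually have, $e\cdot x$ and $e'\cdot x$, do not fit the pattern of (Const2) in any way that produces $e\cdot(e'\cdot x)$.

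The step is nonetheless easily repaired, and the repair is exactly what the paper does: since $e'\cdot x=x$ by hypothesis, the expression $e\cdot(e'\cdot x)$ is literally the expression $e\cdot x$, which is defined; no axiom is needed, only substitution. From there your argument is correct and coincides with the paper's: (Const1) gives existence of $(e\cdot e')\cdot x$, which forces $e\cdot e'$ to be defined; by symmetry $e'\cdot e$ is defined; and normality (read, as you correctly note, in terms of right identities rather than a not-yet-defined $D$) gives $e=e'$.
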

\begin{proof}
Suppose (Const3A) holds.  Suppose $e\cdot x=f\cdot x=x$ where $e,f$ are right identities.  Then $e\cdot(f\cdot x)$ is defined, hence so is $(e\cdot f)\cdot x$ by (Const1), so $e\cdot f$ exists.  By symmetry, so does $f\cdot e$.  Hence $e=f$, and (Const3) holds.
\end{proof}

So (Const3A) may be used in place of (Const3) to define normal constellations. 

Recall the constellation of partial functions $\C_X$ on the set $X$, defined above.  
Let us say a small constellation $P$ is {\em functional} if it embeds in ${\mc C}_X$ for some set $X$.  In particular, inductive constellations are functional (Proposition $3.9$ of \cite{constell}).  By Proposition \ref{subembed} and the comment following it, $P$ is functional if and only if it is (isomorphic to) a subconstellation of ${\mc C}_X$ for some set $X$.

\begin{thm}  \label{equiv}
A small constellation is functional if and only if it is normal.
\end{thm}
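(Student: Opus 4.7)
The plan is to prove both directions separately. The easy ``only if'' direction follows by observing that $\C_X$ is itself normal: its projections are identity restrictions to subsets $A,B\subseteq X$, and $e\cdot f$ exists if and only if $A\subseteq B$; so if both $e\cdot f$ and $f\cdot e$ exist then $A=B$ and $e=f$. Since normality is plainly inherited by subconstellations, every functional constellation is normal.

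For the ``if'' direction I would use a Cayley-style construction, taking $X=P$ itself. For each $s\in P$, define a partial function $\rho_s$ on $P$ by $\rho_s(x)=x\cdot s$ whenever this product exists, and let $\rho:P\rightarrow \C_P$ send $s$ to $\rho_s$. To verify that $\rho$ is a radiant, one checks that if $s\cdot t$ exists then (using (C1) and (C2)) $\mathrm{Im}(\rho_s)\subseteq \mathrm{Dom}(\rho_t)$, and that the composite partial function agrees with $\rho_{s\cdot t}$ on a common domain. The identity $D(\rho_s)=\rho_{D(s)}$ then follows from Lemma \ref{2p3} (which gives that $x\cdot s$ exists iff $x\cdot D(s)$ does), together with (C4) (which forces $x\cdot D(s)=x$ on this common domain).

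The heart of the proof, and the only place where normality is deployed, is in showing that $\rho$ is a strong embedding. Strongness is immediate: since $D(s)\in \mathrm{Dom}(\rho_s)$ with $\rho_s(D(s))=s$, we have $s\in \mathrm{Im}(\rho_s)$, so whenever $\rho_s\cdot \rho_t$ exists in $\C_P$ the product $s\cdot t$ must exist in $P$. For injectivity, suppose $\rho_s=\rho_t$. Evaluating at $D(s)$, which lies in the common domain because $D(s)\cdot s=s$, yields $s=D(s)\cdot t$, and evaluating at $D(t)$ symmetrically gives $t=D(t)\cdot s$. Applying Lemma \ref{2p3} and (C4), it follows that $D(s)\cdot D(t)=D(s)$ and $D(t)\cdot D(s)=D(t)$ both exist in $D(P)$; normality then forces $D(s)=D(t)$, and writing $e$ for this common projection we get $s=e\cdot t=D(t)\cdot t=t$. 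The main obstacle is simply arranging the argument so that normality is invoked at exactly this point; without it, $\rho$ would only be a radiant, not an embedding.
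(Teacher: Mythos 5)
Your proof is correct and follows essentially the same route as the paper's: the forward direction via the observation that the standard quasiorder on projections of a subconstellation of $\C_X$ is set inclusion, and the converse via the Cayley map $s\mapsto\rho_s$ into $\C_P$ with injectivity extracted from normality by exactly the computation $s=D(s)\cdot t$, $t=D(t)\cdot s$, hence $D(s)=D(t)$. The only difference is that you verify directly that $\rho$ is a strong radiant, whereas the paper delegates this to Proposition 2.6 of the cited reference; your verification is sound.
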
 
\begin{proof}
If the small constellation $P$ is functional then for $e,f\in D(P)$, $e\lesssim f$ can be interpreted as set inclusion of the domain of $e$ in the domain of $f$; of course, set inclusion is a partial order, proving normality.

Conversely, if the small constellation $P$ is normal then the strong radiant $\rho:P\rightarrow {\mc C}_P$ given by $s\mapsto \rho_s$, where $x\rho_s=x\cdot s$ if it exists and undefined otherwise (as in Proposition $2.6$ of \cite{constell}) is also injective, hence an embedding.  For suppose $\rho_s=\rho_t$. Then $D(s)\cdot t$ exists and equals $s$ (since $D(s)\cdot s$ exists and equals $s$), so $D(s)\cdot D(t)$ exists by Lemma \ref{2p3}.  Similarly, $D(t)\cdot D(s)$ exists, and so $D(s)=D(t)$ by normality.  Hence $s=D(s)\cdot t=D(t)\cdot t=t$.  So $P$ is functional.
\end{proof}

There is a similar well-known Cayley-type theorem for categories: every small category is embeddable in the small category of subsets of some set equipped with the cod-functions between them.

\subsection{Congruences, quotients and homomorphism theorems}

There are natural notions of congruence and quotient for constellations, which are again special cases of more general notions for partial algebras as in \cite{gratzer}.  

Given a constellation $C$ we say the equivalence relation $\delta$ is a {\em congruence} if, whenever $(s_1,s_2)\in \delta$ and $(t_1,t_2)\in \delta$, and both $s_1\cdot t_1$ and $s_2\cdot t_2$ are defined, it is the case that that $(s_1\cdot t_1, s_2\cdot t_2)\in \delta$, and $(D(s_1),D(s_2))\in \delta$; it is a {\em strong congruence} if it has the property that $s_1\cdot t_1$ is defined if and only if $s_2\cdot t_2$ is, for all such $s_1,t_1,s_2,t_2$.  The kernel $\delta$ of the radiant $\rho:P\rightarrow Q$ (defined to be an equivalence relation on $P$ in the usual way) is a congruence but may not be strong.

Given a congruence $\delta$ on the constellation $P$, let $[x]$ denote the $\delta$-class containing $x\in C$.  Consistent with its definition for general partial algebras, we may define the {\em quotient} $P/\delta$ by setting $[s]\cdot[t]=[u]$ if $s_1\cdot t_1=u_1$ for some $s_1\in [s]$, $t_1\in [t]$ and $u_1\in [u]$; this is easily seen to be well-defined.  (If $\delta$ is a strong congruence, we may simply define $[s]\cdot [t]=[s\cdot t]$ whenever $s\cdot t$ exists.)  We also define $D([s])=[D(s)]$ for all $s\in S$.  

Given a congruence $\delta$ on the constellation $C$, the natural map taking $x$ to $[x]$ for all $x\in C$ is a partial algebra homomorphism $C\rightarrow C/\delta$, strong if and only if $\delta$ is.  In important cases, the quotient $C/\delta$ is in fact a constellation also.  Indeed it is possible to write down conditions on a congruence $\delta$ defined on the constellation $P$ that characterise when $P/\delta$ is a constellation, based on any of the equivalent sets of conditions that can be used to define constellations given above.  These conditions need not be met in all cases, as the following example shows.  Let $P=\{s,t_1,t_2,u\}$ be the constellation in which all four elements are idempotent, and $s\cdot t_1=s$, $t_2\cdot u=t_2$, with no other products defined.  (This arises from the partial order on the set $P$ given by $s\leq t_1$ and $t_2\leq u$.)   Then the equivalence relation $\delta$ with classes $a=\{s\}, b=\{t_1,t_2\}, c=\{u\}$ is easily seen to be a congruence, and $a\cdot b=a$, $b\cdot c=b$.  Note that $a\cdot(b\cdot c)=a$ exists yet $(a\cdot b)\cdot c$ does not, so (C1) fails.  

We say the congruence $\delta$ on the constellation $P$ is {\em right strong} if, whenever $[s]\cdot [t]$ exists in $P/\delta$, there exists  $s_1\in [s]$ such that $s_1\cdot t$ exists in $P$.  Put another way, $\delta$ is right strong if, whenever $s\cdot t$ exists in $P$, then for all $t_1\ \delta\ t$ there exists $s_1\ \delta\ s$ for which $s_1\cdot t_1$ exists.  Obviously, if $\delta$ is strong then it is right strong.

\begin{pro}  \label{rsquot} 
Let $P$ be a constellation, $\delta$ a right strong congruence on $P$.  Then $Q=P/\delta$ is a constellation for which $D(Q)=\{[e]\mid e\in D(P)\}$, and $Q$ is composable if $P$ is.
\end{pro}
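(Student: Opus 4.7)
The plan is to define the unary operation on $Q$ by $D([s]) = [D(s)]$, which is well-defined since $\delta$ is a congruence, and then to verify the constellation axioms (C1)--(C4) using right-strongness as the central lifting tool: whenever $[s]\cdot[t]$ exists in $Q$, we may represent it in $P$ keeping the \emph{given} second factor $t$ fixed and replacing only the first factor. Under this definition $D(Q) = \{[D(s)]\mid s\in P\} = \{[e]\mid e\in D(P)\}$ is immediate, so the projection set of $Q$ is $E=\{[e]\mid e\in D(P)\}$, and each projection is idempotent since $D(s)\cdot D(s)=D(s)$ in $P$ gives $[D(s)]\cdot [D(s)]=[D(s)]$ in $Q$.

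For (C1), I would assume $[x]\cdot([y]\cdot[z])$ exists, first apply right-strongness to $[y]\cdot[z]$ to produce $y_1\in[y]$ with $w:=y_1\cdot z$ defined in $P$, and then apply it again to $[x]\cdot[w]$ to produce $x_1\in[x]$ with $x_1\cdot w$ defined in $P$. Axiom (C1) in $P$ then yields $(x_1\cdot y_1)\cdot z = x_1\cdot(y_1\cdot z)$, and passing to $\delta$-classes gives both the required existence and the required equality in $Q$. The (Const2) form of (C2) from Proposition \ref{const2} is handled analogously in a single pass. Composability is automatic: if $P$ is composable and $s\cdot t$ exists in $P$, then $[s]\cdot[t]$ exists in $Q$.

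For (C3), existence is witnessed by $D(s)\cdot s = s$; for uniqueness, suppose $[e]\in E$ with $[e]\cdot[s]=[s]$ and write $[e]=[D(y)]$. Right-strongness supplies $e_1\in [e]$ with $e_1\cdot s$ existing in $P$. Since $\delta$ respects $D$ and $D$ fixes projections, $[D(e_1)] = [D(D(y))] = [D(y)] = [e]$; and since $e_1\cdot s\,\delta\, s$, Lemma \ref{2p3} combined with the congruence property gives $[e] = [D(e_1)] = [D(e_1\cdot s)] = [D(s)]$. For (C4), if $[a]\cdot[g]$ exists with $[g]=[D(x)]\in E$, I would take the representative $D(x)$ of $[g]$ and apply right-strongness to obtain $a_1\in[a]$ with $a_1\cdot D(x)$ existing in $P$; then $a_1\cdot D(x)=a_1$ by (C4) in $P$, so $[a]\cdot[g] = [a_1] = [a]$. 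The delicate point is the uniqueness clause of (C3): a representative of a class in $E$ need not itself be a projection of $P$, so we cannot directly invoke uniqueness of the left identity in $P$, and must instead thread together right-strongness, Lemma \ref{2p3}, and the fact that $\delta$ commutes with $D$.
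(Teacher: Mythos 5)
Your proposal is correct and follows essentially the same route as the paper's proof: right-strongness is used to lift each product in $Q$ to a product in $P$ with the second factor held fixed, after which (C1), (Const2), (Const3) and (C4) are pulled back from $P$, with Lemma \ref{2p3} and the fact that $\delta$ respects $D$ settling the uniqueness clause of (C3). The only (harmless) difference is that the paper first identifies the right identities of $Q$ as exactly the classes $[e]$ with $e\in D(P)$ and then argues with a projection representative, whereas you work directly with an arbitrary representative of a class in $E$.
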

\begin{proof}
Suppose $\delta$ is a right strong congruence on $P$, with $[x]$ the $\delta$-class in $Q$ containing $x\in P$, as usual.

Suppose $[a]\cdot([b]\cdot[c])$ exists.  Then there exist $b_1,d$ such that $b_1\in [b]$ and $b_1\cdot c$ exists, and also $a_1\in [a]$ for which $a_1\cdot (b_1\cdot c)$ exists.  So $(a_1\cdot b_1)\cdot c$ exists and equals $a_1\cdot (b_1\cdot c)$. So $([a]\cdot[b])\cdot[c]$ exists, and since it overlaps with $[a]\cdot([b]\cdot[c])$, they are equal.  So (C1) holds.

Suppose $[a]\cdot [b]$ and $[b]\cdot [c]$ exist.  Then there is $b_1\in P$ for which $[b]=[b_1]$ and $b_1\cdot c$ exists, and then there is $a_1\in P$ for which $[a_1]=[a]$ and $a_1\cdot b_1$ exists.  Hence $a_1\cdot (b_1\cdot c)$ exists, so $[a]\cdot ([b]\cdot[c])$ exists as well.  So (Const2) holds.  

For $e\in D(P)$, $[e]\cdot[e]=[e\cdot e]=[e]$, and if $[s]\cdot[e]$ exists then $s_1\cdot e$ exists for some $s_1$ such that $[s]=[s_1]$, so $[s]\cdot[e]=[s]$. So $[e]$ is a right identity in $Q$.  Conversely, if $[s]$ is a right identity in $Q$, then since $[D(s)]\cdot [s]=[D(s)\cdot s]=[s]$ exists, it must equal $[D(s)]$, so $[s]=[D(s)]$.  So the right identities in $Q$ are precisely the $[e]$ where $e$ is a right identity in $P$.

Now $D([a])\cdot[a]=[D(a)]\cdot[a]=[D(a)\cdot a]=[a]$ since $D(a)\cdot a$ exists.  If also $[e]\cdot [a]$ exists and equals $[a]$, then $x\cdot a$ exists and $x\cdot a\ \delta\ a$ for some $x\in [e]$, so $D(a)=D(x\cdot a)=D(x)\ \delta\ D(e)=e$, and so $[a]=[e]$.  Hence (Const3) holds.  So $Q=P/\delta$ is a constellation.  

If $P$ is composable then for all $[s]\in Q$ (where $s\in P$), there is $t\in P$ such that $s\cdot t$ exists, so $[s]\cdot [t]$ exists also, and so $Q$ is composable.
\end{proof}

In particular, every quotient based on a right strong congruence on a constellation is a constellation, although the converse is false.  Consider the partially ordered set $Q=\{e,f,g\}$ in which $e\leq f$ is the only non-reflexive relation between elements.  Viewed as a constellation, $Q$ has a non-right strong congruence $\delta$ such that the associated partition is $\{e\}, \{f,g\}$, and clearly $Q/\delta$ is a constellation even though $\delta$ is not right strong: $g\in [f]$ and $e\cdot f$ exists, yet $e\cdot g$ does not exist.  

Another important property that a congruence $\delta$ on a constellation $P$ can have is that it {\em separates projections}: for all $e,f\in D(P)$, if $e\ \delta\ f$ then $e=f$.  A congruence on a category, as it is traditionally defined, is nothing but a projection-separating congruence on its constellation reduct that also respects the range operation.  Note that if $P$ is normal and $\delta$ is a strong congruence on $P$ then for all $e,f\in P$, if $[e]=[f]$ where $e,f\in D(P)$, then $[e]\cdot[f]$ and $[f]\cdot [e]$ exist, so $e\cdot f$ and $f\cdot e$ exist in $P$, so by normality of $P$, $e=f$.  Of course $\delta$ is trivially right strong.  In general we have the following. 

\begin{pro}  \label{seproj}
Let $P$ be a constellation, $\delta$ a congruence on $P$ that separates projections.  
Then $\delta$ is right strong.
\end{pro}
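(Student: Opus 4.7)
The plan is to take $s_1 = s$ itself in the definition of right strong, exploiting the fact that projection separation forces the $\delta$-class of any element to have a constant value of $D$. The work is almost entirely bookkeeping once one identifies the right lemma to invoke, namely Lemma \ref{2p3}, which converts existence of $s\cdot t$ to existence of $s\cdot D(t)$ and back.

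First I would record the consequence of the definition of congruence: whenever $x\ \delta\ y$ in a constellation $P$, the clause ``$D(s_1)\ \delta\ D(s_2)$'' in the definition (applied unconditionally with $s_1=x,s_2=y$) yields $D(x)\ \delta\ D(y)$. Since $D(x),D(y)\in D(P)$ and $\delta$ separates projections by hypothesis, this forces $D(x)=D(y)$. In particular, $D$ is constant on every $\delta$-class.

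Now to verify right strongness: suppose $s\cdot t$ exists in $P$ and $t_1\ \delta\ t$. By the previous paragraph, $D(t_1)=D(t)$. Lemma \ref{2p3} tells us that $s\cdot t$ exists if and only if $s\cdot D(t)$ exists, so $s\cdot D(t_1)=s\cdot D(t)$ exists, and a second application of Lemma \ref{2p3} (in the converse direction) yields that $s\cdot t_1$ exists. Taking $s_1=s$ (which certainly satisfies $s_1\ \delta\ s$ by reflexivity) gives the witness demanded by the definition of right strong.

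There is no serious obstacle: the whole content of the proposition is the observation that the $D$-clause in the definition of congruence, combined with projection separation, reduces the problem from $t$ to $D(t)$, at which point Lemma \ref{2p3} makes the conclusion automatic. Consequently the argument should fit in just a few lines.
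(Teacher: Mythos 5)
Your proof is correct and takes essentially the same route as the paper's: both arguments reduce the question to projections by noting that the $D$-clause of the congruence definition together with projection separation forces $D$ to be constant on $\delta$-classes, and then apply Lemma \ref{2p3} twice. The only (immaterial) difference is that you verify the second, equivalent formulation of right strongness with the witness $s_1=s$, whereas the paper verifies the first formulation by extracting $s_1,t_1$ from the existence of $[s]\cdot[t]$ in $P/\delta$ and showing $s_1\cdot t$ exists.
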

\begin{proof}
If $[s]\cdot [t]$ exists then there are $s_1\ \delta\ s$ and $t_1\ \delta\ t$ such that $s_1\cdot t_1$ exists, so $s_1\cdot D(t_1)$ exists.  But $D(t_1)\ \delta\ D(t)$, so $D(t_1)=D(t)$, and so $s_1\cdot D(t)$ exists, so $s_1\cdot t$ exists.  (This uses Lemma \ref{2p3} twice.) 
\end{proof}

We have the following useful consequence of Propositions \ref{rsquot} and \ref{seproj}.

\begin{cor}  \label{sepquot}
Let $P$ be a constellation, $\delta$ a congruence on $P$ that separates projections.  
Then $Q=P/\delta$ is a constellation for which $D(Q)=\{[e]\mid e\in D(P)\}$, and $Q$ is composable if $P$ is.
\end{cor}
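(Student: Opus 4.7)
The plan is essentially to invoke the two immediately preceding propositions in sequence, since the corollary is stated as a combined consequence of them. First I would note that since $\delta$ separates projections, Proposition \ref{seproj} applies directly and yields that $\delta$ is right strong. Once that is established, the hypotheses of Proposition \ref{rsquot} are satisfied, and that proposition gives all three conclusions of the corollary in one stroke: that $Q = P/\delta$ is a constellation, that its projection class is exactly $\{[e]\mid e\in D(P)\}$, and that composability of $P$ transfers to $Q$.

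So the proof would be a single line chaining the two invocations, with no new argument required. There is no genuine obstacle here; the real work has been done in Propositions \ref{rsquot} and \ref{seproj}. The only thing worth flagging is that one should not need to verify independently that $[e]$ for $e\in D(P)$ exhausts the right identities of $Q$ — that is already part of the conclusion of Proposition \ref{rsquot}, where the argument uses $[D(s)]\cdot[s]=[s]$ together with the characterisation of right identities via (Const3). Similarly composability transfers automatically: for any $[s]\in Q$ pick $t\in P$ with $s\cdot t$ defined in $P$, then $[s]\cdot[t]$ is defined in $Q$.

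In summary, the proof proposal is simply: apply Proposition \ref{seproj} to conclude $\delta$ is right strong, then apply Proposition \ref{rsquot} to obtain the three listed conclusions.
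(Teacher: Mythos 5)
Your proposal is correct and is exactly the paper's intended argument: the corollary is stated as an immediate consequence of Propositions \ref{rsquot} and \ref{seproj}, chained in precisely the order you describe. No further comment is needed.
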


The diagonal relation $\triangle$ on a constellation $P$ is a strong and projection-separating congruence, the {\em trivial congruence} on $P$.  Obviously $P/\triangle\cong P$.

Consider the full congruence $\triangledown$ on the constellation $P=\{e,f\}$ in which only the products $e\cdot e$ and $f\cdot f$ exist.  This is right strong but neither strong nor projection-separating. 

%\begin{pro}
%If $\delta$ is a strong congruence on the constellation $P$, then $P/\delta$ is a constellation.
%\end{pro}
%\begin{proof}
%If $[x]\cdot [y]$ exists in $P/\delta$ if and only if $x\cdot y$ exists in $P$.  Then (Const1) and (Const2) are immediate for $P/\delta$.  Also, if $[s]$ is a right identity in $P/\delta$, then the existence of $[x]\cdot [s]$ implies that it equals $[x]$.  So in particular, since $D(s)\cdot s$ exists, we have that $[s]=[D(s)\cdot s]=[D(s]\cdot [s]=[D(s)]$, and so $s\ \delta\ D(s)$.  Hence $[s]=[e]$ for some $e\in D(P)$; conversely, every such element $[e]$ is obviously a right identity.  Of course $D([s])\cdot [s]=[D(s)]\cdot [s]=[D(s)\cdot s]=[s]$; if also $[e]\cdot [s]=[s]$ then $e\cdot s\ \delta\ s$, so $e=D(e\cdot s)\ \delta\ D(s)$, and so $[e]=[D(s)]$.  So (Const3) is satisfied.
%\end{proof}

As noted earlier, the definition of a radiant between constellations is simply the definition of homomorphism of partial algebras as in \cite{gratzer} applied to constellations, and use of the term ``strong" here is consistent with usage  in the general setting of \cite{gratzer} also.  A further special type of homomorphism covered in \cite{gratzer} applies to constellations as follows: a radiant $\rho:P\rightarrow Q$ is {\em full} if for all $s,t\in P$ for which $(s\rho)\cdot (t\rho)$ exists and is in the image of $\rho$, there are $s\pr,t\pr\in P$ such that $s\rho=s\pr\rho$, $t\rho=t\pr\rho$ and $s\pr\cdot t\pr$ exists in $P$.  The next result again follows from the general theory of partial algebras and appears in \cite{gratzer}.

\begin{pro}  \label{natmap}
Suppose $P$ is a constellation having congruence $\delta$ such that $P/\delta$ is a constellation.  Then the natural map $P\rightarrow P/\delta$ is a full radiant.
\end{pro}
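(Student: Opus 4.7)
The statement splits into two claims about the natural map $\rho: P \to P/\delta$ given by $s \mapsto [s]$: first, that $\rho$ is a radiant, and second, that it is full. Neither should require anything beyond unpacking the definitions of the quotient product and of $D$ on $P/\delta$, since the hypothesis already guarantees $P/\delta$ is a constellation.

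For the radiant conditions, suppose $s\cdot t$ exists in $P$. By the definition of the product on $P/\delta$, the class $[s\cdot t]$ serves as a value for $[s]\cdot[t]$ (take $s_1=s$, $t_1=t$ in the definition), so $[s]\cdot[t]$ exists and equals $[s\cdot t]$; translating, $s\rho\cdot t\rho$ exists and equals $(s\cdot t)\rho$. The projection condition is even more direct: by definition of $D$ on $P/\delta$, we have $D(s\rho)=D([s])=[D(s)]=D(s)\rho$ for every $s\in P$.

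For fullness, recall the requirement: whenever $s,t\in P$ are such that $(s\rho)\cdot(t\rho)$ exists in $P/\delta$ and lies in the image of $\rho$, there must exist $s',t'\in P$ with $s\rho=s'\rho$, $t\rho=t'\rho$, and $s'\cdot t'$ defined in $P$. Since $\rho$ is surjective, the ``image of $\rho$'' clause is vacuous, so I only need to handle the case where $[s]\cdot[t]$ is defined in $P/\delta$. But by the very definition of the quotient product, $[s]\cdot[t]$ being defined means there exist $s_1\in[s]$ and $t_1\in[t]$ with $s_1\cdot t_1$ defined in $P$; setting $s'=s_1$, $t'=t_1$ gives exactly what is needed.

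There is no substantive obstacle: fullness of the natural map is built into the definition of the quotient operation in the constellation setting, much as it is for any partial algebra (which is why the result is attributed to the general theory in \cite{gratzer}). The entire proof is bookkeeping to confirm that the quotient-product definition and the definition of $D$ on $P/\delta$ give exactly the conditions for a full radiant.
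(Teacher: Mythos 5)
Your proof is correct, and it simply carries out the definition-unpacking that the paper itself defers to the general theory of partial algebras in \cite{gratzer} rather than writing out. Both radiant conditions and fullness follow exactly as you say from the definitions of the quotient product and of $D$ on $P/\delta$ (with well-definedness of the quotient product already asserted by the paper), so there is nothing to add.
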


There is a converse to this; see page 98 of \cite{gratzer}, where it is noted that the Homomorphism Theorem carries over to partial algebras in the following way.  

\begin{pro}  \label{semiquot}
Let $P_1,P_2$ be constellations.  If the surjective radiant $\rho:P_1\rightarrow P_2$ is full then $P_1/\ker(\rho)\cong P_2$. 
\end{pro}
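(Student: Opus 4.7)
The plan is to define the canonical map $\bar\rho:P_1/\ker(\rho)\rightarrow P_2$ by $[s]\mapsto s\rho$ and show it is an isomorphism of constellations, i.e., a surjective embedding in the sense of Section \ref{axeg}. Well-definedness and injectivity are immediate from the definition of $\ker(\rho)$ as the equivalence relation with $s\,\ker(\rho)\,t\iff s\rho=t\rho$; surjectivity follows at once from the surjectivity of $\rho$. That $\bar\rho$ is a radiant is a routine check: if $[s]\cdot[t]$ exists in $P_1/\ker(\rho)$, then by the definition of the quotient product there are $s_1\in[s]$ and $t_1\in[t]$ with $s_1\cdot t_1$ defined in $P_1$, whence $\bar\rho([s])\cdot\bar\rho([t])=(s_1\rho)\cdot(t_1\rho)=(s_1\cdot t_1)\rho=\bar\rho([s]\cdot[t])$; and $D(\bar\rho([s]))=D(s\rho)=D(s)\rho=\bar\rho([D(s)])=\bar\rho(D([s]))$.

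The one non-routine point, and the only place where the hypothesis of fullness is needed, is verifying that $\bar\rho$ is \emph{strong}. Suppose $\bar\rho([s])\cdot\bar\rho([t])$ exists in $P_2$, i.e.\ $(s\rho)\cdot(t\rho)$ is defined. Because $\rho$ is surjective this product lies in the image of $\rho$, so fullness of $\rho$ supplies $s',t'\in P_1$ with $s'\rho=s\rho$, $t'\rho=t\rho$, and $s'\cdot t'$ defined in $P_1$. But then $[s']=[s]$ and $[t']=[t]$, and the existence of $s'\cdot t'$ in $P_1$ directly witnesses the existence of $[s]\cdot[t]=[s'\cdot t']$ in the quotient. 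Hence $\bar\rho$ is strong.

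The hard part is really just isolating why fullness is exactly the right hypothesis: without it, one can imagine $(s\rho)\cdot(t\rho)$ being defined in $P_2$ only because some other preimages $s_1\in[s]$, $t_1\in[t]$ multiply in $P_1$, which would suffice for $[s]\cdot[t]$ to exist in the quotient but would not necessarily come for free. Fullness is tailored precisely to ensure that whenever the image product is in the image of $\rho$, representatives within the given kernel classes can be chosen so that the preimage product is defined. Together with the preceding routine observations, this shows that $\bar\rho$ is a bijective strong radiant, and hence by the definitions in Section \ref{axeg} a constellation isomorphism, as required.
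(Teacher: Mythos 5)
Your proof is correct. The paper itself gives no argument for this proposition: it simply cites page 98 of Gr\"atzer's \emph{Universal Algebra}, where the Homomorphism Theorem for partial algebras is stated, so your contribution is to unpack that citation into an explicit verification, which you do accurately. Your construction of $\bar\rho:[s]\mapsto s\rho$ is exactly the map the general theory produces; well-definedness, injectivity and surjectivity are handled correctly, the radiant check correctly uses the definition of the quotient product via representatives $s_1\in[s]$, $t_1\in[t]$, and you correctly identify strongness of $\bar\rho$ as the one point where fullness enters, observing that surjectivity of $\rho$ makes the ``in the image'' clause of fullness automatic. One small point worth making explicit (it is implicit in your conclusion): the statement presupposes that $P_1/\ker(\rho)$ makes sense as a constellation, and since kernels of radiants need only be congruences (not right strong ones), this is not automatic from the paper's quotient results; however, your bijective strong homomorphism onto the constellation $P_2$ itself establishes that the quotient partial algebra satisfies the constellation axioms, so nothing is missing. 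The trade-off between the two approaches is the usual one: the paper's citation is shorter and situates the result in the general theory of partial algebras, while your direct argument is self-contained and makes visible precisely which hypothesis (fullness) does the work and where.
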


A radiant $\rho:P\rightarrow Q$ {\em separates projections} if for all $e,f\in D(P)$, if $e\rho=f\rho$ then $e=f$.  The following three results are clear.

\begin{pro}  \label{sepcorresp}
If $\rho:P_1\rightarrow P_2$ is a full surjective radiant that separates projections then $\ker(\rho)$ separates projections.  If $\delta$ is a congruence on the constellation $P$ that separates projections (whence $P/\delta$ is a constellation by Corollary \ref{sepquot}) then the natural map $\nu:P\rightarrow P/\delta$ is a full surjective radiant that separates projections.
\end{pro}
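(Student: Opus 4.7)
The proof is essentially a direct unwinding of definitions once one notices that ``separates projections'' for a congruence and ``separates projections'' for a radiant are matched by the kernel/natural-map correspondence. There is no real obstacle; the bulk of the work was already done in Corollary \ref{sepquot} and Proposition \ref{natmap}.

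For the first implication, suppose $\rho:P_1\rightarrow P_2$ is a full surjective radiant that separates projections, and take $e,f\in D(P_1)$ with $(e,f)\in\ker(\rho)$, meaning $e\rho=f\rho$. Since $\rho$ separates projections, this forces $e=f$, so $\ker(\rho)$ separates projections. (Note that neither fullness nor surjectivity is actually needed for this direction; the conditions are imposed so that the statement reads naturally as a bijective correspondence when paired with the converse.)

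For the second implication, suppose $\delta$ is a congruence on $P$ separating projections. By Corollary \ref{sepquot}, $P/\delta$ is a constellation with $D(P/\delta)=\{[e]\mid e\in D(P)\}$, so the natural map $\nu:P\rightarrow P/\delta$ given by $x\mapsto [x]$ is a map between constellations. It is surjective by construction, and by Proposition \ref{natmap} it is a full radiant. To see that $\nu$ separates projections, let $e,f\in D(P)$ with $e\nu=f\nu$; then $[e]=[f]$, i.e.\ $e\ \delta\ f$, and because $\delta$ separates projections one concludes $e=f$.

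Since both halves reduce to the observation that $e\nu=f\nu \iff e\ \delta\ f$ and that $e\rho=f\rho \iff (e,f)\in \ker(\rho)$, the result follows without further calculation; any difficulty would lie in having already established that $P/\delta$ is a constellation (handled by Corollary \ref{sepquot}) and that the natural map is full (handled by Proposition \ref{natmap}).
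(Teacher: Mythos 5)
Your proof is correct and is exactly the definitional unwinding the paper has in mind (the paper simply declares this result ``clear'' and gives no explicit argument), with the substantive inputs being Corollary \ref{sepquot} and Proposition \ref{natmap} as you cite. Your side remark that fullness and surjectivity are not needed for the first implication is also accurate.
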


\begin{pro}  \label{maxprojsep}
There is a largest projection-separating congruence $\delta$ on every constellation $P$, given by $s\ \delta\ t$ if and only if $D(s)=D(t)$, so that $D(P/\delta)=P/\delta$, and $[e]\lesssim [f]$ in $D(P/\delta)$ if and only if there exists $s\in P$ for which $D(s)=e$ and $s\cdot f$ exists (so if $P$ is a category, there is $s\in P$ such that $D(s)=e$ and $R(s)=f$).
\end{pro}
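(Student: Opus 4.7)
The plan is to define $\delta$ by $s\,\delta\,t \iff D(s)=D(t)$ and then verify in turn that $\delta$ is a projection-separating congruence, that it is the largest such, that the projections of $P/\delta$ fill the whole quotient, and finally to unpack the description of $\lesssim$.

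For the congruence property, given $s_1\,\delta\,s_2$ and $t_1\,\delta\,t_2$ with both products $s_i\cdot t_i$ defined, I would apply Lemma \ref{2p3} to get $D(s_i\cdot t_i)=D(s_i)$; the hypothesis $D(s_1)=D(s_2)$ then yields $(s_1\cdot t_1)\,\delta\,(s_2\cdot t_2)$. The $D$-clause $D(s_1)\,\delta\,D(s_2)$ is immediate because $D$ fixes projections. Projection-separation is instant: for $e,f\in D(P)$, $e\,\delta\,f$ gives $e=D(e)=D(f)=f$. For maximality I take any projection-separating congruence $\sigma$; applying its $D$-clause to $s\,\sigma\,t$ forces $D(s)\,\sigma\,D(t)$, and separation makes these equal, so $s\,\delta\,t$, showing $\sigma\subseteq\delta$.

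Next, since $\delta$ separates projections, Corollary \ref{sepquot} gives that $P/\delta$ is a constellation with $D(P/\delta)=\{[e]\mid e\in D(P)\}$. But every $s\in P$ satisfies $D(D(s))=D(s)$, so $s\,\delta\,D(s)$ and hence $[s]=[D(s)]$. Consequently every class is already a projection class, and $D(P/\delta)=P/\delta$.

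Finally, for the quasiorder, $[e]\lesssim[f]$ means $[e]\cdot[f]$ exists in $P/\delta$, i.e.\ some $s\in[e]$ and $t\in[f]$ admit $s\cdot t$ in $P$. Since $e$ and $f$ are projections, membership in the respective classes just reads $D(s)=e$ and $D(t)=f$. The one subtlety I expect is collapsing the quantifier over $t$: by Lemma \ref{2p3}, $s\cdot t$ exists iff $s\cdot D(t)=s\cdot f$ exists, so the condition reduces to the existence of some $s$ with $D(s)=e$ for which $s\cdot f$ exists. The categorial addendum then follows because in a category $s\cdot f$ (with $f$ an identity) exists iff $R(s)=f$.
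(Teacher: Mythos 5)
Your proof is correct and is exactly the natural verification the paper has in mind (the paper states this result without proof, as one of three it declares ``clear''): defining $\delta$ via equality of domains, using Lemma \ref{2p3} for the congruence property and for collapsing the quantifier over $t$ in the description of $\lesssim$, invoking Corollary \ref{sepquot} for the quotient, and noting $s\ \delta\ D(s)$ to get $D(P/\delta)=P/\delta$. No gaps.
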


For example, the largest projection-separating congruence on $SET$ gives a quotient that is the full quasiorder on all sets since for any two sets $X,Y$, there is a function with domain $X$ that maps into $Y$.

\begin{cor}  \label{maxsimpproj}
The constellation $P$ has no non-trivial projection-separating congruences if and only if $D(P)=P$.
\end{cor}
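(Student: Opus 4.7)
The plan is to deduce this directly from Proposition \ref{maxprojsep}, which gives the largest projection-separating congruence on $P$ as $\delta$ defined by $s\,\delta\,t \iff D(s)=D(t)$. Since every projection-separating congruence is contained in $\delta$, the claim that $P$ has \emph{no} non-trivial projection-separating congruences is equivalent to $\delta = \triangle$, the diagonal relation. So it suffices to show that $\delta = \triangle$ if and only if $D(P)=P$.

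A small preliminary observation I would record first: for any projection $e \in D(P)$, we have $D(e)=e$. Indeed, by (C3), $D(e)\cdot e = e$, while by (C4) (applied with $a=D(e)$ and $g=e$), $D(e)\cdot e = D(e)$. Equating these gives $e=D(e)$. This tells us that every element of $D(P)$ is a fixed point of $D$.

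For the forward direction, I assume $D(P)=P$. Then every $s\in P$ is a projection, so $D(s)=s$ by the observation above. Hence $D(s)=D(t)$ forces $s=t$, so $\delta = \triangle$, and no non-trivial projection-separating congruence exists. For the converse, I assume $D(P) \ne P$ and pick $s \in P \setminus D(P)$. Since $D(s) \in D(P)$ we have $s \neq D(s)$, yet $D(D(s))=D(s)$ by the observation, so the pair $(s,D(s))$ lies in $\delta$ and witnesses that $\delta$ is non-trivial.

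There is no real obstacle here once Proposition \ref{maxprojsep} is in hand; the entire content is the reduction to understanding when the explicit $\delta$ is diagonal, and the only micro-step requiring care is the identity $D(e)=e$ for projections, which is a two-line application of (C3) and (C4).
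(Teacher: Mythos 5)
Your proof is correct and follows exactly the route the paper intends: the paper states this corollary without proof as a clear consequence of Proposition \ref{maxprojsep}, and your argument is precisely the natural unpacking of that, reducing the claim to whether the largest projection-separating congruence $\delta$ (given by $D(s)=D(t)$) equals the diagonal, via the standard identity $D(e)=e$ for projections.
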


There is a version of the correspondence theorem for constellations with respect to projection-separating congruences.  For $\delta$ an equivalence relation on the constellation $P$, denote by $[s]_{\delta}$ the $\delta$-class containing $s\in P$ (a notation needed below since more than one relation is being considered).

\begin{lem}  \label{seprad}
For $P$ a constellation and $\delta,\gamma$ two projection-separating congruences on $P$ for which $\delta\subseteq \gamma$, the mapping $\rho:P/\delta\rightarrow P/\gamma$ given by $[s]_{\delta}\rho=[s]_{\gamma}$ is a full projection-separating surjective radiant.  \end{lem}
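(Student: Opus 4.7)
The plan is to verify in turn the five required properties of $\rho$: well-definedness, the radiant axioms, surjectivity, projection-separation, and fullness. Throughout I would use the fact that, by Proposition \ref{seproj}, both $\delta$ and $\gamma$ are right strong (since each separates projections), so by Corollary \ref{sepquot} both $P/\delta$ and $P/\gamma$ are constellations whose projections are exactly the classes of projections of $P$.

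Well-definedness of $\rho$ is immediate from $\delta\subseteq\gamma$: if $[s]_\delta=[t]_\delta$ then $s\,\gamma\,t$, so $[s]_\gamma=[t]_\gamma$. Surjectivity is clear. For the radiant conditions, suppose $[s]_\delta\cdot[t]_\delta$ exists in $P/\delta$; pick witnesses $s_1\in[s]_\delta,\,t_1\in[t]_\delta$ with $s_1\cdot t_1$ existing in $P$. Then $[s]_\gamma\cdot[t]_\gamma=[s_1]_\gamma\cdot[t_1]_\gamma=[s_1\cdot t_1]_\gamma=([s]_\delta\cdot[t]_\delta)\rho$, as required. Compatibility with $D$ is immediate since $D([s]_\delta)\rho=[D(s)]_\gamma=D([s]_\gamma)$. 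Projection-separation is also immediate: if $[e]_\delta\rho=[f]_\delta\rho$ with $e,f\in D(P)$ then $e\,\gamma\,f$, hence $e=f$ because $\gamma$ separates projections, so $[e]_\delta=[f]_\delta$.

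The only step requiring an extra idea is fullness, and this is where I expect the main obstacle to lie. Since $\rho$ is surjective, the ``image of $\rho$'' clause in the definition of fullness is automatic, so I must show that whenever $[s]_\delta\rho\cdot[t]_\delta\rho$ exists in $P/\gamma$, there are classes $s',t'\in P/\delta$ with $s'\rho=[s]_\delta\rho,\,t'\rho=[t]_\delta\rho$ and $s'\cdot t'$ existing in $P/\delta$. Applying right-strongness of $\gamma$ to the existing product $[s]_\gamma\cdot[t]_\gamma$ in $P/\gamma$ yields some $s_1$ with $s_1\,\gamma\,s$ and $s_1\cdot t$ existing in $P$. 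Then in $P/\delta$ the product $[s_1]_\delta\cdot[t]_\delta$ exists by the very definition of the quotient operation, and $[s_1]_\delta\rho=[s_1]_\gamma=[s]_\gamma=[s]_\delta\rho$, so $s'=[s_1]_\delta$ and $t'=[t]_\delta$ witness fullness. This pulling-back of a product across the coarser congruence $\gamma$ into a product representative living over $\delta$ is the only delicate point in the argument; once it is done the lemma is complete.
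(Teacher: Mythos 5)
Your proof is correct, and it is worth noting how it relates to the paper's. The paper disposes of everything except projection-separation in one line, by citing the general theory of partial algebras (Gr\"{a}tzer) for the claim that $\rho$ is a full surjective partial algebra homomorphism; the only part it verifies by hand is projection-separation, and your argument for that step is identical to the paper's. What you do differently is supply an explicit, self-contained verification of the part the paper delegates, and your treatment of fullness is the genuinely substantive addition: you observe that surjectivity makes the ``in the image of $\rho$'' clause vacuous, and then pull a product in $P/\gamma$ back to a product in $P/\delta$ by invoking right-strongness of $\gamma$ (available via Proposition \ref{seproj}) to produce a representative $s_1\,\gamma\,s$ with $s_1\cdot t$ defined in $P$, whence $[s_1]_{\delta}\cdot[t]_{\delta}$ exists and maps to $[s]_{\gamma}\cdot[t]_{\gamma}$. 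This is exactly the right mechanism, and it keeps the lemma independent of the external reference at the cost of a slightly longer proof; the paper's version buys brevity by leaning on the cited machinery. One cosmetic remark: your radiant verification implicitly uses that $P/\delta$ and $P/\gamma$ are genuinely constellations with $D([s]_{\delta})=[D(s)]_{\delta}$, which you correctly ground in Proposition \ref{seproj} and Corollary \ref{sepquot} at the outset, so nothing is missing.
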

\begin{proof}
%Clearly $\rho$ is well-defined.  If the product $[a]_{\delta}\cdot [b]_{\delta}$ exists in $P/\delta$ then there exists $a_1\in [a]_{\delta}$ such that $a_1\cdot b$ exists in $P$ by Proposition \ref{seproj}, hence also $a_1\gamma a$ with $a_1\cdot b$ existing, so $[a]_{\gamma}\cdot [b]_{\gamma}$ exists in $P/\gamma$.  Then 
%$$([a]_{\delta}\cdot [b]_{\delta})\rho=[a_1\cdot b]_{\delta}\rho=[a_1\cdot b]_{\gamma}=[a]_{\gamma}\cdot [b]_{\gamma}.$$
%Moreover $$D([a]_{\delta})\rho=[D(a)]_{\delta}\rho=[D(a)]_{\gamma}=D([a]_{\gamma}]).$$
%So $\rho$ is a radiant, and is obviously surjective. 
%
%Suppose $([a]_{\delta}\rho)\cdot ([b]_{\delta}\rho)$ exists.  Then $[a]_{\gamma}\cdot[b]_{\gamma}$ exists, and so there exists $a_1\gamma a$ such that $a_1\cdot b$ exists, so $[a_1]_{\gamma}=[a]_{\gamma}$, so $[a_1]_{\delta}\rho=[a]_{\delta}\rho$ and so $[a_1]_{\delta}\cdot [b]_{\delta}$ exists.  So $\rho$ is full.
% The above is redundant on account of the next line.
That $\rho$ is a full surjective partial algebra homomorphism follows from the general theory of partial algebras.  If $\gamma$ and hence $\delta$ are congruences that are projection-separating, then they are right strong and so $P/\delta$ and $P/\gamma$ are constellations in which the domain elements are of the form $[e]_{\delta}$ (or $[e]_{\gamma}$) where $e\in D(P)$ by Proposition \ref{rsquot}.  If $[e]_{\delta}\rho=[f]_{\delta}\rho$ for $e,f\in D(P)$, then $[e]_{\gamma}=[f]_{\gamma}$ so $e=f$ since $\gamma$ is projection-separating, and so $[e]_{\delta}=[f]_{\delta}$.
\end{proof}

\begin{pro}  \label{trans}
Let $P$ be a constellation with $\delta$ a projection-separating congruence on it, and let $Q=P/\delta$.  Then for any projection-separating congruence $\epsilon$ on $Q$, there exists a projection-separating congruence $\gamma$ on $P$ that contains $\delta$ and such that $P/\gamma\cong Q/\epsilon$, given by $s\ \gamma\ t$ if and only if $[s]_{\delta}\ \epsilon\ [t]_{\delta}$.  Conversely, if $\gamma$ is a projection-separating congruence on $P$ containing $\delta$, then there is a projection-separating congruence $\epsilon$ on $Q$ for which $P/\gamma\cong Q/\epsilon$, given by $[s]_{\delta}\ \epsilon\ [t]_{\delta}$ if and only if $s\ \gamma\ t$.
\end{pro}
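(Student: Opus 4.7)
The plan is to treat this as the standard correspondence theorem for projection-separating congruences, using the natural full surjective radiant $\nu\colon P \to Q = P/\delta$ (full by Proposition \ref{natmap}, surjective and projection-separating by Proposition \ref{sepcorresp}) as the bridge. The two constructions $\gamma \leftrightarrow \epsilon$ in the statement are the pullback and pushforward along $\nu$, so I verify that each yields a projection-separating congruence, and then obtain the isomorphism of quotients by an application of the Homomorphism Theorem.

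For the forward direction, given $\epsilon$ on $Q$, the defined $\gamma$ is immediately an equivalence containing $\delta$, since any two $\delta$-related elements have the same $\nu$-image and $\epsilon$ is reflexive. To check $\gamma$ is a congruence, suppose $s_1\ \gamma\ s_2$, $t_1\ \gamma\ t_2$ with both $s_1\cdot t_1$ and $s_2\cdot t_2$ existing in $P$; since $\nu$ is a radiant we have $[s_i]_{\delta}\cdot [t_i]_{\delta}=[s_i\cdot t_i]_{\delta}$ in $Q$ for $i=1,2$, and then the congruence property of $\epsilon$ on $Q$ gives $s_1\cdot t_1\ \gamma\ s_2\cdot t_2$; the condition on $D$ is analogous. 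Projection-separation is direct: if $e\ \gamma\ f$ with $e,f\in D(P)$, then $[e]_{\delta}\ \epsilon\ [f]_{\delta}$, and both classes lie in $D(Q)$ by Corollary \ref{sepquot}, so $\epsilon$-separation forces $[e]_{\delta}=[f]_{\delta}$ and then $\delta$-separation forces $e=f$.

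For the converse, given $\gamma\supseteq \delta$ both projection-separating on $P$, the prescribed $\epsilon$ on $Q$ is well-defined precisely because $\delta\subseteq \gamma$: changing representatives within a $\delta$-class keeps elements within the same $\gamma$-class. The remaining verifications that $\epsilon$ is an equivalence, a congruence on $Q$, and projection-separating mirror the forward direction almost verbatim, using the right strongness furnished by projection-separation (Proposition \ref{seproj}) to move between representatives whenever I need to witness the existence of a product in $Q$.

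To obtain $P/\gamma\cong Q/\epsilon$ in both cases simultaneously, I consider the composite $\psi\colon P\to Q/\epsilon$ sending $s\mapsto [[s]_{\delta}]_{\epsilon}$. Each of the two natural maps $P\to Q$ and $Q\to Q/\epsilon$ is a full surjective radiant by Proposition \ref{sepcorresp}, and a brief diagram chase shows that a composite of full surjective radiants is itself full and surjective; hence $\psi$ is a full surjective radiant. Its kernel is, directly from the definition of $\gamma$ (or of $\epsilon$, depending on direction), the congruence $\gamma$, so the Homomorphism Theorem for partial algebras (Proposition \ref{semiquot}) delivers $P/\gamma\cong Q/\epsilon$. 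The main bookkeeping hurdle will be the congruence verification for $\gamma$, where one must track when partial products exist across different representatives; but the right-strongness of projection-separating congruences is tailored precisely to this situation, so nothing deeper than Lemma \ref{2p3} and Proposition \ref{rsquot} should be needed.
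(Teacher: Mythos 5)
Your proposal is correct and follows essentially the same route as the paper: both identify $\gamma$ (respectively $\epsilon$) as the kernel of a full surjective projection-separating radiant built from the natural quotient maps, and then invoke the Homomorphism Theorem for partial algebras (Proposition \ref{semiquot}) to get $P/\gamma\cong Q/\epsilon$. The only cosmetic difference is that you verify the congruence axioms for $\gamma$ and $\epsilon$ by hand, whereas the paper gets these for free from Proposition \ref{sepcorresp} and Lemma \ref{seprad} once the kernel identification is made.
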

\begin{proof}  Let $P,Q,\delta$ be as stated.

Let $\epsilon$ be a projection-separating congruence on $Q$.  Let $\nu_1:P\rightarrow P/\delta$ and $\nu_2:Q\rightarrow Q/\epsilon$ be the natural maps (which are full surjective projection-separating radiants by Proposition \ref{natmap}), and let $\nu:P\rightarrow Q/\epsilon$ be their composite, also a full surjective radiant that separates projections.  Let $\gamma=\ker(\nu)$.  Then $(s,t)\in\gamma$ if and only if $[s]_{\delta}\ \epsilon\ [t]_{\delta}$, so $\delta\subseteq \gamma$, $\gamma$ is a congruence that separates projections by Proposition \ref{sepcorresp}, and $P/\gamma\cong Q/\epsilon$ by Proposition \ref{semiquot}.  

Conversely, suppose $\gamma$ is a projection-separating congruence on $P$, with $\delta\subseteq \gamma$.  We have the natural maps $\nu:P\rightarrow P/\gamma$ and $\nu_1:P\rightarrow P/\delta$.  Since $\delta\subseteq\gamma$, we may define $\rho:P/\delta\rightarrow P/\gamma$  by setting $[s]_{\delta}\: \rho=[s]_{\gamma}$, a full surjective radiant that separates projections by Lemma \ref{seprad}, so $\epsilon=\ker(\rho)$ is a projection-separating congruence on $Q$ such that $Q/\epsilon\cong P/\gamma$ by Proposition \ref{semiquot}. 
\end{proof}

\section{The canonical extension of a constellation to a category}

\subsection{Definition and basic properties}

Let $P$ be a constellation.  Let $\C(P)$ be the structure consisting of the ordered pairs $(s,e)$, where $s\in P$, $e\in D(P)$, and $s\cdot e=s$.  On $\C(P)$, define 
$$(s,e)\circ (t,f)=(s\cdot t,f)$$
providing $e= D(t)$ (in which case $s\cdot t$ exists since both $s\cdot e$ and $e\cdot t$ exist, whence so does $s\cdot (e\cdot t)=s\cdot t$).  Also define $D((s,e))=(D(s),D(s))$ and $R((s,e))=(e,e)$. 

\begin{pro}  \label{CPran}
For any constellation $P$, $(\C(P),\circ,D,R)$ is a category.
\end{pro}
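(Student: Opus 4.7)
The plan is to verify the three category axioms (Cat1)--(Cat3) for $(\C(P),\circ)$ and then confirm that the prescribed $D$ and $R$ coincide with the operations determined by the category structure (as discussed right after the definition of a category). Lemma \ref{2p3} will serve as the workhorse throughout, converting existence questions about $s\cdot t$ to ones about $s\cdot D(t)$, and telling us that $D(s\cdot t)=D(s)$.

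First I would verify that $\circ$ is well-defined on $\C(P)$: given $(s,e),(t,f)\in\C(P)$ with $e=D(t)$, we have $s\cdot e=s\cdot D(t)$ defined (it equals $s$), so $s\cdot t$ exists by Lemma \ref{2p3}; and applying (C1) to the defined product $s\cdot(t\cdot f)=s\cdot t$ yields that $(s\cdot t)\cdot f$ exists and equals $s\cdot t$, so indeed $(s\cdot t,f)\in\C(P)$.

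For (Cat1), with $x=(s,e)$, $y=(t,f)$, $z=(u,g)$: if $x\circ(y\circ z)$ exists, one extracts $f=D(u)$ and $e=D(t\cdot u)$, and then Lemma \ref{2p3} gives $e=D(t)$, so $x\circ y$ is defined; $(x\circ y)\circ z$ is then defined since $f=D(u)$, and equality follows from (C1) applied in $P$ to $s\cdot(t\cdot u)$. The reverse implication is symmetric. Axiom (Cat2) is immediate: from $e=D(t)$, $f=D(u)$ together with $D(t\cdot u)=D(t)$, both $y\circ z$ and $x\circ(y\circ z)$ exist. For (Cat3), I would propose $\{(e,e):e\in D(P)\}$ as the identities (each lies in $\C(P)$ since $e\cdot e=e$); a direct inspection shows $(e,e)$ acts as a two-sided identity wherever the relevant composition is defined. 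Then $(D(s),D(s))\circ(s,e)=(s,e)$ and $(s,e)\circ(e,e)=(s,e)$, establishing (Cat3).

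The identifications $D((s,e))=(D(s),D(s))$ and $R((s,e))=(e,e)$ are then forced by the uniqueness argument given in the paper just after the definition of a category, since $(D(s),D(s))$ and $(e,e)$ are identities for which the relevant composite with $(s,e)$ returns $(s,e)$. The main obstacle, modest as it is, lies in the bookkeeping around partial-product existence in (Cat1); once Lemma \ref{2p3} is deployed systematically, the verification reduces to checking that second coordinates line up and invoking constellation associativity (C1).
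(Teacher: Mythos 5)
Your proposal is correct and follows essentially the same route as the paper's proof: verify (Cat1)--(Cat3) directly, using Lemma \ref{2p3} (via $D(t\cdot u)=D(t)$) to show the existence conditions for the two bracketings coincide, and exhibit the pairs $(e,e)$ as the identities needed for (Cat3). The only differences are cosmetic: you explicitly check that $(s\cdot t,f)$ lands in $\C(P)$ (a point the paper folds into the definition of $\circ$), while the paper additionally characterises \emph{all} identities of $\C(P)$ rather than just exhibiting enough of them and appealing to the uniqueness argument for categories.
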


\begin{proof}
Suppose $(s,e)$ is an identity in $\C(P)$, and suppose $x\cdot s$ exists for some $x\in P$.   Hence $x\cdot D(s)$ exists, and so $(x,D(s))\in \C(P)$.  Now $(x,D(s))\circ (s,e)$ exists, and equals $(x\cdot s,e)$, which must equal $(x,D(s))$ since $(s,e)$ is an identity, so $x\cdot s=x$.  So $s$ is a right identity in $P$, and $e=D(s)=s$.  So $(s,e)=(e,e)$ for some right identity $e$ of $P$.  Conversely, it is easy to check that $(e,e)\in \C(P)$ (where $e$ is a right identity of $P$) is a two-sided identity of $\C(P)$, so such elements are precisely the identities of $\C(P)$.  

For $(x,e),(y,f),(z,g)\in \C(P)$, the following are equivalent: $(x,e)\circ((y,f)\circ (z,g))$ exists; $e=D(y)$ and $f=D(z)$; $((x,e)\circ (y,f))\circ (z,g)$ exists (since $D(y\cdot z)=D(y)$).  Both products are easily seen to equal $(x\cdot (y\cdot z),g)$, so (Cat1) holds.   The second condition is equivalent to $(x,e)\circ (y,f)$ and $(y,f)\circ (z,g)$ existing, proving (Cat2).  Finally, for $(x,e)\in\C(P)$, note that $(D(x),D(x))\circ (x,e)=(x,e)$ and $(x,e)\circ (e,e)=(x,e)$, establishing (Cat3).
\end{proof}

We call $(\C(P),\circ,D,R)$ as in Proposition \ref{CPran} the {\em canonical extension} of the constellation $P$.  It may be viewed as a generalisation of a construction given in \cite{agree} that builds a category from an RC-semigroup satisfying the right congruence condition (a certain type of unary semigroup generalising right restriction semigroups).  Note that if $P$ is small, then $\C(P)$ is also.

If $K$ is a category and $P$ its constellation reduct, then it is easy to see that $K\cong \C(P)$.  This is a special case of the following fact.

\begin{pro}  \label{radim}
For every constellation $P$, the mapping $\rho:\C(P)\rightarrow P$ given by $(s,e)\rho=s$ is a radiant.   If $P$ is composable then $\rho$ is full and surjective, so $P$ is a constellation quotient of $\C(P)$; in particular, if $P$ is categorial, then $P\cong \C(P)$.
\end{pro}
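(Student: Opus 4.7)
The plan is to verify in turn that $\rho$ is a radiant, then under composability that it is surjective and full, then apply Proposition \ref{semiquot} to get the quotient conclusion, and finally sharpen $\rho$ to an isomorphism in the categorial case.

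That $\rho$ is a radiant is essentially immediate from the definition of $\C(P)$: if $(s,e)\circ (t,f)$ exists then $e=D(t)$ and the composite is $(s\cdot t,f)$, so $\rho$ sends it to $s\cdot t=(s,e)\rho\cdot (t,f)\rho$; and $D((s,e))\rho=(D(s),D(s))\rho=D(s)=D((s,e)\rho)$. For surjectivity when $P$ is composable, given $s\in P$ I choose $t$ with $s\cdot t$ defined; Lemma \ref{2p3} then guarantees $s\cdot D(t)$ exists, and (C4) forces $s\cdot D(t)=s$, so $(s,D(t))\in \C(P)$ and $\rho(s,D(t))=s$.

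For fullness, suppose $(s,e),(t,f)\in \C(P)$ are such that $s\cdot t=(s,e)\rho\cdot (t,f)\rho$ exists in $P$. The trick is to replace $e$ by $D(t)$: the same appeal to Lemma \ref{2p3} and (C4) gives $(s,D(t))\in \C(P)$, and then $(s,D(t))\circ (t,f)=(s\cdot t,f)$ is defined in $\C(P)$, with $\rho(s,D(t))=s=\rho(s,e)$ and $\rho(t,f)=t$. Fullness together with surjectivity, via Proposition \ref{semiquot}, then yields $P\cong \C(P)/\ker(\rho)$.

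For the categorial case, Proposition \ref{catunique} supplies a unique $e\in D(P)$ with $s\cdot e$ defined, so each fibre $\rho^{-1}(s)$ is a singleton and $\rho$ is injective; the same uniqueness forces $e=D(t)$ whenever $s\cdot t$ is defined with $(s,e)\in\C(P)$, so $\rho$ is strong as well. Being surjective, injective, and strong, $\rho$ is a constellation isomorphism. The only mildly delicate point in the whole argument is the fullness step: the ``given'' second coordinate $e$ attached to $s$ need not equal $D(t)$, but we are free to swap it out, because Lemma \ref{2p3} together with (C4) lets us move freely between the existence of $s\cdot t$ and that of $s\cdot D(t)$ (with the latter automatically equalling $s$).
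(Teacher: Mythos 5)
Your proof is correct and follows essentially the same route as the paper's: the same direct verification that $\rho$ is a radiant, the same use of Lemma \ref{2p3} (with (C4)) to produce $(s,D(t))\in\C(P)$ for surjectivity and fullness, and the same appeal to Proposition \ref{semiquot} for the quotient claim. The only difference is cosmetic: in the fullness step you reuse the given $(t,f)$ as the second factor, where the paper manufactures a fresh witness $(t,D(u))$, and you spell out strongness in the categorial case where the paper leaves it implicit; both are fine.
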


\begin{proof}
Suppose $(s,e)\circ (t,f)$ exists.  Then trivially $(s,e)\rho\cdot (t,f)\rho=s\cdot t=(s\cdot t,f)\rho=((s,e)\circ (t,f))\rho$ exists.  Moreover $D((s,e))\rho=(D(s),D(s))\rho=D(s)=D((s,e)\rho)$.  So $\rho$ is a radiant.

Now if $P$ is composable, then for $x\in P$ there exists $y\in P$ such that $x\cdot y$ exists.  Then $x\cdot D(y)$ exists by Lemma \ref{2p3}, and so $(x,D(y))\in \C(P)$, with $(x,D(y))\rho=x$, so $\rho$ is surjective.  Suppose now that $(x,e),(y,f)\in \C(P)$ with $(x,e)\rho\cdot (y,f)\rho$ existing.  Then $x\cdot y$ exists, whence so does $x\cdot D(y)$ by Lemma \ref{2p3} and so $(x,D(y))\in \C(P)$.  Let $u\in P$ be such that $y\cdot u$ exists; then $y\cdot D(u)$ exists by Lemma \ref{2p3}, and so $(y,e)\in \C(P)$ where $e=D(u)$, and $(x,D(y))\cdot (y,e)$ exists, with $(x,D(y))\rho=x$ and $(y,e)\rho=y$.  So $\rho$ is full.  Hence by Proposition \ref{semiquot}, $\C(P)/\ker(\rho)\cong P$. If $P$ is categorial, then $\rho$ is injective and hence is an isomorphism, since there is precisely one $e\in D(P)$ for which $s\cdot e$ exists.
\end{proof}

\begin{pro}  \label{canonfunc}
Let $\rho:P\rightarrow Q$ be a radiant between the
constellations $P$ and $Q$.  Then $F_{\rho}:\C(P)\rightarrow \C(Q)$ given by $(s,e)F_{\rho}=(s\rho,e\rho)$ is a functor.
\end{pro}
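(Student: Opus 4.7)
The plan is to observe that by Proposition \ref{radfunct}, a radiant between categories is the same thing as a functor, so since $\C(P)$ and $\C(Q)$ are both categories (by Proposition \ref{CPran}), it suffices to verify that $F_\rho$ is a well-defined radiant in the sense of the definition in Section \ref{congen}.

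First I would check well-definedness. Given $(s,e)\in \C(P)$, meaning $s\in P$, $e\in D(P)$ and $s\cdot e=s$, I need to show $(s\rho,e\rho)\in \C(Q)$. Since $e\in D(P)$ we have $e=D(e)$, so $e\rho=D(e)\rho=D(e\rho)\in D(Q)$ by the definition of radiant. Moreover, since $s\cdot e$ exists in $P$, the radiant $\rho$ gives $s\rho\cdot e\rho=(s\cdot e)\rho=s\rho$ in $Q$, as required.

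Next I would verify that $F_\rho$ preserves composition. Suppose $(s,e)\circ(t,f)$ exists in $\C(P)$. By definition of $\circ$, this means $e=D(t)$, and then $(s,e)\circ(t,f)=(s\cdot t, f)$ (with $s\cdot t$ existing). Applying $\rho$, we get $e\rho=D(t)\rho=D(t\rho)$, so $(s\rho,e\rho)\circ(t\rho,f\rho)$ exists in $\C(Q)$ and equals $(s\rho\cdot t\rho,f\rho)=((s\cdot t)\rho,f\rho)=((s\cdot t,f))F_\rho$, which is $((s,e)\circ(t,f))F_\rho$. For the $D$-condition, $D((s,e))F_\rho=(D(s),D(s))F_\rho=(D(s)\rho,D(s)\rho)=(D(s\rho),D(s\rho))=D((s\rho,e\rho))=D((s,e)F_\rho)$, again using that $\rho$ is a radiant.

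Thus $F_\rho$ is a radiant between the categories $\C(P)$ and $\C(Q)$, and by Proposition \ref{radfunct} it is a functor. There is really no obstacle here; each step is an immediate unfolding of definitions combined with the radiant properties of $\rho$. The only minor care needed is to confirm well-definedness of $F_\rho$ on $\C(P)$ before verifying the functorial equations.
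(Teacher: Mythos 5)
Your proof is correct and follows essentially the same route as the paper: verify that $F_\rho$ preserves the partial product and $D$, then invoke Proposition \ref{radfunct} to conclude it is a functor. The explicit well-definedness check (that $(s\rho,e\rho)\in\C(Q)$) is a small addition the paper leaves implicit, but otherwise the arguments coincide.
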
  

\begin{proof}  We abbreviate $F_{\rho}$ to $F$ in what follows.
Suppose $(s,e)\circ (t,f)$ exists in $\C(P)$.  Then $D(t)=e$ and $s\cdot t$ exists in $P$, and indeed $s\cdot e$ and $t\cdot f$ exist.  So $s\rho\cdot t\rho$ exists in $Q$, as do $s\rho\cdot e\rho$ and $t\rho\cdot f\rho$, and $D(t\rho)=D(t)\rho=e\rho$.  Hence 
\begin{align*}
((s,e)F)\circ ((t,f)F)&=(s\rho,e\rho)\circ(t\rho,f\rho)\\
&=((s\rho)\cdot (t\rho),f\rho)\\
&=((s\cdot t)\rho,f\rho)\\
&=(s\cdot t,f)F\\
&=((s,e)\circ (t,f))F
\end{align*}
exists.  Moreover, 
\begin{align*}
D((s,e)F)&=D((s\rho,e\rho))\\
&=(D(s\rho),D(s\rho))\\
&=(D(s)\rho,D(s)\rho)\\
&=(D(s),D(s))F\\
&=(D((s,e))F,
\end{align*}
so $F$ is a radiant and hence a functor by Proposition \ref{radfunct}.
\end{proof}

We remark that the canonical extension construction on composable constellations has most of the properties of being a closure operator on a quasiordered set (``up to isomorphism" at least).  Thus defining $Q\lhd P$ if there is a full surjective radiant from composable constellation $P$ to composable constellation $Q$ (that is, $Q$ is isomorphic to a constellation quotient of $P$), we see that $\lhd$ is a quasiorder on the class of composable constellations.  Moreover, from the result above, the following are satisfied:
\bi
\item $P\lhd \C(P)$
\item $P\lhd Q$ implies $\C(P)\lhd \C(Q)$
\item $\C(\C(P))\cong \C(P)$.
\ei
So if one works with isomorphism classes of composable constellations, one obtains a closure operator on the resulting poset, and the closed elements are the (isomorphism classes of) categories.  It follows that for composable $P$, $\C(P)$ is the ``smallest" category ``above" $P$: so if there is a radiant $K\rightarrow P$ for some category $K$, then there must be a functor $F:K\rightarrow \C(P)$.  Indeed, the canonical extension satisfies a universal property, even for general constellations.

\begin{pro}  \label{unifunc}
If $P$ is a constellation, and $K$ is a category for which $f:K\rightarrow P$ is a radiant, then there is a unique functor $F:K\rightarrow \C(P)$ for which $F\rho=f$, where $\rho$ is as in Proposition \ref{canonfunc}.  If $f$ separates projections, then $F$ is strong.
\end{pro}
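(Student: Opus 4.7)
The plan is to define $F$ using the range operation of $K$, which is available because $K$ is a category (and not merely a constellation). Specifically, I would set
$$kF=(kf,\, R(k)f)\quad\text{for all } k\in K,$$
where $R$ is the range operation on $K$. The first thing to check is that $(kf,R(k)f)\in \C(P)$: since $R(k)$ is an identity in $K$ we have $k\cdot R(k)=k$, and because $f$ is a radiant this gives $kf\cdot R(k)f=(k\cdot R(k))f=kf$, with $R(k)f\in D(P)$ because $R(k)\in D(K)$. That $F\rho=f$ is then immediate from the definition of $\rho$.

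Next I would verify that $F$ is a functor, using Proposition \ref{radfunct} to reduce this to checking that $F$ is a radiant. For composition, if $k\cdot l$ exists in $K$ then $R(k)=D(l)$, so $R(k)f=D(l)f=D(lf)$, which is exactly the condition for $(kf,R(k)f)\circ(lf,R(l)f)$ to be defined in $\C(P)$; the composite is then $((kl)f,R(l)f)=((kl)f,R(kl)f)=(kl)F$, using $R(k\cdot l)=R(l)$ in $K$. For the domain operation, $D(kF)=(D(kf),D(kf))=(D(k)f,D(k)f)$, while $D(k)F=(D(k)f,R(D(k))f)=(D(k)f,D(k)f)$ since $R(D(k))=D(k)$ in the category $K$; these agree.

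For uniqueness, suppose $F'\colon K\to \C(P)$ is any functor with $F'\rho=f$. Writing $kF'=(a,e)$, the condition $F'\rho=f$ forces $a=kf$. Since $F'$ is a functor, $R(kF')=R(k)F'$, and $R(k)F'$ must be an identity of $\C(P)$, so it has the form $(g,g)$ for some $g\in D(P)$ (by the characterization of identities established in the proof of Proposition \ref{CPran}); applying $\rho$ gives $g=R(k)f$. On the other hand $R(kF')=R((kf,e))=(e,e)$, so $e=R(k)f$, forcing $kF'=kF$.

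Finally, for the strength claim, suppose $f$ separates projections and $(kF)\circ(lF)$ exists in $\C(P)$. Then $R(k)f=D(l)f$ with both $R(k),D(l)\in D(K)$, so projection-separation gives $R(k)=D(l)$, hence $k\cdot l$ exists in $K$. None of the steps look problematic; if there is a mild obstacle it is being careful that the second coordinate $R(k)f$ really does end up in $D(P)$ and that the compatibility $R(k)f=D(lf)$ translates correctly across the radiant $f$, but both are immediate from $f$ being a radiant together with $D(K)=R(K)$ in the category $K$.
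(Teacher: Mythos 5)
Your proposal is correct and follows essentially the same route as the paper's proof: the same formula $kF=(kf,R(k)f)$, verification of radiance via Proposition \ref{radfunct}, uniqueness by computing $R(kF')$ in two ways, and strength from projection-separation applied to $R(k)f=D(l)f$. The only (welcome) difference is that you check well-definedness of $F$ up front via $k\cdot R(k)=k$, which the paper leaves slightly implicit.
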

\begin{proof}
Define $aF=(af,R(a)f)$ for all $a\in K$.  Now for all $a,b\in K$ for which $a\cdot b$ exists in $K$, we have $R(a)=D(b)$, so $R(a)f=D(b)f$, and so 
$af\cdot R(a)f=af\cdot D(b)f=(a\cdot D(b))f=af$.  Hence
\begin{align*}
(a\cdot b)F&=((a\cdot b)f,R(a\cdot b)f)\\
&=(af\cdot bf,R(b)f)\\
&=(af,R(a)f)\circ (bf,R(b)f))\\
&=aF\circ bF,\mbox{ and}
\end{align*} 
\begin{align*}
D(a)F&=(D(a)f,R(D(a))f)\\
&=(D(a)f,D(a)f)\\
&=(D(af),D(af))\\
&=D((af,R(a)f))\\
&=D(aF),
\end{align*}
so $F$ is a radiant and hence by Proposition \ref{radfunct}, it is a functor.

Now for all $a\in K$, $aF\rho=(af,R(a)f)\rho=af$, so $F\rho=f$.  Conversely, suppose $G$ is a functor $K\rightarrow \C(P)$ for which $G\rho=f$.  Then for $a\in K$, suppose $aG=(b,e)\in \C(P)$.  Then $af=(b,e)\rho=b$, and $R(a)G=R(aG)=R((b,e))=(e,e)$.  So $R(a)f=R(a)G\rho=(e,e)\rho=e$, and so 
$$aG=(b,e)=(af,R(a)f)=aF,$$
showing that $F$ is unique.

Now assume $f$ separates projections.  Suppose that $aF\circ bF$ exists.  Then $(af,R(a)f)\circ (bf,R(b)f)$ exists, and so $af\cdot bf$ exists, and $R(a)f=D(bf)=D(b)f$, so since $f$ separates projections, $R(a)=D(b)$ and so $a\cdot b$ exists.  Hence $F$ is strong.
\end{proof}

% *** Note that one could not just base the canonical extension construction on left restriction semis and hope all will work (even though one might be able to get an underlyng left restriction semi of group arrows for example), since $\rho$ above won't be full in that case: all products st exist but there are no e,f in general to ensure that (s,e)\circ (t,f) exists!  So we really do need a constellation to get a quotient of the category; a semigroup just has too many products defined.  Besides, there'd be no 1:1 correspondence without corestriction.

In many categories, the notion of substructure can be captured in terms of arrows (this is true in any category of algebras for example), but not in all.  (We here use the term ``substructure" rather than ``subobject", since the latter has an existing definition in category theory in terms of equivalence classes of arrows that often but not always captures the intended meaning.)

Recall the constellation $P=\{e,f\}$ arising from the partial order in which $e<f$, so that $D(P)=P$ and $e\cdot f$ exists but $f\cdot e$ does not, so that $K=\C(P)=\{E,s,F\}$ where $E=(e,e),s=(e,f), F=(f,f)$ and $D(K)=\{E,F\}$.  We can concretely realise $P$ in terms of partial mappings on the set $X=\{x,y\}$ by letting $e=\{(x,x)\}$ and $f=\{(x,x),(y,y)\}$ with constellation composition and domain defined as usual in $\C_X$.  It is then natural to realise $\C(P)$ as cod-functions on $X$ in which $E$ is represented as the identity map on $\{x\}$, $F$ as the identity map on $\{x,y\}$, and $s$ as the map $\psi:\{x\}\rightarrow \{x,y\}$ in which $x\psi=x$ (a subcategory of SET).  This is a category in which $\{x\}$ may naturally be viewed as a substructure of $\{x,y\}$.

However, we can also realise $\C(P)$ in a different way as a (full) subcategory of SET, as follows: $E$ is represented as the identity on $\{x\}$, $F$ as the identity on $\{y\}$, and $s$ as $\psi:\{x\}\rightarrow \{y\}$ given by $x\psi=y$.  Of course now, $\{x\}$ is not a substructure of $\{y\}$ in any natural sense.
This shows that the concept of substructure is not category-theoretic.  However, it is constellation-theoretic, being captured by the assertion ``$e\cdot f$ exists" as in all of our earlier concrete examples (a formulation also far simpler than the usual category-theoretic definition of subobject, which as just shown may not yield the ``right" concept anyway).  In particular, if we have a representation of a category as $\C(P)$ for some constellation $P$, then we may use the standard quasiorder on $D(P)$ to induce one on $D(\C(P))$, and this provides a notion of substructure on the objects of $\C(P)$.

\subsection{Canonical congruences}

It is possible to give an internal description of the congruences on a category $K$ that give constellation quotients $P$ for which $K\cong \C(P)$.
%analogous to the internal characterisation of when an algebra is a direct product of two algebras of the same signature.  
Let us say that a congruence $\delta$ on a constellation $P$ is {\em canonical} if 
\bi
\item $\delta$ separates projections;
\item if $(a,b)\in \delta$ and $a\cdot e$ and $b\cdot e$ both exist for some $e\in D(P)$, then $a=b$.
\ei

\begin{pro}  \label{canonsimp}
An equivalence relation $\delta$ on a constellation $P$ is a canonical congruence if and only if it satisfies the following:
\bi
\item if $(a,b)\in \delta$ then $D(a)=D(b)$;
\item if $(a,b)\in \delta$ and $a\cdot e$ and $b\cdot e$ exist for some $e\in D(P)$, then $a=b$.
\item if  $(b,c)\in \delta$ and $a\cdot b$ (and hence $a\cdot c$) exists, then $(a\cdot b,a\cdot c)\in\delta$. 
\ei
\end{pro}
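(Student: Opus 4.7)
The plan is to prove both directions, noting that the forward implication is straightforward while the reverse requires a small trick.

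For the forward direction, assume $\delta$ is a canonical congruence. For the first bullet, $(a,b)\in\delta$ forces $(D(a),D(b))\in\delta$ by the congruence property, and then projection-separation gives $D(a)=D(b)$. The second bullet is verbatim the defining clause of canonicality. For the third, with $(b,c)\in\delta$ and $a\cdot b$ existing, the first bullet (just shown) yields $D(b)=D(c)$, so $a\cdot c$ exists by Lemma \ref{2p3}; then $(a\cdot b,a\cdot c)\in\delta$ follows from the congruence property applied to $(a,a)$ and $(b,c)$.

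For the reverse direction, assume the three bulleted conditions. Projection-separation is immediate: if $(e,f)\in\delta$ with $e,f\in D(P)$, then $e=D(e)=D(f)=f$ by the first bullet. The second clause of canonicality is simply the second bullet. What remains is to verify that $\delta$ is a congruence, i.e.\ that $(s_1,s_2),(t_1,t_2)\in\delta$ together with the existence of both $s_1\cdot t_1$ and $s_2\cdot t_2$ imply $(D(s_1),D(s_2))\in\delta$ and $(s_1\cdot t_1,s_2\cdot t_2)\in\delta$. The first is immediate (in fact an equality) by bullet one.

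For the product clause, I would argue in two moves. First, bullet one gives $D(t_1)=D(t_2)$, whence Lemma \ref{2p3} tells us that $s_1\cdot t_2$ exists, and then bullet three (with $a=s_1$, $b=t_1$, $c=t_2$) yields $(s_1\cdot t_1,s_1\cdot t_2)\in\delta$. Second, since $s_1\cdot t_2$ and $s_2\cdot t_2$ both exist, Lemma \ref{2p3} shows that $s_1\cdot D(t_2)$ and $s_2\cdot D(t_2)$ both exist; applying bullet two to $(s_1,s_2)\in\delta$ with $e=D(t_2)$ forces $s_1=s_2$, so $s_1\cdot t_2=s_2\cdot t_2$. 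Chaining these gives $(s_1\cdot t_1,s_2\cdot t_2)\in\delta$.

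The only subtlety, and the step I expect to be the main obstacle for a reader, is recognising that bullet two (a purely ``rigidity'' condition) silently supplies a form of right-congruence: the naive worry is that bullet three only handles congruence on the left, but in the presence of bullet one and bullet two, the right-hand side collapses because any two $\delta$-related elements that can both be post-multiplied by a common projection must be equal. Once this observation is in hand, the rest of the argument is bookkeeping with Lemma \ref{2p3}.
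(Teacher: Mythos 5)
Your proof is correct and follows essentially the same route as the paper's: both directions hinge on using bullet one together with Lemma \ref{2p3} to reduce existence of $s_1\cdot t_1$ and $s_2\cdot t_2$ to existence of products with a common projection $D(t_1)=D(t_2)$, then invoking bullet two to collapse $s_1=s_2$ and bullet three to finish. The only difference is the order in which you apply bullets two and three in the converse, which is immaterial.
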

\begin{proof}
Obviously a canonical congruence satisfies all of the above.  Conversely, let $\delta$ be an equivalence relation on $P$ satisfying the above.  If $(e,f)\in\delta$ for some $e,f\in D(P)$, then $e=D(e)=D(f)=f$, so $\delta$ separates elements of $D(P)$.
Suppose $(a,b)\in\delta$ and $(c,d)\in \delta$.  Then $D(a)= D(b)$ and $D(c)=D(d)$.  Suppose $a\cdot c$ and $b\cdot d$ exist.  Then $a\cdot D(c)$ and $b\cdot D(d)=b\cdot D(c)$ exist, so $a=b$ by the second assumed condition.  So then we have $(a\cdot c,a\cdot d)\in\delta$ by the third.  Also, $D(a)=D(b)$, so $\delta$ trivially respects $D$.  Hence $\delta$ is a congruence, which is canonical by the second law above.
\end{proof}

We are most interested in canonical congruences on categories.

\begin{pro}  
If $K$ is a category, then the congruence $\delta$ on the constellation reduct of $K$ is canonical if and only if 
\bi
\item $\delta$ separates projections;
\item if $(a,b)\in \delta$ and $R(a)=R(b)$ then $a=b$.
\ei
\end{pro}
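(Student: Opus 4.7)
The plan is to observe that for the constellation reduct of a category $K$, the condition ``$a\cdot e$ and $b\cdot e$ both exist for some $e\in D(K)$'' simplifies drastically, because in a category the element of $D(K)$ (i.e.\ identity) with which $a$ can be post-composed is uniquely determined by $a$.

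More precisely, the first step is to recall from Proposition \ref{catunique} (applied to $K$ itself as a categorial constellation) that for each $a\in K$ there is a \emph{unique} $e\in D(K)$ such that $a\cdot e$ exists, and this $e$ is precisely $R(a)$. In other words, for $a\in K$ and $e\in D(K)$, the product $a\cdot e$ exists if and only if $e=R(a)$. The same holds with $b$ in place of $a$.

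The second step is to plug this equivalence into the definition of a canonical congruence. The projection-separating clause is identical in both characterisations, so only the second clauses need be compared. Suppose $\delta$ is canonical and $(a,b)\in\delta$ with $R(a)=R(b)$; setting $e:=R(a)=R(b)\in D(K)$, both $a\cdot e$ and $b\cdot e$ exist, so $a=b$ by canonicity. Conversely, suppose $\delta$ satisfies the proposition's two conditions, and assume $(a,b)\in\delta$ with $a\cdot e$ and $b\cdot e$ both existing for some $e\in D(K)$. By the uniqueness just recalled, $e=R(a)$ and $e=R(b)$, so $R(a)=R(b)$, hence $a=b$ by the second condition.

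There is no real obstacle here; the proof is essentially a one-line translation once Proposition \ref{catunique} is invoked. The only thing to double-check is that ``$\delta$ separates projections'' means the same in the constellation reduct of $K$ as it would in any other context (it does, since $D(K)$ is just the class of identities of $K$, which is preserved under taking the reduct).
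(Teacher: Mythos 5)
Your proof is correct and follows essentially the same route as the paper: both arguments reduce the clause ``$a\cdot e$ and $b\cdot e$ exist for some $e\in D(K)$'' to ``$R(a)=R(b)$'' using the fact that in a category $a\cdot e$ exists for $e\in D(K)$ precisely when $e=R(a)$. The paper states this more tersely (one direction plus ``the converse is immediate''), but the content is identical.
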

\begin{proof}
If the second condition above holds and $a\ \delta\ b$ with $a\cdot e$ and $b\cdot e$ existing, then $e=R(a)=R(b)$, so $a=b$.  The converse is immediate.
\end{proof}

By a similar argument, we have the following consequence of Proposition \ref{canonsimp}.
 
\begin{pro}  \label{canonsimpcat}
An equivalence relation $\delta$ on a category $K$ is a canonical congruence if and only if it satisfies the following:
\bi
\item if $(a,b)\in \delta$ then $D(a)=D(b)$;
\item if $(a,b)\in \delta$ and $R(a)=R(b)$, then $a=b$;
\item if $(b,c)\in \delta$ and $a\cdot b$ (and hence $a\cdot c$) exists, then $(a\cdot b,a\cdot c)\in\delta$. 
\ei
\end{pro}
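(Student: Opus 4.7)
The plan is to deduce this directly from Proposition \ref{canonsimp} applied to the constellation reduct of $K$. The first and third listed conditions are word-for-word those of Proposition \ref{canonsimp}, so the whole argument reduces to reconciling the middle condition in each formulation. The key observation I would use throughout is that in a category $K$, for any $e\in D(K)$ and any $a\in K$, the product $a\cdot e$ exists if and only if $e=R(a)$; this holds because $D(K)$ is precisely the class of identities of $K$, so composition $a\cdot e$ forces $R(a)=D(e)=e$.

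For the forward direction, I would assume $\delta$ is a canonical congruence on $K$. By Proposition \ref{canonsimp} the first and third conditions listed here already hold. For the second, suppose $(a,b)\in\delta$ with $R(a)=R(b)=e$; then $a\cdot e=a$ and $b\cdot e=b$ both exist with $e\in D(K)$, so the middle condition of \ref{canonsimp} yields $a=b$.

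Conversely, I would suppose an equivalence relation $\delta$ on $K$ satisfies the three listed conditions, and verify the three hypotheses of Proposition \ref{canonsimp}. The first and third transfer directly. For its middle condition, suppose $(a,b)\in\delta$ and $a\cdot e$, $b\cdot e$ both exist for some $e\in D(K)$. The key observation above forces $e=R(a)$ and $e=R(b)$, so in particular $R(a)=R(b)$, and the second hypothesis here then gives $a=b$. Proposition \ref{canonsimp} now delivers that $\delta$ is a canonical congruence on $K$.

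I do not anticipate any real obstacle; the content is essentially a translation between the two equivalent forms of the middle condition, enabled by the category-specific identification of those $e\in D(K)$ for which $a\cdot e$ exists with the unique projection $R(a)$.
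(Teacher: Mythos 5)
Your proof is correct and follows essentially the same route as the paper: the paper derives this proposition from Proposition \ref{canonsimp} via the preceding result, whose proof rests on exactly the observation you isolate, namely that in a category $a\cdot e$ exists for an identity $e$ precisely when $e=R(a)$. No gaps.
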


\begin{pro}  \label{CPsim}
If $P$ is a constellation, then the relation $\sim$ on $\C(P)$ given by $(a,e)\sim (b,f)$ if and only if $a=b$ is a canonical congruence, and if $P$ is composable, then $P\cong C(P)/{\sim}$.
\end{pro}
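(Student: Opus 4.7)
The plan is to verify that $\sim$ is a canonical congruence on the category $\C(P)$ by directly checking the three conditions in Proposition \ref{canonsimpcat}, and then to obtain the isomorphism $P \cong \C(P)/{\sim}$ by identifying $\sim$ with the kernel of the radiant $\rho:\C(P)\to P$ supplied by Proposition \ref{radim}.

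First, $\sim$ is clearly an equivalence relation, being defined via equality in the first coordinate. For the canonicity conditions, suppose $(a,e)\sim (b,f)$, so $a=b$. Then $D((a,e)) = (D(a),D(a)) = (D(b),D(b)) = D((b,f))$, giving the first condition. For the second condition, assume in addition $R((a,e)) = R((b,f))$; since $R((a,e)) = (e,e)$ and $R((b,f)) = (f,f)$, this forces $e=f$, hence $(a,e) = (b,f)$. For the third condition, suppose $(b,f)\sim (c,g)$ and $(a,e)\circ(b,f)$ exists in $\C(P)$. Since $b=c$ we have $D(b) = D(c)$, and $(a,e)\circ (b,f)$ existing requires $e = D(b) = D(c)$, so $(a,e)\circ (c,g) = (a\cdot c, g) = (a\cdot b, g)$ also exists; this is $\sim$-related to $(a,e)\circ (b,f) = (a\cdot b, f)$ since the first coordinates coincide. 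By Proposition \ref{canonsimpcat}, $\sim$ is a canonical congruence.

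For the second assertion, assume $P$ is composable and consider the map $\rho:\C(P)\to P$, $(s,e)\rho = s$, of Proposition \ref{radim}, which in this case is a full surjective radiant. Its kernel is precisely $\sim$, since $(a,e)\,\ker(\rho)\,(b,f)$ iff $a = (a,e)\rho = (b,f)\rho = b$ iff $(a,e)\sim(b,f)$. Proposition \ref{semiquot} then yields $\C(P)/{\sim} = \C(P)/\ker(\rho) \cong P$.

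There is no real obstacle here: both halves are essentially bookkeeping. The only point that requires a moment's thought is the third canonicity axiom, where one must notice that $(b,f)\sim(c,g)$ forces $D(b)=D(c)$ and hence that $(a,e)\circ (c,g)$ is automatically defined once $(a,e)\circ (b,f)$ is; the remainder reduces to applying the already-established universal-type behaviour of $\rho$.
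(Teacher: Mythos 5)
Your proof is correct and follows essentially the same route as the paper: both establish the isomorphism by identifying $\sim$ with $\ker(\rho)$ and invoking Propositions \ref{radim} and \ref{semiquot}. The only cosmetic difference is that you verify the compatibility-with-left-multiplication condition of Proposition \ref{canonsimpcat} by hand, whereas the paper obtains the congruence property for free from the observation that $\sim=\ker(\rho)$ and then checks only the two extra conditions making a congruence canonical.
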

\begin{proof}
Let $P$ be a constellation, and recall the radiant $\rho:\C(P)\rightarrow P$ defined by setting $(s,e)\rho=s$ for all $(s,e)\in \C(P)$.  Clearly $\ker(\rho)={\sim}$, so the latter is a congruence.  Suppose $(a,e)\sim(b,f)$.  Then of course $a=b$, so $D(a)=D(b)$ and so $D((a,e))=D((b,f))$.  If also $R((a,e))=R((b,f))$ then $(e,e)=(f,f)$, so $e=f$, and so since also $a=b$, we have $(a,e)=(b,f)$.  So $\sim$ is a canonical congruence on $C(P)$.

If $P$ is composable then $\rho$ is full and surjective by Proposition \ref{radim}, and $\C(P)/{\sim}=\C(P)/\ker(\rho)\cong P$ as constellations by Proposition \ref{semiquot}.  
\end{proof}

We have the following easy corollary of Corollary \ref{sepquot}.

\begin{pro}  \label{canonquot}
Let $P$ be a constellation, $\delta$ a canonical congruence on $P$.  Then $Q=P/\delta$ is a constellation for which $D(Q)=\{[e]\mid e\in D(P)\}$, which is composable if $P$ is.
\end{pro}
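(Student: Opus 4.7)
The plan is essentially to observe that this is immediate from the definition of canonical congruence together with Corollary \ref{sepquot}. First, I would recall that by the definition of canonical congruence (the first clause), every canonical congruence $\delta$ on $P$ separates projections. That is the only hypothesis of Corollary \ref{sepquot} that needs to be checked.

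Having verified this, I would simply apply Corollary \ref{sepquot} to $\delta$ to conclude that $Q = P/\delta$ is a constellation with $D(Q) = \{[e] \mid e \in D(P)\}$, and that $Q$ is composable whenever $P$ is. No further work is required, and there is no real obstacle: the proposition is a restatement of Corollary \ref{sepquot} in the special case where the projection-separating congruence is additionally canonical, and the extra canonicity condition (the second clause in the definition) is not used at all for this particular conclusion.
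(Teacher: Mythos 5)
Your proposal is correct and is exactly the paper's argument: the paper states this result as an immediate corollary of Corollary \ref{sepquot}, relying only on the fact that a canonical congruence separates projections by the first clause of its definition. Nothing further is needed, and you are right that the second clause of canonicity plays no role here.
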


We call the quotient $Q$ in the above proposition a {\em canonical quotient} of $P$.

As noted earlier, if $K$ is a category then $K\cong \C(K)\cong \C(K/\triangle)$, and the diagonal relation $\triangle$ is a canonical congruence on (the constellation reduct of) $K$.  More generally, we have the following.

\begin{thm}  \label{canoniso}
Let $K$ be a category, $\delta$ a canonical congruence on (the constellation reduct of) $K$.  Letting $P$ be the canonical quotient $K/\delta$ (a composable constellation by Proposition \ref{canonquot}), $K\cong \C(P)$ via the isomorphism given by $s\mapsto ([s],[R(s)])$ where $[x]$ is the $\delta$-class containing $x\in K$.
\end{thm}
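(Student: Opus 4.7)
The plan is to show that the map $\phi:K\rightarrow \C(P)$ defined by $\phi(s)=([s],[R(s)])$ is a bijective strong radiant between categories, hence a category isomorphism (invoking Proposition \ref{radfunct} to upgrade from radiant to functor). Note $P=K/\delta$ is a composable constellation by Proposition \ref{canonquot}, so $\C(P)$ is defined.  First I would verify $\phi$ is well-defined: $[R(s)]\in D(P)$ since $R(s)\in D(K)$, and $[s]\cdot [R(s)]=[s\cdot R(s)]=[s]$, so indeed $([s],[R(s)])\in \C(P)$.

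Next I would check that $\phi$ is a radiant.  If $s\cdot t$ exists in $K$, then $R(s)=D(t)$, so $[R(s)]=[D(t)]=D([t])$ in $P$, which is exactly the condition for $\phi(s)\circ \phi(t)$ to exist in $\C(P)$; the product then equals $([s\cdot t],[R(t)])=\phi(s\cdot t)$ since $R(s\cdot t)=R(t)$.  Compatibility with $D$ reduces to $D(\phi(s))=([D(s)],[D(s)])=\phi(D(s))$, using that $R(D(s))=D(s)$ in the category.  By Proposition \ref{radfunct}, $\phi$ is a functor.

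For injectivity, if $\phi(s)=\phi(t)$ then $s\ \delta\ t$ and $[R(s)]=[R(t)]$; since $\delta$ separates projections, $R(s)=R(t)$, and then the second clause in the definition of a canonical congruence (applied to $e=R(s)=R(t)$, since $s\cdot R(s)$ and $t\cdot R(t)$ both exist) forces $s=t$.  For strongness, if $\phi(s)\circ \phi(t)$ exists then $[R(s)]=[D(t)]$, whence $R(s)=D(t)$ by projection-separation, so $s\cdot t$ exists in $K$.

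The main obstacle is surjectivity, where the only ``outside'' information I have is a class $[a]$ in $P$ and a projection $[e]\in D(P)$ with $[a]\cdot [e]=[a]$; I must produce an actual arrow of $K$ mapping to this pair.  The key observation is that, since $\delta$ separates projections, it is right strong by Proposition \ref{seproj}.  Therefore, from the existence of $[a]\cdot [e]$ in $P$, there is some $a'\in [a]$ with $a'\cdot e$ existing in $K$, which in a category forces $R(a')=e$.  Then $\phi(a')=([a'],[R(a')])=([a],[e])$, proving surjectivity.  Combining these four items, $\phi$ is a bijective strong radiant between categories, hence a category isomorphism, as required.
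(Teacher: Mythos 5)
Your proof is correct and follows essentially the same route as the paper's: the same map, checked to be a well-defined, injective, surjective, strong functor, with injectivity from projection-separation plus the second canonical-congruence clause and surjectivity from right strongness (the paper argues the latter inline via $D(x)=e$ for $x\in[e]$ rather than citing Proposition \ref{seproj}). The only stylistic difference is that you verify functoriality and strongness directly, whereas the paper outsources them to Proposition \ref{unifunc} applied to the natural map $K\rightarrow K/\delta$.
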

\begin{proof}
We show that the mapping $\psi:K\rightarrow \C(P)$ given by $s\psi=([s],[R(s)])$ is an isomorphism of categories.  (Note that $[R(s)]\in D(P)$ for any $s\in K$, so $\psi$ is well-defined.)  Now if $f:K\rightarrow K/\delta$ is the natural map, then it is full and separates projections by Proposition \ref{sepcorresp}, and $s\psi=(sf,R(s)f)$ for all $s\in K$, so by Proposition \ref{unifunc} and its proof, $\psi$ is a strong functor.

If $a\psi=b\psi$ then $([a],[R(a)])=([b],[R(b)])$, so $[a]=[b]$ and $R(a)=R(b)$, giving $a=b$ by one of the properties of canonical congruences.  Hence $\psi$ is one-to-one.  For $([a],[e])\in \C(P)$, we have that $[a]\circ [e]$ exists, so $a_1\cdot e$ exists for some $a_1\in [a]$ since every $x\in[e]$ has $D(x)=D(e)=e$.  So $R(a_1)=e$ and so $a_1\psi=([a_1],[R(a_1)])=([a],[e])$.  Hence $\psi$ is a bijection, and so is an isomorphism.
\end{proof}

%***NEW/REORGANISED (14/7/15)
Since any congruence on the constellation $P$ contained in a canonical one is itself canonical, it follows that the canonical congruences on $P$ are closed under arbitrary intersection and so form a complete semilattice, though not necessarily a lattice (as we show below).  

The full relation is a congruence on any constellation.  However, it is only rarely canonical.

\begin{pro}
The full relation $\triangledown$ on a constellation $P$ is a canonical congruence if and only if $|D(P)|=1$, and for all $s\in P$, the only composable element of $P$ is the unique element of $D(P)$.
\end{pro}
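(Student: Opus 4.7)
The plan is to unpack what it means for the full relation $\triangledown$ to be a canonical congruence, and observe that most of the work is already done: since $\triangledown$ relates every pair, it is automatically an equivalence relation, and the clauses defining a congruence (respecting the partial product and $D$) hold trivially. Hence the only nontrivial content lies in the two extra canonicity conditions, namely
\bi
\item[(i)] projection separation: for all $e,f\in D(P)$, $e=f$ (since we always have $e\,\triangledown\, f$);
\item[(ii)] for all $a,b\in P$ and $g\in D(P)$, if $a\cdot g$ and $b\cdot g$ both exist then $a=b$.
\ei

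For the forward direction I would assume $\triangledown$ is canonical. Condition (i) immediately gives $|D(P)|=1$; write $D(P)=\{e_0\}$. To see that $e_0$ is the unique composable element, let $s\in P$ be composable, say $s\cdot t$ exists. By Lemma \ref{2p3}, $s\cdot D(t)=s\cdot e_0$ exists; also $e_0\cdot e_0=e_0$ exists. Applying (ii) with $a=s$, $b=e_0$, $g=e_0$ yields $s=e_0$.

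For the converse, I would assume $|D(P)|=1$ with unique projection $e_0$, and that $e_0$ is the only composable element. Condition (i) holds trivially. For (ii), suppose $a\cdot g$ and $b\cdot g$ both exist with $g\in D(P)$; then $g=e_0$, and both $a$ and $b$ are composable, forcing $a=e_0=b$. Thus $\triangledown$ satisfies both canonicity conditions and so is a canonical congruence.

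There is no real obstacle in this argument: once one notes that the congruence clauses are automatic for $\triangledown$ and invokes Lemma \ref{2p3} to reduce composability of $s$ to the existence of $s\cdot e_0$, the equivalence is an essentially mechanical translation of the canonicity axioms.
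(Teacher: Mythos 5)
Your proof is correct and follows essentially the same route as the paper's: projection separation forces $|D(P)|=1$, and the second canonicity condition applied to a composable $s$ (using Lemma \ref{2p3} to get $s\cdot e_0$ from $s\cdot t$, then comparing with $e_0\cdot e_0$) forces $s=e_0$, with the converse being the same mechanical check. No issues.
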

\begin{proof}
If $\triangledown$ is a canonical congruence on the constellation $P$, then for all $s,t\in P$, $D(s)=D(t)$, so $D(P)$ has only one element, $e$ say.  But also, for all $s,t\in P$, if $s\neq t$ then there is no $e\in D(P)$ for which $s\cdot e$ and $t\cdot e$ exists.  So in particular, if $s\cdot t$ exists, then $s\cdot D(t)=s\cdot e$ exists, so because $e\cdot e$ exists, it must be that $s=e$.  

Conversely, suppose $|D(P)|=1$ and $s\cdot t$ existing implies $s=D(t)$.  Then $D(s)=D(t)$ for all $s,t\in P$.  Also, suppose $s\cdot e$ and $t\cdot e$ exist.  Then $s=t=D(e)=e$.  So $\triangledown$, being a congruence, is a canonical congruence.
\end{proof}

\begin{cor} If $K$ is a category, the congruence $\triangledown$ is canonical if and only if $K$ has a single element.
\end{cor}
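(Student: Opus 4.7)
The plan is to deduce this directly from the preceding proposition characterising when $\triangledown$ is canonical on a general constellation $P$, applied to the constellation reduct of $K$. The key observation is that in a category every element is composable: for any $s \in K$, the product $s \cdot R(s)$ exists and equals $s$, so $s$ is composable by definition. This is the one category-specific fact I need.

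First I would handle the forward direction. Assume $\triangledown$ is a canonical congruence on $K$. By the preceding proposition, $|D(K)|=1$, say $D(K)=\{e\}$, and the only composable element of $K$ is $e$. But as noted above, every $s \in K$ is composable (via $s \cdot R(s)=s$). Hence every $s \in K$ equals $e$, so $K=\{e\}$ has a single element.

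For the converse, if $K=\{e\}$ consists of a single element, then $e$ must be an identity (it is both $D(e)$ and $R(e)$), and $\triangledown = \{(e,e)\} = \triangle$ is trivially a canonical congruence (the two defining conditions are vacuous or immediate).

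There is no real obstacle here; the only subtlety worth noting is that the requirement from the previous proposition, that the only composable element of $P$ be the unique projection, becomes extremely restrictive for categories precisely because composability is automatic in the categorial setting, forcing $K$ itself to collapse to $D(K)$, and then to a single identity.
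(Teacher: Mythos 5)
Your proof is correct. It takes a slightly different route from the paper's: you deduce the result from the preceding proposition (the characterisation of when $\triangledown$ is canonical on a general constellation, namely $|D(P)|=1$ together with the unique projection being the only composable element), combined with the category-specific observation that every element $s$ of a category is composable because $s\cdot R(s)=s$ exists. The paper instead argues directly from the category form of the canonical-congruence conditions: since $s\ \triangledown\ R(s)$ and $R(s)=R(R(s))$, the condition ``related elements with equal range coincide'' forces $s=R(s)\in D(K)$, and projection-separation then collapses $K$ to a point. The two arguments hinge on the same underlying mechanism (applying the second canonical condition with $e=R(s)$), but yours has the merit of actually justifying the ``Corollary'' label by routing through the stated proposition, while the paper's is self-contained and marginally shorter. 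Both converses are the same trivial observation. No gaps.
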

\begin{proof} If $K$ is a category on which $\triangledown$ is canonical, then for all $s\in K$, $s\ \triangledown\ R(s)$ and $R(s)=R(R(s))$, and so $s=R(s)\in D(K)$.  The converse is immediate.
\end{proof}

It is possible to describe when two elements of a category can be related by a canonical congruence.  

Let $K$ be a category.  For $a\in K$, define $\ker(a)=\{(x,y)\in K\times K\mid \mbox{$x\cdot a$ and $y\cdot a$ exist and are equal}\}$. For unequal $a,b\in K$, define the relation $\delta_{a,b}$ on $K$ by setting $(s,t)\in\delta_{a,b}$ if and only if $s=t$ or $\{s,t\}=\{x\cdot a,x\cdot b\}$ for some $x\in K$.

\begin{pro}  \label{pcc}
Let $K$ be a category, with $a,b\in K$, $a\neq b$.  Then there is a canonical congruence $\delta$ on $K$ for which $(a,b)\in \delta$ if and only if
\bi
\item $D(a)=D(b)$;
\item $R(a)\neq R(b)$; and
\item $\ker(a)=\ker(b)$.
\ei
In this case, $\delta_{a,b}$ is the smallest canonical congruence relating $a,b$.
\end{pro}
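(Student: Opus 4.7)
The plan is to use Proposition \ref{canonsimpcat} as the working characterisation of canonical congruences on a category, and handle the two directions of the biconditional separately.

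For necessity, suppose $\delta$ is a canonical congruence with $(a,b)\in\delta$. The first bullet follows directly from condition (a) of Proposition \ref{canonsimpcat}, and the second is forced: if $R(a)=R(b)$ then condition (b) would give $a=b$. For the third bullet, take $(x,y)\in\ker(a)$, so $x\cdot a=y\cdot a$. Since $D(a)=D(b)$, $x\cdot b$ and $y\cdot b$ both exist. Applying condition (c) to $(a,b)\in\delta$ yields $(x\cdot a,x\cdot b)\in\delta$ and $(y\cdot a,y\cdot b)\in\delta$, so $x\cdot b\ \delta\ x\cdot a=y\cdot a\ \delta\ y\cdot b$. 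Since $R(x\cdot b)=R(b)=R(y\cdot b)$, condition (b) forces $x\cdot b=y\cdot b$, i.e.\ $(x,y)\in\ker(b)$. The reverse inclusion is symmetric.

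For sufficiency, I would show that $\delta_{a,b}$ itself is a canonical congruence containing $(a,b)$. Taking $x=D(a)=D(b)$ gives $\{a,b\}=\{x\cdot a,x\cdot b\}$, so $(a,b)\in\delta_{a,b}$. Reflexivity and symmetry are built in. The main work is transitivity: given $(s,t),(t,u)\in\delta_{a,b}$ with $s\neq t$ and $t\neq u$, write $\{s,t\}=\{x\cdot a,x\cdot b\}$ and $\{t,u\}=\{y\cdot a,y\cdot b\}$. The hypothesis $R(a)\neq R(b)$ is crucial here: it rules out $t=x\cdot a=y\cdot b$ (and its mirror), since applying $R$ to such an equation would force $R(a)=R(b)$. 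So either $t=x\cdot a=y\cdot a$, in which case $(x,y)\in\ker(a)=\ker(b)$ gives $x\cdot b=y\cdot b$, whence $s=x\cdot b=y\cdot b=u$; or $t=x\cdot b=y\cdot b$, and $\ker(b)=\ker(a)$ forces $s=x\cdot a=y\cdot a=u$ similarly. Either way $s=u$, so transitivity holds.

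Verifying the three conditions of Proposition \ref{canonsimpcat} for $\delta_{a,b}$ is then routine: condition (a) holds since $D(x\cdot a)=D(x)=D(x\cdot b)$; condition (b) again uses $R(a)\neq R(b)$ to exclude the non-trivial case; and for condition (c), if $u\cdot(x\cdot a)$ exists then by associativity $(u\cdot x)\cdot a$ and $(u\cdot x)\cdot b$ exist and $\{u\cdot(x\cdot a),u\cdot(x\cdot b)\}=\{(u\cdot x)\cdot a,(u\cdot x)\cdot b\}\in\delta_{a,b}$. Minimality is immediate: any canonical congruence $\delta$ containing $(a,b)$ must, by condition (c), contain every pair $(x\cdot a,x\cdot b)$, hence contains all generators of $\delta_{a,b}$.

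The main obstacle is transitivity, where all three hypotheses interact: the ``mixed'' cases are excluded by $R(a)\neq R(b)$, while the ``pure'' cases collapse to equality via $\ker(a)=\ker(b)$. Once transitivity is established, everything else reduces to bookkeeping with the axioms of Proposition \ref{canonsimpcat}.
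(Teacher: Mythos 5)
Your proposal is correct and follows essentially the same route as the paper: both directions are run through Proposition \ref{canonsimpcat}, the kernel equality is extracted in the necessity direction exactly as in the paper's argument (left-multiplying the pair $(a,b)$ and invoking canonicity via equal ranges), and sufficiency is established by verifying that $\delta_{a,b}$ is itself a canonical congruence, with the same transitivity case analysis in which $R(a)\neq R(b)$ kills the mixed cases and $\ker(a)=\ker(b)$ collapses the pure ones. The minimality argument is also identical, so there is nothing substantive to add.
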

\begin{proof}
Suppose first that $\delta$ is a canonical congruence on $K$, with $(a,b)\in\delta$.  Then of course $D(a)=D(b)$ and $R(a)\neq R(b)$.  Suppose $x\cdot a$ and $y\cdot a$ exist and are equal.  Then $x\cdot b$ and $y\cdot b$ exist since $D(a)=D(b)$, and so 
$x\cdot b \ \delta\ x\cdot a=y\cdot a\ \delta\ y\cdot b$ since $\delta$ is a congruence.  Each has range $R(b)$ and so $x\cdot b=y\cdot b$ since $\delta$ is canonical.  So $\ker(a)\subseteq \ker(b)$. Symmetry now implies the opposite inclusion.

Conversely, suppose $a,b$ satisfy the three conditions.  Then the relation $\delta=\delta_{a,b}$ is obviously reflexive and symmetric.  Suppose $(s,t)\in \delta$ and $(t,u)\in \delta$.  If $s=t$ or $t=u$, then obviously $(s,u)\in \delta$, so suppose neither equality holds.  Then there are $x,y\in K$ for which (without loss of generality) $s=x\cdot a$, $t=x\cdot b$ and either (i) $t=y\cdot a$ and $u=y\cdot b$ or (ii) $t=y\cdot b$ and $u=y\cdot a$.  If (i), then  $t=x\cdot b=y\cdot a$, so $R(t)=R(b)=R(a)$, a contradiction.  So (ii) holds and $t=x\cdot b=y\cdot b$, giving $s=x\cdot a=y\cdot a=u$ and so trivially $(s,u)\in\delta$.  So $\delta$ is transitive and hence is an equivalence relation.  

Clearly, $(x,y)\in \delta$ implies $D(x)=D(y)$, and clearly $R(x)=R(y)$ implies $x=y$.  Suppose $s\in K$ and $(u,v)\in\delta$, with $s\cdot u$ and hence $s\cdot v$ existing.  If $u=v$ then trivially $s\cdot u\ \delta\ s\cdot v$.  If not, then without loss of generality, $u=y\cdot a$ and $v=y\cdot b$ for some $y\in K$, and then
$s\cdot (y\cdot a)$ exists and equals $(s\cdot y)\cdot a$, while $s\cdot (y\cdot b)$ exists and equals $(s\cdot y)\cdot b$, which are related by $\delta$ by definition.  So by Proposition \ref{canonsimpcat}, $\delta$ is a canonical congruence, and $(a,b)\in \delta$ since $a=D(a)\cdot a$ and $b=D(a)\cdot b$.

If $\tau$ is another canonical congruence relating $a,b$ and $(s,t)\in \delta$, then either $s=t$ or $s=x\cdot a$ and $t=x\cdot b$ (or the other way around), and so $s\ \tau\ t$ also.  So $\delta\subseteq \tau$.
\end{proof}

Let $K$ be a category.  For unequal $a,b\in K$ satisfying the conditions of Proposition \ref{pcc}, it is appropriate to describe $\delta_{a,b}$ as the {\em principal canonical congruence} on $K$ generated by $a,b$.

%Then of course $D(a)=D(b)$.  Suppose $(x\cdot a)\cdot e$ exists for some $x\in P$ and $e\in D(P)$.  So $x\cdot a$ exists and so $x\cdot b$ exists (since $D(a)=D(b)$) and indeed $x\cdot a\ \delta\ x\cdot b$ since $\delta$ is a congruence.  So $(x\cdot a)\cdot e$ and $(x\cdot b)\cdot e$ cannot both exist since $\delta$ is canonical, so $(x\cdot b)\cdot e$ does not exist.  

%Finally, suppose $x\cdot a$ and $y\cdot a$ exist and are equal.  Then $x\cdot b$ and $y\cdot b$ exist since $D(a)=D(b)$, and so 
%$$x\cdot b \ \delta\ x\cdot a=y\cdot a\ \delta y\cdot b,$$
%since $\delta$ is a congruence, so $x\cdot b\ \delta\ y\cdot b$.  Since $P$ is composable, there exists $e\in D(P)$ for which $b\cdot e$ exists.  So $x\cdot (b\cdot e)=x\cdot b$ and $y\cdot(b\cdot e)$ exist, and equal $(x\cdot b)\cdot e,(y\cdot b)\cdot e$, and so $x\cdot b=y\cdot b$ since $\delta$ is canonical.  Symmetry now implies the required condition.
%\end{proof}

Let us call a constellation with no canonical congruences except the diagonal relation {\em canonically simple}.  Every quasiordered set viewed as a constellation is canonically simple since the only projection-separating congruence on it is the diagonal relation. 

\begin{pro}  \label{ranub}
If the constellation $P$ is such that for every $s,t\in P$ for which $D(s)=D(t)$, there exists $e\in D(P)$ for which $s\cdot e$ and $t\cdot e$ exist, then $P$ is canonically simple.
\end{pro}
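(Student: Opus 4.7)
The plan is essentially immediate from the two defining properties of a canonical congruence together with the observation (from Proposition \ref{canonsimp}) that a canonical congruence is $D$-preserving. I would proceed as follows.

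Let $\delta$ be any canonical congruence on $P$, and suppose $(a,b)\in\delta$. The first step is to note that, by Proposition \ref{canonsimp}, membership in a canonical congruence forces $D(a)=D(b)$. The second step is to invoke the hypothesis on $P$: since $D(a)=D(b)$, there exists $e\in D(P)$ such that both $a\cdot e$ and $b\cdot e$ exist. The third step is to apply the second clause in the definition of a canonical congruence, which asserts precisely that if $(a,b)\in\delta$ and there is some $e\in D(P)$ with both $a\cdot e$ and $b\cdot e$ existing, then $a=b$. Putting these three steps together, $\delta$ must equal the diagonal relation $\triangle$, so $P$ is canonically simple.

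There is no real obstacle here: the statement is essentially a direct matching of the hypothesis on $P$ against the definition of canonical congruence, with only Proposition \ref{canonsimp} needed to bridge the two. The only care required is to make sure to extract $D(a)=D(b)$ before invoking the hypothesis, since without that step one cannot even begin to apply the existence-of-$e$ assumption.
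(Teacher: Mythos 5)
Your proposal is correct and follows essentially the same route as the paper's own proof: extract $D(a)=D(b)$ via Proposition \ref{canonsimp}, apply the hypothesis to obtain a common $e\in D(P)$, and conclude $a=b$ from the second defining property of a canonical congruence. No gaps.
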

\begin{proof}
Let $P$ be a constellation satisfying the above conditions, with $\delta$ a canonical congruence on $P$.  Suppose $s\ \delta\ t$.  Then $D(s)=D(t)$ by Proposition \ref{canonsimp}, so by assumption, there exists $e\in D(P)$ for which $s\cdot e$ and $t\cdot e$ exist.  So again by Proposition \ref{canonsimp}, $s=t$.  So $\delta=\triangle$ and so $P$ is canonically simple.
\end{proof}

In particular, if a constellation has a global right identity element then it is canonically simple.  So $\C_X$ is canonically simple for any non-empty set $X$. 

It follows from Proposition \ref{ranub} that if the category $K$ is such that for every $a,b\in K$, $D(a)=D(b)$ implies that $R(a)=R(b)$, then $K$ is canonically simple; for example, every monoid is canonically simple.  More generally, we have the following consequence of Proposition \ref{pcc}.

\begin{cor}
The category $K$ is canonically simple if and only if for all $a,b\in K$, if $D(a)=D(b)$ and $\ker(a)=\ker(b)$ then $R(a)=R(b)$.
\end{cor}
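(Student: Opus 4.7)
The plan is to derive this corollary as a direct consequence of Proposition \ref{pcc}, by reformulating canonical simplicity in terms of the existence of related pairs.

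First I would unpack the definitions. By definition, $K$ is canonically simple if and only if the only canonical congruence on $K$ is $\triangle$, equivalently, if and only if no canonical congruence $\delta$ on $K$ relates any pair of distinct elements $a,b\in K$. So I would aim to show that the statement ``there exist distinct $a,b\in K$ and a canonical congruence $\delta$ with $(a,b)\in\delta$'' is equivalent to the statement ``there exist $a,b\in K$ with $D(a)=D(b)$ and $\ker(a)=\ker(b)$ but $R(a)\neq R(b)$.''

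For one direction, suppose $K$ fails the stated condition: there exist $a,b\in K$ with $D(a)=D(b)$ and $\ker(a)=\ker(b)$ but $R(a)\neq R(b)$. Since $R(a)\neq R(b)$ forces $a\neq b$, Proposition \ref{pcc} supplies a canonical congruence (explicitly $\delta_{a,b}$) relating the distinct elements $a$ and $b$, so $K$ is not canonically simple. Conversely, if $K$ is not canonically simple, then some canonical congruence $\delta\neq\triangle$ exists, so some distinct $a,b\in K$ satisfy $(a,b)\in\delta$; applying Proposition \ref{pcc} in the forward direction yields $D(a)=D(b)$, $\ker(a)=\ker(b)$, and $R(a)\neq R(b)$, so the stated condition fails.

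Putting these equivalences together, $K$ is canonically simple if and only if for all $a,b\in K$, $D(a)=D(b)$ together with $\ker(a)=\ker(b)$ implies $R(a)=R(b)$ (noting that when $a=b$ the implication is trivial, so the universal quantifier ranging over all $a,b$ rather than only distinct pairs causes no issue). There is no real obstacle here; the work has already been done in Proposition \ref{pcc}, and the corollary is essentially a contrapositive repackaging of its characterisation of principal canonical congruences.
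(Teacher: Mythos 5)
Your argument is correct and is exactly the intended route: the paper states this corollary without proof as an immediate consequence of Proposition \ref{pcc}, and your contrapositive unpacking (a failure of the stated condition is precisely a distinct pair $a,b$ satisfying the three conditions of that proposition, hence precisely the existence of a non-trivial canonical congruence) is the argument being left to the reader. No gaps.
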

  
For two surjective maps $f:A\rightarrow B$ and $g:A\rightarrow C$, there is an identity mapping $1_D:D\rightarrow D$ for which $f\cdot 1_D$ and $g\cdot 1_D$ exist in $CSET$ (for example, let $D=B\cup C$), so by Proposition \ref{ranub}, $CSET$ is canonically simple.  By contrast, we have the following.

\begin{pro} 
$CGRP$ is not canonically simple.  
\end{pro}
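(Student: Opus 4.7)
The plan is to exhibit an explicit non-trivial canonical congruence on $CGRP$, which by the definition of canonical simplicity forces the conclusion. The construction exploits the fact that $CGRP$ distinguishes between different set-level realizations of isomorphic groups.

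First I would fix any non-trivial group $G$ and an isomorphic ``tagged'' copy $G'$ whose underlying set is disjoint from $G$, together with the isomorphism $\phi\colon G\to G'$. Let $a=1_G$ and $b=\phi$, regarded as elements of $CGRP$. Then $D(a)=D(b)=1_G$ but $a\neq b$, since they have distinct codomains. The key set-theoretic observation is that no group $L$ can contain both $G$ and $G'$ as subgroups: the unique identity element of $L$ would have to equal both $e_G$ and $e_{G'}=\phi(e_G)$, which is impossible since these are distinct elements of the universe. The same argument shows that for any subgroup $H\leq G$, no group contains both $H$ and $\phi(H)\leq G'$ as subgroups.

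Next I would define $\delta$ on $CGRP$ to be the reflexive--symmetric--transitive closure of the collection $\{(s,\,s\cdot\phi):s\in CGRP,\ \mbox{Im}(s)\leq G\}$, and verify via Proposition \ref{canonsimp} that $\delta$ is a canonical congruence. The $D$-compatibility condition is immediate from Lemma \ref{2p3}. The ``no common right projection'' condition holds vacuously on non-diagonal pairs: if $s\cdot e$ and $(s\cdot\phi)\cdot e$ both existed for $e=1_L$, then $\mbox{Im}(s)\leq L$ and $\phi(\mbox{Im}(s))\leq L$, contradicting the key observation. Left-substitutivity follows from associativity (Const1), since $r\cdot(s\cdot\phi)=(r\cdot s)\cdot\phi$ whenever $r\cdot s$ exists, and $(r\cdot s,\, (r\cdot s)\cdot\phi)$ is itself a generator of $\delta$ because $\mbox{Im}(r\cdot s)\leq\mbox{Im}(s)\leq G$.

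The main obstacle is showing that the transitive closure adds no new pairs beyond the reflexive--symmetric closure. A putative non-trivial chain $(s,s\cdot\phi)\sim(s\cdot\phi,t)$ would require either $s\cdot\phi=r$ for some $r$ with $\mbox{Im}(r)\leq G$, which is impossible since $\phi(\mbox{Im}(s))$ contains $e_{G'}\notin G$, or $s\cdot\phi=r\cdot\phi$, which by the injectivity of $\phi$ forces $s=r$ and so collapses the chain. Hence $\delta$ is a genuine equivalence relation containing the non-diagonal pair $(a,b)$, yielding the required non-trivial canonical congruence.
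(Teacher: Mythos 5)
Your proof is correct and follows essentially the same strategy as the paper's: both exhibit an explicit non-trivial canonical congruence by pairing each surjective homomorphism onto (a subgroup of) one group with the corresponding surjection onto a disjoint isomorphic copy, and both verify the ``no common right projection'' condition of Proposition \ref{canonsimp} via the fact that a single group cannot contain two disjoint isomorphic copies because it has a unique identity element. The paper just takes the minimal instance of this idea --- two distinct one-element groups $G$ and $H$, with $s\ \delta\ t$ whenever $D(s)=D(t)$ and $s,t$ map onto $G,H$ respectively --- which makes the transitivity and left-substitutivity checks marginally shorter than your version with a general tagged copy and explicit isomorphism $\phi$.
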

\begin{proof}
Pick two projections $e,f\in D(CGRP)$ which are the identity map on two distinct trivial groups $G,H$, and define $\delta$ on $CGRP$ as follows: $s\ \delta\ t$ if and only if $s=t$ or $D(s)=D(t)$ and one of $s,t$ maps onto $G$ and the other onto $H$.  An easy case analysis shows $\delta$ is an equivalence relation.  (For example, if $(s,t)\in\delta$ and $(t,u)\in\delta$ with $s\neq t$ and $t\neq u$, then $D(s)=D(t)=D(u)$.  If without loss of generality $s$ maps onto $H$, then so must $u$ and so $s=u$.)  Clearly $(s,t)\in\delta$ implies $D(s)=D(t)$.  If $s\ \delta\ t$ and $s\neq t$ then there is no $g\in D(CSET)$ for which $s\cdot g$ and $t\cdot g$ exist since then the group on which $g$ is the identity map would have two distinct idempotent elements.  Finally, if $(s,t)\in\delta$ and $u\in CSET$ is such that $u\cdot s$ and $u\cdot t$ exist, then obviously $(u\cdot s,u\cdot t)\in \delta$.  So by Proposition \ref{canonsimp}, $\delta$ is a canonical congruence.
\end{proof}

Similar arguments apply to other concrete constellations arising in algebra.

A {\em maximal} canonical congruence on a constellation is a canonical congruence that does not lie inside another canonical congruence.  

\begin{pro}  \label{maxcanon}
Every constellation has a maximal canonical congruence. 
\end{pro}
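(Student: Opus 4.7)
The plan is to apply Zorn's lemma to the collection of canonical congruences on $P$, ordered by inclusion. This collection is non-empty since the diagonal relation $\triangle$ is trivially a canonical congruence (it separates projections vacuously and the second clause in the definition holds trivially). So it suffices to show that every chain has an upper bound that is itself canonical.

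Given a chain $\mathcal{C}$ of canonical congruences on $P$, the natural candidate for an upper bound is the union $\gamma = \bigcup_{\delta \in \mathcal{C}} \delta$. I would verify that $\gamma$ is an equivalence relation: reflexivity and symmetry are immediate from the corresponding properties of each $\delta \in \mathcal{C}$, and transitivity uses the chain condition: if $(a,b) \in \delta_1$ and $(b,c) \in \delta_2$ with $\delta_1, \delta_2 \in \mathcal{C}$, then (without loss of generality) $\delta_1 \subseteq \delta_2$, so both pairs lie in $\delta_2$ and hence $(a,c) \in \delta_2 \subseteq \gamma$.

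Next, I would verify that $\gamma$ is a canonical congruence by checking the three conditions of Proposition \ref{canonsimp}. Each condition has the form ``if $(x,y) \in \gamma$ then \ldots'', and since $(x,y) \in \gamma$ means $(x,y) \in \delta$ for some canonical $\delta \in \mathcal{C}$, the conclusion follows from $\delta$ being canonical. Explicitly: if $(a,b) \in \gamma$ then $(a,b) \in \delta$ for some canonical $\delta$, so $D(a) = D(b)$; if additionally $a \cdot e$ and $b \cdot e$ exist for some $e \in D(P)$, then $a = b$ by canonicity of $\delta$; and if $(b,c) \in \gamma$ with $a \cdot b$ existing, then $(b,c) \in \delta$ for some canonical $\delta$, so $(a \cdot b, a \cdot c) \in \delta \subseteq \gamma$. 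Thus $\gamma$ is a canonical congruence containing every member of $\mathcal{C}$, and Zorn's lemma yields a maximal element.

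There is no real obstacle here; the main point is simply that the three defining properties of a canonical congruence are all ``local'' in the sense that each refers only to a single related pair and its consequences, so they are preserved under directed unions. The only mild set-theoretic subtlety is if $P$ is a proper class, in which case one invokes the class-theoretic form of Zorn's lemma (equivalently, global choice), but this is standard in the setting of class-sized partial algebras adopted throughout the paper.
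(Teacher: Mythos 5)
Your proof is correct and follows essentially the same route as the paper: both arguments note that the diagonal relation is canonical and that the union of a chain of canonical congruences is again canonical, then invoke the class form of Zorn's Lemma. You have simply filled in the ``easily seen'' verification via the characterisation in Proposition \ref{canonsimp}, which is exactly the intended check.
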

\begin{proof}
The diagonal relation on the constellation $P$ is a canonical congruence.  The union of any chain of canonical congruences is easily seen to itself be a canonical congruence, so the result follows by Zorn's Lemma (for partially ordered classes rather than sets, a strictly stronger axiom of mathematics). 
\end{proof}

Proposition \ref{trans} specialises to canonical congruences.  Again, for $\delta$ an equivalence relation on the constellation $P$, denote by $[s]_{\delta}$ the $\delta$-class containing $s\in P$.

\begin{pro}  \label{transcanon}
Let $P$ be a constellation with $\delta$ a canonical congruence on it, and let $Q=P/\delta$.  Then for any canonical congruence $\epsilon$ on $Q$, the congruence $\gamma$ on $P$ that contains $\delta$ and such that $P/\gamma\cong Q/\epsilon$ as in Proposition \ref{trans} is canonical.  Conversely, if $\gamma$ is a canonical congruence on $P$ containing $\delta$, then the congruence $\epsilon$ on $Q$ for which $P/\gamma\cong Q/\epsilon$ as in Proposition \ref{trans} is canonical.  
\end{pro}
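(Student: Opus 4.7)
The plan is to observe that by Proposition \ref{trans} the congruences $\gamma$ and $\epsilon$ in question are already projection-separating congruences yielding the right quotient, so in both directions it only remains to verify the second defining clause of canonicity: if two related elements both compose with a common projection, they must coincide.

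For the forward direction, assume $\epsilon$ is canonical on $Q$. Take $(a,b)\in\gamma$ together with $e\in D(P)$ such that $a\cdot e$ and $b\cdot e$ both exist in $P$. By construction of $\gamma$ we have $[a]_{\delta}\ \epsilon\ [b]_{\delta}$, and the existence of the two products in $P$ yields $[a]_{\delta}\cdot[e]_{\delta}$ and $[b]_{\delta}\cdot[e]_{\delta}$ in $Q$, where $[e]_{\delta}\in D(Q)$ by Proposition \ref{rsquot}. Canonicity of $\epsilon$ then forces $[a]_{\delta}=[b]_{\delta}$, i.e.\ $(a,b)\in\delta$; a second application of canonicity, this time of $\delta$, now gives $a=b$.

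For the converse direction, suppose $\gamma$ is a canonical congruence on $P$ containing $\delta$, and that $([a]_{\delta},[b]_{\delta})\in\epsilon$ with both $[a]_{\delta}\cdot[e]_{\delta}$ and $[b]_{\delta}\cdot[e]_{\delta}$ existing in $Q$ for some $e\in D(P)$. Since $\delta$ is canonical it separates projections, hence is right strong by Proposition \ref{seproj}; so there exist $a_{1}\in[a]_{\delta}$ and $b_{1}\in[b]_{\delta}$ with $a_{1}\cdot e$ and $b_{1}\cdot e$ existing in $P$. The inclusion $\delta\subseteq\gamma$, combined with $(a,b)\in\gamma$ (which is the content of $([a]_{\delta},[b]_{\delta})\in\epsilon$), gives $(a_{1},b_{1})\in\gamma$; canonicity of $\gamma$ then yields $a_{1}=b_{1}$, whence $[a]_{\delta}=[b]_{\delta}$, as required.

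The only real subtlety is in the converse: one cannot assume that the representatives $a,b$ themselves compose with $e$ in $P$, only that some representatives do. This is exactly the point at which right-strongness of $\delta$ — a consequence of its being projection-separating — is needed, so the main ``obstacle'' is really just recognising that Proposition \ref{seproj} must be invoked to lift the composability witnesses from $Q$ back to $P$.
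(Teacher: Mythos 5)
Your proposal is correct and follows essentially the same route as the paper's proof: both directions reduce to checking the second clause of canonicity via Proposition \ref{trans}, with the forward direction applying canonicity of $\epsilon$ and then of $\delta$, and the converse using right-strongness of $\delta$ (via Proposition \ref{seproj}) to lift composability witnesses from $Q$ to $P$ before applying canonicity of $\gamma$. No gaps.
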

\begin{proof}
Recall that $Q$ is a constellation in which $D(Q)=[e]_{\delta}$ for some $e\in D(P)$, by Corollary \ref{sepquot}.

Let $\epsilon$ be a canonical congruence on $Q$.  Define $\gamma$ on $P$ by setting $(s,t)\in\gamma$ if and only if $[s]_{\delta}\ \epsilon\ [t]_{\delta}$, a projection-separating congruence by Proposition \ref{trans}.  Suppose $a\ \gamma\ b$ and there is $e\in D(P)$ for which $a\cdot e$ and $b\cdot e$ both exist.  Thus, $[a]_{\delta}\ \epsilon\ [b]_{\delta}$, and both $[a]_{\delta}\cdot [e]_{\delta}$ and $[b]_{\delta}\cdot [e]_{\delta}$ exist, with $[e]_{\delta}\in D(Q)$. So $[a]_{\delta}=[b]_{\delta}$ since $\epsilon$ is canonical, and so $a=b$ since $\delta$ is.  Hence $\gamma$ is canonical.

Conversely, suppose $\gamma$ is a canonical congruence on $P$, with $\delta\subseteq \gamma$.  Consider the projection-separating congruence $\epsilon$ on $Q$ defined by setting $[s]_{\delta}\ \epsilon\ [t]_{\delta}$ if and only if $s\ \gamma\ t$ (as in Proposition \ref{trans}).  Suppose $[s]_{\delta}\ \epsilon\ [t]_{\delta}$ and $[s]_{\delta}\cdot [e]_{\delta}$ and $[t]_{\delta}\cdot [e]_{\delta}$ exist in $Q$ for some $e\in D(P)$. that is, $[e]_{\delta}\in D(Q)$.  Then $s\ \gamma\ t$ and $s_1\cdot e$ and $t_1\cdot e$ exist for some $s_1\in [s]_{\delta}$ and $t_1\in [t]_{\delta}$ since $\delta$ separates projections and hence is right strong by Proposition \ref{seproj}, so we have $s_1\ \delta\ s\ \gamma\ t\ \delta\ t_1$, and so since $\delta\subseteq \gamma$, we have $s_1\ \gamma\ t_1$.  Since $\gamma$ is canonical, $s_1=t_1$ and so $s\ \delta\ s_1=t_1\ \delta\ t$, so $[s]_{\delta}=[t]_{\delta}$.  So $\epsilon$ is canonical.
\end{proof}

\begin{cor}  \label{maxsimp}
Let $P$ be a constellation with $\delta$ a canonical congruence on it.  Then $P/\delta$ is canonically simple if and only if $\delta$ is maximal.
\end{cor}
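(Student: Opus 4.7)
The plan is to derive the corollary directly from the correspondence set up in Proposition \ref{transcanon}. That proposition gives a bijection between canonical congruences on $Q = P/\delta$ and canonical congruences on $P$ that contain $\delta$, via $[s]_{\delta}\,\epsilon\,[t]_{\delta}$ iff $s\,\gamma\,t$. Under this correspondence, the diagonal relation on $Q$ clearly corresponds to $\delta$ itself on $P$. Both directions of the corollary amount to tracking what ``trivial'' means on each side of this correspondence.

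For the forward direction, I would assume $P/\delta$ is canonically simple and take any canonical congruence $\gamma$ on $P$ with $\delta \subseteq \gamma$. By Proposition \ref{transcanon}, $\gamma$ arises from a canonical congruence $\epsilon$ on $Q$, namely $[s]_{\delta}\,\epsilon\,[t]_{\delta}$ iff $s\,\gamma\,t$. Canonical simplicity of $Q$ forces $\epsilon$ to be the diagonal, so $s\,\gamma\,t$ implies $[s]_{\delta} = [t]_{\delta}$, i.e.\ $s\,\delta\,t$. Hence $\gamma = \delta$, and $\delta$ is maximal.

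For the reverse direction, I would assume $\delta$ is maximal and take any canonical congruence $\epsilon$ on $Q$. Proposition \ref{transcanon} produces a canonical congruence $\gamma$ on $P$ with $\delta\subseteq\gamma$ and $[s]_{\delta}\,\epsilon\,[t]_{\delta}$ iff $s\,\gamma\,t$. Maximality of $\delta$ forces $\gamma = \delta$, so $[s]_{\delta}\,\epsilon\,[t]_{\delta}$ iff $s\,\delta\,t$ iff $[s]_{\delta} = [t]_{\delta}$; that is, $\epsilon$ is the diagonal on $Q$. Thus $Q = P/\delta$ is canonically simple.

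There is no real obstacle here beyond correctly invoking Proposition \ref{transcanon} in both directions; the only care needed is to ensure one uses the explicit description of the correspondence (so that ``diagonal on $Q$'' really does match ``$\gamma = \delta$ on $P$'') rather than just its abstract existence. No further appeal to the definitions of canonical congruence or of projection-separation is needed, since those have already been absorbed into Proposition \ref{transcanon}.
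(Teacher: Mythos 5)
Your proposal is correct and follows exactly the route the paper intends: the corollary is stated without proof immediately after Proposition \ref{transcanon}, precisely because the two directions of that correspondence (with the diagonal on $P/\delta$ matching $\delta$ itself on $P$) yield the equivalence as you describe.
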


\begin{cor}
Every constellation has a canonically simple canonical quotient.
\end{cor}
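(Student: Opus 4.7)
The plan is to combine the two results immediately preceding this corollary. By Proposition \ref{maxcanon}, every constellation $P$ admits at least one maximal canonical congruence $\delta$; the existence proof there uses Zorn's Lemma applied to the chain-closed collection of canonical congruences, starting from the diagonal relation $\triangle$ (which is canonical on any constellation).

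Given such a maximal $\delta$, I would invoke Corollary \ref{maxsimp}, which states that a canonical quotient $P/\delta$ is canonically simple precisely when $\delta$ is maximal. Since $\delta$ is a canonical congruence, Proposition \ref{canonquot} guarantees that $P/\delta$ is indeed a constellation (and is composable if $P$ is), so it makes sense to speak of its canonical congruences. Maximality of $\delta$ then forces $P/\delta$ to have only the diagonal relation as a canonical congruence, i.e.\ it is canonically simple.

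There is essentially no obstacle here; the corollary is a one-line consequence of Proposition \ref{maxcanon} together with Corollary \ref{maxsimp}. The only subtle point worth noting in the write-up is that the correspondence between canonical congruences on $P$ containing $\delta$ and canonical congruences on $P/\delta$ established in Proposition \ref{transcanon} is what makes Corollary \ref{maxsimp} work in the first place, so if one wishes to make the chain of reasoning fully transparent one could briefly remark that any non-trivial canonical congruence on $P/\delta$ would pull back via Proposition \ref{transcanon} to a canonical congruence on $P$ strictly containing $\delta$, contradicting maximality.
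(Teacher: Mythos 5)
Your proposal is correct and matches the paper's intended argument exactly: the corollary follows immediately by taking a maximal canonical congruence $\delta$ on $P$ (Proposition \ref{maxcanon}) and applying Corollary \ref{maxsimp} to conclude that $P/\delta$ is canonically simple. The paper states this as an unproved corollary precisely because this two-step deduction is the whole proof, and your supplementary remarks about Propositions \ref{canonquot} and \ref{transcanon} correctly identify the results that make the deduction legitimate.
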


\begin{cor}
If $P$ is a composable constellation and $\delta$ a canonical congruence on $P$, then $\C(P)\cong \C(P/\delta)$.
\end{cor}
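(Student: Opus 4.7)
The plan is to reduce this to the already-proved Theorem \ref{canoniso} by lifting $\delta$ from $P$ up to $\C(P)$, using the congruence correspondence in Proposition \ref{transcanon}.

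First I would recall that by Proposition \ref{CPsim}, the relation $\sim$ on $\C(P)$ given by $(a,e)\sim (b,f)$ iff $a=b$ is a canonical congruence, and since $P$ is composable, $\C(P)/{\sim}\cong P$ as constellations. So via this isomorphism, we can regard $\delta$ as a canonical congruence on the quotient $\C(P)/{\sim}$. By the second half of Proposition \ref{transcanon}, applied with the category $K=\C(P)$ in place of the constellation $P$ there and with $\sim$ playing the role of the smaller canonical congruence, there is a canonical congruence $\gamma$ on $\C(P)$ with ${\sim}\subseteq \gamma$ such that $\C(P)/\gamma\cong (\C(P)/{\sim})/\delta \cong P/\delta$ as constellations.

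Next I would apply Theorem \ref{canoniso} to the category $\C(P)$ together with the canonical congruence $\gamma$: this yields $\C(P)\cong \C(\C(P)/\gamma)$. Combining this with the isomorphism $\C(P)/\gamma \cong P/\delta$ from the previous step, and noting that the canonical extension construction is functorial on isomorphisms (any constellation isomorphism $f$ gives a categorical isomorphism $F_f$ by Proposition \ref{canonfunc}, with inverse $F_{f^{-1}}$), we obtain $\C(\C(P)/\gamma)\cong \C(P/\delta)$, and hence $\C(P)\cong \C(P/\delta)$ as required.

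The only real obstacle is a small bookkeeping step: verifying that Proposition \ref{transcanon} applies cleanly here. Strictly speaking, that proposition is stated for a constellation with a canonical congruence $\delta$ on it and then further canonical congruences on the quotient; I am applying it in the reverse direction, taking a canonical congruence on the constellation $P\cong \C(P)/{\sim}$ and pulling it back to $\C(P)$. This is exactly the second (converse) direction of Proposition \ref{transcanon}, so no new work is needed. All other ingredients (composability of $P/\delta$ via Proposition \ref{canonquot}, and functoriality of $\C(-)$) are either explicit in the paper or follow immediately from Proposition \ref{canonfunc}.
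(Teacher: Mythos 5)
Your proof is correct and follows essentially the same route as the paper's: realise $P/\delta$ as a canonical quotient of $\C(P)$ via Proposition \ref{CPsim} and the congruence correspondence of Proposition \ref{transcanon}, then apply Theorem \ref{canoniso} (together with the fact that $\C(-)$ preserves isomorphisms). One small mislabelling: the direction of Proposition \ref{transcanon} you actually need --- lifting a canonical congruence on the quotient $\C(P)/{\sim}$ back up to a canonical congruence $\gamma$ on $\C(P)$ containing $\sim$ --- is its \emph{first} statement rather than the ``converse'' second one, but that direction is indeed available, so nothing is missing.
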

\begin{proof}
If $\delta$ is a canonical congruence on $P$ then since $P$ is a canonical quotient of $\C(P)$ by Proposition \ref{CPsim}, $P/\delta$ is a canonical quotient of $\C(P)$ by Proposition \ref{transcanon}, and so $\C(P)\cong C(P/\delta)$ by Theorem \ref{canoniso}.
\end{proof}

In particular, given a category $K$ of interest, we seek a canonically simple canonical quotient $P$ of $K$, since such $P$ is as ``simple" as possible with the property that $K\cong \C(P)$.

\subsection{Some examples}

\begin{eg} The concrete constellations of Section \ref{examples} revisited. \end{eg}

Recall Example \ref{eg:CSET}: in the category $SET$, if $f:A\rightarrow B$ and $g:A\rightarrow C$, define $(f,g)\in \delta$ if and only if Im$(f)=$ Im$(g)$, and $xf=xg$ for all $x\in A$.  Clearly $\delta$ is an equivalence relation, and the three conditions in Proposition \ref{canonsimp} are easily seen to be satisfied, so $\delta$ is a canonical congruence on $K$ by that result.  In fact each $\delta$-class has a unique representative having smallest possible range: $f:A\rightarrow B$ is $\delta$-related to $\bar{f}:A\rightarrow $ Im$(f)$, defined to agree with $f$ on their common domain but to have codomain equal to the image of $f$.  This is a typical surjective function in the category, and indeed a typical element of $CSET$, as we have seen.  Now it is easy to see that for $f,g\in SET$, $[f]\cdot[g]$ exists in $SET/\delta$ if and only if 
Im$(\bar{f})\subseteq$ Dom$(g)=$ Dom$(\bar{g})$, if and only if $\bar{f}\cdot \bar{g}$ exists in $CSET$, and then equals $[h]$ where $\bar{h}=\bar{f}\cdot\bar{g}$ as computed in $CSET$.  It follows that $SET/\delta\cong CSET$ via the isomorphism $[s]\mapsto \bar{s}$, and so $SET\cong \C(CSET)$ as categories.  (As noted earlier, $CSET$ is canonically simple.)

A simplified version of this argument shows that for any set $X$, defining $\delta$ as for $SET$, $COD_X/\delta\cong \C_X$, and so $COD_X\cong \C(\C_X)$ as categories.  (As noted earlier, $\C_X$ is canonically simple since it has a global right identity element.)  It is of interest that for finite $X$ having $n$ elements, $COD_X$ generally has $\sum_{m=0}^n {n\choose m}(m+1)^n$ elements, as can be seen by adding up the number of partial functions having domain of size $n$ and range varying from empty to size $n$, while $\C_X$ has only $(n+1)^n$.   For $n=1,2,3$, these two numbers are $18$ and $9$, $170$ and $64$, and $2200$ and $625$ respectively, and the ratio of the latter to the former tends to zero as $n$ increases.  

A similar argument can be given for the category of groups: one can define $\delta$ analogously, noting that Im$(f)$ is itself a group and so $\bar{f}$ as defined above is a group homomorphism, the rest of the argument proceeding as above for $SET$, leading to the conclusion that $GRP\cong \C(CGRP)$ as categories.  The same line of argument applies to the categories of rings, modules over a ring, semigroups and so on.

In fact the same relationship exists between $TOP$ and $CTOP$.  This is because every  continuous function $f:X\rightarrow Y$ is determined by the continuous function $\bar{f}:X\rightarrow$ Im$(f)$ (where Im$(f)$ is the subset of $Y$ consisting of the image of $f$ equipped with the subspace topology), as above: indeed $f$ is the composite of $\bar{f}$ with the obvious embedding of Im$(f)$ into $Y$.  As a result, we may define a canonical congruence $\delta$ as for $SET$ and $GRP$, with each $\delta$-class in $TOP$ containing a unique element of $CTOP$, and indeed $CTOP\cong TOP/\delta$, so that $TOP\cong \C(CTOP)$.  Similarly for many other familiar concrete categories.

For the category $SET^{\infty}$, we may define $\delta$ as for $SET$.  Not every $\delta$-class has its canonical representative within $SET^{\infty}$, but the restriction of $\delta$ to $SET^{\infty}$ is easily seen to be a canonical congruence on it (in general the restriction of a canonical congruence on a category $K$ to a subcategory $L$ is a canonical congruence on $L$).  Then each $[s]\in SET^{\infty}/\delta$ may have associated with it the surjective map $\bar{s}\in CSET$ defined as for $SET$.  So since the two subconstellations $SET^{\infty}/\delta$ of $SET/\delta$ and $CSET^{\infty}$ of $CSET$ correspond under the earlier isomorphism $[s]\mapsto \bar{s}$ between $SET/\delta$ and $CSET$, $SET^{\infty}/\delta\cong CSET^{\infty}$, and so $SET^{\infty}\cong \C(CSET^{\infty})$.  Similarly for $GRP^{\infty}$ and $CGRP^{\infty}$, and so on.  

\begin{eg} A six-element category. \end{eg}

Consider the category with six elements $K=\{e,f,g,a,b,c\}$, in which $D(K)=\{e,f,g\}$, and $D(a)=D(b)=D(c)=e$, $R(a)=R(b)=f$ and $R(c)=g$.  The partial multiplication table for $K$ is as follows:

$$
\begin{array}{c|cccccc}
\cdot&b&c&e&a&f&g\\
\hline
b&-&-&-&-&b&-\\
c&-&-&-&-&-&c\\
e&b&c&e&a&-&-\\
a&-&-&-&-&a&-\\
f&-&-&-&-&f&-\\
g&-&-&-&-&-&g
\end{array}
$$

It is easy to check from Corollary \ref{canonsimpcat} and the above that the equivalence relations $\delta_1$ and $\delta_2$ of $K$ giving the following partitions are canonical congruences:
$$\{b,c,e\},\ \{a\},\ \{f\},\ \{g\}\mbox{ and }\{b,c\},\ \{e,a\},\ \{f\},\ \{g\}.$$
Indeed both are maximal since $e,f,g$ must be in distinct classes (since canonical congruences separate projections), and $a,b$ cannot be related by a canonical congruence since their ranges are equal, so not all three can be put in a class with $e$ (the only projection they could be grouped with since domains of congruent elements must agree).  So the poset of canonical congruences of a constellation (or even a category) need not be a lattice. 

The constellations $K/\delta_1$ and $K/\delta_2$ are canonically simple by Corollary \ref{maxsimp}, and  have partial multiplication tables as follows (writing singleton sets as the element they contain):

$$
\begin{array}{c|cccc}
\cdot&[e]&a&f&g\\
\hline
[e]&[e]&a&[e]&[e]\\
a&-&-&a&-\\
f&-&-&f&-\\
g&-&-&-&g
\end{array}\
\mbox{ and }\
\begin{array}{c|cccc}
\cdot&[b]&[e]&f&g\\
\hline
[b]&-&-&[b]&[b]\\[0pt]
[e]&[b]&[e]&[e]&-\\
f&-&-&f&-\\
g&-&-&-&g
\end{array}.
$$
\medskip

Then $D(K/\delta_1)=\{[e],f,g\}$ with $[e]\lesssim f,g$, while $D(K/\delta_2)=\{[e],f,g\}$ with $[e]\lesssim f$ only, so these constellations are not isomorphic.  
Note that $\C(K/\delta_1)=\{([e],f), ([e],g),([e],[e]), (a,f),(f,f),(g,g)\}$, and under the isomorphism of Theorem \ref{canoniso}, $b\mapsto ([e],f), c\mapsto ([e],g), e\mapsto  ([e],[e]), a\mapsto (a,f), f\mapsto (f,f)$, and $g\mapsto (g,g)$, 

\begin{eg} The category of surjective functions on a two-element set. \label{surjeg} 
\end{eg}

The maximal canonical congruences in the previous example both yield normal constellation quotients.  Now let $X$ be a set and consider the subcategory ${\mc S}_X$ of $COD_X$ consisting of the {\em surjective} non-empty cod-functions on $X$ (so the elements are the same as those in $\C_X$ except that a codomain determined by its range is defined for every element, the empty function is absent, and multiplication is more restricted).  It is easy to see that $\delta$ as defined earlier on ${\mc P}_X$ is the diagonal relation when restricted to ${\mc S}_X$.  However, ${\mc S}_X$ is not canonically simple.  For example, let $X=\{a,b\}$, so that ${\mc S}_X$ has eight elements (the eight non-empty partial maps on $X$), namely
$$\{1,f_a,f_b,i,1_a,ab,1_b,ba\},$$
%where $1$ is the identity map on $X$, $f_a(x)=a$ for all $x\in X$, $f_b(x)=b$ for all $x\in X$, $i(a)=b$ and $i(b)=a$, $1_a$ is the identity map on $\{a\}$, $1_b$ the identity map in $\{b\}$, $ab$ maps $a$ to $b$ and is undefined otherwise, and $ba$ maps $b$ to $a$ an is undefined otherwise.
where we have $1=\{(a,a),(b,b)\}$, $f_a=\{(a,a),(b,a)\}$, $f_b=\{(a,b),(b,b)\}$, $i=\{(a,b),(b,a)\}$, $ab=\{(a,b)\}$, $ba=\{(b,a)\}$, $1_a=\{(a,a)\}$, and $1_b=\{(b,b)\}$.  $D$ is defined in the obvious way and $R$ in terms of images, and the partial multiplication table is as follows:

$$
\begin{array}{c|ccccccccc}
\cdot&1&f_a&f_b&i&1_a&ab&1_b&ba\\
\hline
1&1&f_a&f_b&i&-&-&-&-\\
f_a&-&-&-&-&f_a&f_b&-&-\\
f_b&-&-&-&-&-&-&f_b&f_a\\
i&i&f_a&f_b&1&-&-&-&-\\
1_a&-&-&-&-&1_a&ab&-&-\\
ab&-&-&-&-&-&-&ab&1_a\\
1_b&-&-&-&-&-&-&1_b&ba\\
ba&-&-&-&-&ba&1_b&-&-\\
\end{array}
$$

By Proposition \ref{maxprojsep}, the largest projection-separating congruence on ${\mc S}_X$ for any finite $X$ has, for all $e,f\in D({\mc S}_X)$, $[e]\lesssim [f]$ if and only if there exist $a,b\in {\mc S}_X$ for which $D(a)=e$, $D(b)=f$ and $a\cdot b$ exists in ${\mc S}_X$, that is, $a\cdot f$ exists in ${\mc S}_X$.  This says that the subset of $X$ corresponding to $e$ has at least as many elements as that corresponding to $f$.  So in the current case, $[1]$ is below both $[1_a]$ and $[1_b]$.

The equivalence relation $\gamma$ that gives rise to the following partition of ${\mc S}_X$ is easily checked to be a canonical congruence using Proposition \ref{canonsimpcat}, and indeed is the largest canonical congruence on it:
$$\{1\}, f=\{f_a,f_b\}, \{i\}, [1_a]=\{1_a,ab\}, [1_b]=\{1_b,ba\}.$$
So again identifying singletons with their unique elements, the constellation $P={\mc S}_X/\gamma=\{1,f,i,[1_a],[1_b]\}$ is canonically simple, and has multiplication table as follows.

$$
\begin{array}{c|ccccc}
\cdot&1&[1_a]&[1_b]&f&i\\
\hline
1&1&-&-&f&i\\[0pt]
[1_a]&-&[1_a]&[1_a]&-&-\\[0pt]
[1_b]&-&[1_b]&[1_b]&-&-\\
f&-&f&f&-&-\\
i&i&-&-&f&1
\end{array}
$$
From this it can be seen that $D(P)=\{1,[1_a],[1_b]\}$ is the set of right identities in $P$ (corresponding as it must to $\{1,1_a,1_b\}$ in ${\mc S}_X$), and we have that $[1_a]\approx [1_b]$ under the standard quasiorder on $D(P)$, so $P$ is not normal. 

The equivalence relation in which $\{f_a,f_b\}$ is a class and all other partition classes are singletons is the unique non-trivial canonical congruence on ${\mc S}_X$ having normal quotient, and for this quotient, the standard quasiorder on the projections is flat.  (In general, one can prove using Zorn's Lemma that a maximal canonical congruence with normal quotient always exists.)  So in particular, there is no constellation quotient $P$ of ${\mc S}_X$ from which the obvious substructure partial order on the objects of ${\mc S}_X$ derives (that is, $1_a\lesssim 1$, $1_b\lesssim 1$), reflecting the fact that there is no suitable monic in ${\mc S}_X$ having domain $1_a$ and codomain $1$.  (Of course this partial order on the projections {\em is} present in the structure of $\C_X$.)

\subsection{$\delta$-categories and composable constellations}

As we have seen, information about substructure relationships can be lost in passing from $P$ to $\C(P)$, reflecting the fact that $P$ is not determined by the category $\C(P)$.  Both these issues can be dealt with by specifying the canonical congruence $\sim$ for which $\C(P)/{\sim}\cong P$.

Let us say that the pair $(K,\delta)$, consisting of a category $K$ equipped with a particular canonical congruence $\delta$ on it, is a {\em $\delta$-category}.  For a $\delta$-category $(K,\delta)$, we often just write $K$ if $\delta$ is understood.  In particular, for $P$ a constellation, the category $\C(P)$ is to be viewed as a $\delta$-category in which the canonical congruence is $\sim$ as in Proposition \ref{CPsim}, unless stated otherwise.

A {\em $\delta$-functor} $\psi$ from the $\delta$-category $(K_1,\delta_1)$ to the $\delta$-category $(K_2,\delta_2)$ is a functor from $K_1$ to $K_2$ which additionally respects the distinguished canonical congruences: if $(f,g)\in \delta_1$, then $(f\psi,g\psi)\in \delta_2$.  In this way, the class of $\delta$-categories is itself a category in which the arrows are $\delta$-functors.  An isomorphism of $\delta$-categories is therefore a $\delta$-functor with a $\delta$-functor inverse, so $\psi:(K_1,\delta_1)\rightarrow (K_2,\delta_2)$ is an isomorphism of $\delta$-categories if and only if is a category isomorphism $K_1\rightarrow K_2$ for which $(s,t)\in \delta_1$ if and only if $(s\psi,t\psi)\in \delta_2$.  

We have the following easy consequence of Theorem \ref{canoniso}.

\begin{cor}  \label{corcanon}
Let $(K,\delta)$ be a $\delta$-category.  Then $(K,\delta)\cong (\C(K/\delta),\sim)$ as $\delta$-categories.
\end{cor}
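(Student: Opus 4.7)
The plan is to reuse the category isomorphism $\psi:K\to \C(K/\delta)$ supplied by Theorem \ref{canoniso}, namely $s\psi=([s],[R(s)])$, and verify that it transports $\delta$ to $\sim$ on the nose, so that the same map witnesses an isomorphism of $\delta$-categories.

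First, I would record that $\psi$ is already a category isomorphism by Theorem \ref{canoniso}, so the only remaining content is the biconditional
\[
(s,t)\in\delta \iff (s\psi,t\psi)\in{\sim}.
\]
For the forward direction, suppose $(s,t)\in\delta$. Then by definition $[s]=[t]$ as elements of $K/\delta$, so $s\psi=([s],[R(s)])$ and $t\psi=([t],[R(t)])$ agree in their first coordinate, and hence $(s\psi,t\psi)\in{\sim}$ by the definition of $\sim$ on $\C(K/\delta)$ given in Proposition \ref{CPsim}. For the converse, if $(s\psi,t\psi)\in{\sim}$ then their first coordinates match, i.e.\ $[s]=[t]$, which is exactly $(s,t)\in\delta$.

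Since this biconditional is symmetric in $\psi$ and $\psi^{-1}$, it follows that $\psi$ is a $\delta$-functor and that $\psi^{-1}$ is too, so $\psi$ is an isomorphism of $\delta$-categories. I do not anticipate any real obstacle: all the work has already been done in Theorem \ref{canoniso} (to obtain the underlying category isomorphism) and in Proposition \ref{CPsim} (to describe $\sim$), and the compatibility with the two distinguished congruences is essentially the defining property of $\sim$ on $\C(K/\delta)$ unwound through $\psi$. The proof should therefore be little more than a one-line invocation of Theorem \ref{canoniso} together with the observation above.
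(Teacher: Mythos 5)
Your proposal is correct and matches the paper's own proof essentially verbatim: both invoke Theorem \ref{canoniso} for the underlying category isomorphism $\psi:s\mapsto([s],[R(s)])$ and then observe the chain of equivalences $s\ \delta\ t \iff [s]=[t] \iff s\psi\sim t\psi$. No gaps.
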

\begin{proof} By Theorem \ref{canoniso}, $\psi:K\rightarrow \C(K/\delta)$ given by $s\psi=([s],[R(s)])$ is a constellation (hence category) isomorphism.  Also, for all $a,b\in K$, the following are equivalent: $a\ \delta\ b$; $[a]=[b]$; $([a],[R(a)])\sim ([b],[R(b)])$; $a\psi\sim b\psi$.  So $\psi$ is a $\delta$-category isomorphism.
\end{proof}

From the above and Proposition \ref{CPsim}, we obtain the following.

\begin{cor}  \label{main}
The maps under which $P\mapsto\C(P)$ and $K\mapsto K/\delta$, where $P$ is a composable constellation and $K$ is a $\delta$-category, are mutually inverse up to isomorphism: $\C(P)/{\sim} \cong P$, and $(\C(K/\delta),\sim)\cong (K,\delta)$ as $\delta$-categories.
\end{cor}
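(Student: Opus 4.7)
The plan is to observe that this corollary is essentially a packaging of the two immediately preceding results, so the proof amounts to reading off what has already been established rather than doing new work. First I would note that the first asserted isomorphism, $\C(P)/{\sim}\cong P$ for any composable constellation $P$, is exactly the content of Proposition \ref{CPsim}: there the radiant $\rho:\C(P)\to P$ sending $(s,e)\mapsto s$ was shown to be full and surjective when $P$ is composable, with $\ker(\rho)={\sim}$, so the Homomorphism Theorem for constellations (Proposition \ref{semiquot}) delivers the isomorphism directly.

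Next I would dispatch the second isomorphism $(\C(K/\delta),\sim)\cong (K,\delta)$ by appealing to Corollary \ref{corcanon}, which already verifies that the map $\psi:K\to\C(K/\delta)$ defined by $s\psi=([s],[R(s)])$ is not merely a category isomorphism (via Theorem \ref{canoniso}) but in fact a $\delta$-category isomorphism when $\C(K/\delta)$ is equipped with its distinguished congruence $\sim$. So the only thing left to remark is that the two constructions $P\mapsto\C(P)$ and $K\mapsto K/\delta$ are genuinely inverse to each other in both directions, which is exactly the conjunction of the two statements.

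I do not anticipate any real obstacle: both halves are single-line citations of prior results, and the corollary's role is to restate them side by side as the punchline of the subsection. If any care is needed, it is just to check that the $\delta$-category structure on $\C(P)$ referred to in the statement is indeed the one specified by convention earlier in the subsection (namely, equipped with $\sim$), so that the symbol $\sim$ appearing on both sides of the displayed isomorphism refers to compatible canonical congruences; this is immediate from the convention stated just before Corollary \ref{corcanon}. Accordingly the entire proof can be written in two sentences, each citing one previous result.
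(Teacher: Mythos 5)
Your proposal matches the paper exactly: the corollary is stated there with the single remark ``From the above and Proposition \ref{CPsim}, we obtain the following,'' i.e.\ the first isomorphism is Proposition \ref{CPsim} and the second is Corollary \ref{corcanon}, just as you cite them. No further comment is needed.
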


\begin{thm}  \label{main2}
The categories of composable constellations and $\delta$-categories are equivalent. 
\end{thm}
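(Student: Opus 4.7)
The plan is to exhibit a pair of quasi-inverse functors between the two categories, invoking Corollary \ref{main} for the object-level isomorphisms and verifying naturality. Let $\mathbf{Con}$ denote the category of composable constellations with radiants, and $\boldsymbol{\delta}\mathbf{Cat}$ the category of $\delta$-categories with $\delta$-functors.

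First I would define the functor $\mathcal{F}:\mathbf{Con}\rightarrow \boldsymbol{\delta}\mathbf{Cat}$ by $\mathcal{F}(P)=(\C(P),\sim)$ on objects (which is a $\delta$-category by Proposition \ref{CPsim}), and $\mathcal{F}(\rho)=F_{\rho}$ on arrows, where $F_{\rho}$ is as in Proposition \ref{canonfunc}. That $F_{\rho}$ is a $\delta$-functor is immediate: if $(a,e)\sim (b,f)$ then $a=b$, so $(a\rho,e\rho)$ and $(b\rho,f\rho)$ agree in the first coordinate and hence are $\sim$-related in $\C(Q)$. Functoriality, that is $F_{\rho\sigma}=F_{\rho}F_{\sigma}$ and $F_{\mathrm{id}_P}=\mathrm{id}_{\C(P)}$, is a direct calculation from the formula $(s,e)F_{\rho}=(s\rho,e\rho)$.

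Next I would define $\mathcal{G}:\boldsymbol{\delta}\mathbf{Cat}\rightarrow \mathbf{Con}$ by $\mathcal{G}(K,\delta)=K/\delta$ on objects (a composable constellation by Proposition \ref{canonquot}, since $K$ is composable), and on arrows by sending a $\delta$-functor $\psi:(K_1,\delta_1)\rightarrow (K_2,\delta_2)$ to the map $\mathcal{G}(\psi):K_1/\delta_1\rightarrow K_2/\delta_2$ given by $[s]_{\delta_1}\mapsto [s\psi]_{\delta_2}$. This is well-defined because $\psi$ respects the distinguished congruences. It is a radiant: $\psi$ itself is a radiant of constellation reducts by Proposition \ref{radfunct}, and $\mathcal{G}(\psi)$ is the radiant induced on right-strong quotients (using Propositions \ref{seproj} and \ref{rsquot}) by composing $\psi$ with the natural map. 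Functoriality is again routine.

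Finally I would produce the natural isomorphisms. For $P\in \mathbf{Con}$, take $\eta_P:P\rightarrow \mathcal{G}\mathcal{F}(P)=\C(P)/\sim$ to be the inverse of the isomorphism $[(s,e)]_{\sim}\mapsto s$ of Proposition \ref{CPsim}; explicitly, $\eta_P(s)=[(s,e)]_{\sim}$ for any $e\in D(P)$ with $s\cdot e=s$, which exists by composability and Lemma \ref{2p3} and whose choice is immaterial since $\sim$ identifies all pairs with the same first coordinate. For $(K,\delta)\in\boldsymbol{\delta}\mathbf{Cat}$, take $\epsilon_{(K,\delta)}:\mathcal{F}\mathcal{G}(K,\delta)=(\C(K/\delta),\sim)\rightarrow (K,\delta)$ to be the inverse of the $\delta$-category isomorphism $s\mapsto ([s]_{\delta},[R(s)]_{\delta})$ of Corollary \ref{corcanon}. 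Naturality of $\eta$ is the observation that, for a radiant $\rho:P\rightarrow Q$ and any admissible $e$ for $s$, both $\eta_Q\rho$ and $\mathcal{G}\mathcal{F}(\rho)\eta_P$ send $s$ to $[(s\rho,e\rho)]_{\sim}$; naturality of $\epsilon$ is the dual computation on pairs $([s]_{\delta},[R(s)]_{\delta})$.

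The substantive content is already packaged in Corollary \ref{main} and Corollary \ref{corcanon}, so I do not expect any genuine obstacle; the naturality squares commute by unfolding the definitions of $F_{\rho}$ and $\mathcal{G}(\psi)$. The only subtle point is that $\eta_P$ requires a choice of witness $e$ for each $s$, but this choice is absorbed by the definition of $\sim$, so $\eta_P$ is canonically defined.
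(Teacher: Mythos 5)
Your proposal is correct and follows essentially the same route as the paper: the same pair of functors $P\mapsto(\C(P),\sim)$, $\rho\mapsto F_{\rho}$ and $(K,\delta)\mapsto K/\delta$, $\psi\mapsto([s]\mapsto[s\psi])$, with the object-level isomorphisms supplied by Proposition \ref{CPsim} and Corollary \ref{corcanon}. If anything you are slightly more careful than the paper, which only verifies that the composites are isomorphic to the identity ``when acting on objects,'' whereas you exhibit the natural isomorphisms $\eta$ and $\epsilon$ explicitly and sketch the commuting naturality squares.
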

\begin{proof}
Let $\phi$ be the mapping that takes the radiant $\rho:P\rightarrow Q$ in the category of constellations to the functor $F_{\rho}$ in the category of categories, defined in Proposition \ref{canonfunc} (where it is shown to be a functor); in particular, the object $P$ (represented by the identity radiant on $P$) is mapped to its canonical extension category $\C(P)$ (again, represented by the identity functor).  In fact $F_{\rho}$ is a $\delta$-functor, since if $(s,e)\ \sim\ (t,f)$ in $\C(P)$, then $s=t$ so $s\rho=t\rho$, and so $(s\rho,e\rho)\ \sim\ (t\rho,f\rho)$ in $\C(Q)$.

Conversely, let $\psi$ be the mapping that takes the $\delta$-functor $f:K\rightarrow L$ in the category of $\delta$-categories to the mapping $\rho_f:K/\delta\rightarrow L/\delta$ given by $[s]\rho_f=[sf]$ (where $[s]$ denotes the $\delta$-class containing $s$).  This is well-defined since $f$ respects $\delta$.  Suppose $[s]\cdot [t]$ exists in $K/\delta$.  Then $s_1\cdot t$ exists for some $s_1\in [s]$ since canonical congruences are projection-separating and hence right strong by Proposition \ref{seproj}.
So $s_1f\cdot tf$ exists, and since $(sf,s_1f)\in \delta$, we have that $[sf]\cdot [tf]$ exists, and so $([s]\rho_f)\cdot ([t]\rho_f)$ exists.  Now
$$([s]\cdot[t])\rho_f=([s_1]\cdot[t])\rho_f=[(s_1\cdot t)f]=[s_1f\cdot tf]=[sf]\cdot [tf]=([s]\rho_f)\cdot([t]\rho_f)$$ 
since $(sf,s_1f)\in \delta$.  Moreover, $(D([s]))\rho_f=[D(s)f]=[D(sf)]=D([sf])=D([s]\rho_f)$.  So $\rho_f$ is a radiant.

We next show that each of $\phi$ and $\psi$ is a functor, beginning with $\phi$.  Suppose $\rho:P\rightarrow Q$ and $\tau:Q\rightarrow R$ are radiants between constellations $P,Q,R$.  Then $F_{\rho}:\C(P)\rightarrow \C(Q)$ and  $F_{\tau}: \C(Q)\rightarrow \C(R)$ are functors with composite $F_{\rho}F_{\tau}:\C(P)\rightarrow \C(R)$, and 
\begin{align*}
(s,e)(F_{\rho}F_{\tau})&=(s,e)F_{\rho}F_{\tau}\\
&=(s\rho,e\rho)F_{\tau}\\
&=(s\rho\tau,e\rho\tau)\\
&=(s(\rho\tau),e(\rho\tau))\\
&=(s,e)F_{\rho\tau}.
\end{align*}
Also, $F_{D(\rho)}:\C(P)\rightarrow \C(P)$ (since $D(\rho):P\rightarrow P$), and 
$$(s,e)F_{D(\rho)}=(sD(\rho),eD(\rho))=(s,e),$$
so $F_{D(\rho)}$ is the identity map on $\C(P)$, the domain of $F_{\rho}$, which is precisely what $D(F_{\rho}):\C(P)\rightarrow \C(P)$ is.   So $\phi$ is a radiant between two categories, hence a functor by Proposition \ref{radfunct}.  

Now suppose $(K,\delta_1),(L,\delta_2),(M,\delta_3)$ are $\delta$-categories, with $f:K\rightarrow L$ and $g:L\rightarrow M$ both $\delta$-functors.  Then $\rho_{f}:K/\delta_1\rightarrow L/\delta_2$ and $\rho_{g}:L/\delta_2\rightarrow M/\delta_3$ are radiants with composition $\rho_{f}\rho_{g}:K/\delta_1\rightarrow M/\delta_3$.  Then for each $[s]\in K/\delta_1$, 
$$[s](\rho_{f}\rho_{g})=[s]\rho_{f}\rho_{g}=[sf]\rho_{g}=[sfg]
=[s(fg)]=[s]\rho_{fg},$$
and $\rho_{D(f)}:K/\delta_1\rightarrow K/\delta_1$ (since $f:K/\delta_1\rightarrow K/\delta_2$) is such that for all $[s]\in K/\delta_1$, $[s]\rho_{D(f)}=[sD(f)]=[s]$, so 
$\rho_{D(f)}$ is nothing but the identity map on the domain of $\rho_{f}$, which is precisely what $D(\rho_{f})$ is.  So $\psi$ is a radiant between categories, hence a functor.

It remains to check that $\phi,\psi$ are mutually inverse up to isomorphism (when acting on objects).  But for a composable constellation $P$, $P\phi\psi=\C(P)\psi\cong P$, while for any $\delta$-category $(K,\delta)$, we have $(K,\delta)\psi\phi=\C(K/\delta)\phi=(\C(K/\delta),\sim)\cong (K,\delta)$.
\end{proof}

Recall that $SET/\delta\cong CSET$ via the isomorphism $[s]\mapsto \bar{s}$, and so $SET\cong \C(CSET)$ as categories, and indeed by the above, $(SET,\delta)\cong (\C(CSET),\delta)$ as $\delta$-categories.  Likewise, $COD_X/\delta\cong \C_X$, and so $COD_X\cong \C(\C_X)$ as $\delta$-categories.  Similarly for the other cases: $GRP\cong \C(CGRP)$, $SET^{\infty}\cong \C(CSET^{\infty})$, $GRP^{\infty}\cong \C(CGRP^{\infty})$, and so on, not just as categories but as $\delta$-categories.

Because of the correspondence between composable constellations and $\delta$-categories via canonical extensions, the substructure notion present in a constellation is also present in a $\delta$-category: for $e,f\in D(K)$, $e$ is a substructure of $f$ if and only if there exists $s\in K$ for which $(e,s)\in\delta$ and $R(s)=f$.  (This is because of Theorem \ref{canoniso}: in the case of $\C(P)$ where $P$ is a constellation, $s=(e,f)$ (where $e,f\in D(P)$ are such that $e\cdot f$ exists) is the unique element $x$ of $\C(P)$ such that $x\sim (e,e)$ and $R(x)=(f,f)$, and exists if and only if $(e,e)\leq (f,f)$.)  Example \ref{surjeg} shows that some categories admit no $\delta$-category structure capable of capturing the natural substructure partial order on their objects.

However, a notion of substructure is not always sufficient to specify the distinguished canonical congruence on a $\delta$-category.  For example, consider the constellation $P=\{s,e,f,g\}$, in which $D(P)=\{e,f,g\}$ with $D(s)=g$ and $s\cdot e,s\cdot f$ both existing but no other products existing aside from those that must.  (For example, let $X=\{1,2,3\}$ with $P$ the subconstellation of $\C_X$ in which $s=\{(1,2),(3,2)\}$, $e=\{(1,1),(2,2)\}$, $f=\{(2,2),(3,3)\}$ and $g=\{(1,1),(3,3)\}$.) Then $K=\C(P)=\{(s,e),(s,f),(e,e),(f,f),(g,g)\}$ is a $\delta$-category with respect to $\sim$, in which $D(K)=\{(e,e),(f,f),(g,g)\}$, and there are four $\sim$-classes, namely $\{(s,e),(s,f)\}$, $\{(e,e)\}$, $\{(f,f)\}$ and $\{(g,g)\}$.  However, it supports another canonical congruence $\triangle$, in which the $\delta$-classes are just the singletons.  Precisely the same notion of substructure is determined by this latter choice of canonical congruence.

\section{Open questions}

For a number of familiar concrete categories $K$, we have described natural associated constellations $P$, often consisting of the surjective morphisms in $K$ but equipped with the more liberal constellation product, and $P$ has turned out to be a canonical quotient of $K$, since $K\cong \C(P)$.  In some cases such as $CSET$, $P$ is canonically simple, and so the representation of $K$ as $\C(P)$ is ``best possible".  Others such as $CGRP$ are not canonically simple.  There is interest in determining the canonical simplicity status of other such constellations, for example $CTOP$, as well is in explicitly finding a maximal canonical congruence $\delta$ on $P$ (and hence on $K\cong \C(P)$) when $P$ is not canonically simple, so as to obtain a ``best possible" representation of $K$ as $\C(P)$.

The main observation of this paper is that category theory as it applies to the familiar concrete categories of modern mathematics (which come equipped with natural notions of substructures and indeed are $\delta$-categories) may be subsumed by constellation theory.  There is therefore interest in examining how many of the big applications of category theory to mathematical topics such as topology actually appear when re-cast in the more parsimonious form of constellations.  Notions such as natural transformations, left and right adjoint functors, equalizers and so forth all need to be formulated in the setting of constellations.  Indeed even Theorem \ref{main2} itself may have a stronger formulation in terms of an equivalence of suitably defined constellations.

\end{document}